\newcommand {\abs}[1]{\lvert#1\rvert}
\newcommand {\Be}{{B}}
\newcommand {\C}{{\mathbb C}}
\newcommand {\Ce}{\mathrm{C}}
\newcommand {\ud}{\mathrm{d}}
\newcommand {\ue}{\mathrm{e}}
\newcommand {\eps}{\varepsilon}
\newcommand {\Ell}{L}
\newcommand {\Ellp}{L^{p}}
\newcommand {\Ellpprime}{L^{p'}}
\newcommand {\Ellq}{L^{q}}
\newcommand {\Ellone}{L^{1}}
\newcommand {\Elltwo}{L^{2}}
\newcommand {\Ellinfty}{L^{\infty}}
\newcommand {\Ellinftyc}{L_{\mathrm{c}}^{\infty}}
\newcommand {\F}{{\mathcal{F}}}
\newcommand {\ui}{\mathrm{i}}
\newcommand {\ind}{{\mathbf{1}}}
\newcommand{\lb}{\langle}
\newcommand{\rb}{\rangle}
\newcommand {\La}{{\mathcal{L}}}
\newcommand {\calL}{{\mathcal{L}}}
\newcommand {\N}{{{\mathbb N}}}
\newcommand {\norm}[1]{\left\|#1\right\|}
\newcommand{\one}{{{\bf 1}}}
\newcommand {\ph}{{\varphi}}
\newcommand {\Pa}{{\mathcal{P}}}
\newcommand {\R}{{\mathbb R}}
\newcommand {\Rd}{{\mathbb{R}^{d}}}
\newcommand {\supp}{{\mathrm{supp}}}
\newcommand {\Sw}{\mathcal{S}}
\newcommand {\T}{{\mathbb T}}
\newcommand {\Z}{{{\mathbb Z}}}
\newcommand {\vanish}[1]{\relax}
\newcommand{\wh}{\widehat}
\renewcommand{\restriction}{\mathord{\upharpoonright}}
\newtheorem{theorem}{Theorem}[section]
\newtheorem{lemma}[theorem]{Lemma}
\newtheorem{proposition}[theorem]{Proposition}
\newtheorem{corollary}[theorem]{Corollary}
\theoremstyle{definition}
\newtheorem{remark}[theorem]{Remark}
\numberwithin{equation}{section}
\title[Fourier multiplier theorems on Besov spaces]{Fourier multiplier theorems on Besov spaces under type and cotype conditions}
\author{Jan Rozendaal}
\address{Institute of Mathematics Polish Academy of Sciences\\
ul.~\'{S}niadeckich 8\\
00-656 Warsaw\\
Poland}
\email{janrozendaalmath@gmail.com}
\author{Mark Veraar}
\address{Delft Institute of Applied
Mathematics\\
Delft University of Technology\\
P.O.~Box 5031\\
2628 CD Delft\\
The Netherlands}
\email{M.C.Veraar@tudelft.nl}
\subjclass[2010]{Primary: 42B15; Secondary: 42B35, 46B20, 46E40, 47B38}
\keywords{Operator-valued Fourier multipliers, Besov spaces, type and cotype, Fourier type, extrapolation}
\thanks{The second author is supported by the VIDI subsidy 639.032.427 of the Netherlands Organisation for Scientific Research (NWO)}
\begin{document}
\begin{abstract}
In this paper we consider Fourier multiplier operators between vector-valued Besov spaces with different integrability exponents $p$ and $q$, which depend on the type $p$ and cotype $q$ of the underlying Banach spaces. In a previous paper we considered $L^p$-$L^q$-multiplier theorems. In the current paper we show that in the Besov scale one can obtain results with optimal integrability exponents. Moreover, we derive a sharp result in the $L^p$-$L^q$-setting as well.

We consider operator-valued multipliers without smoothness assumptions. The results are based on a Fourier multiplier theorem for functions with compact Fourier support. If the multiplier has smoothness properties then the boundedness of the multiplier operator extrapolates to other values of $p$ and $q$ for which $\frac1p - \frac1q$ remains constant.
\end{abstract} \maketitle

\section{Introduction}

In this paper we consider Fourier multiplier operators $T_{m}(f)  = \F^{-1} (m \F f)$ on vector-valued Besov spaces. Here $\F$ denotes the Fourier transform and $m$ is an operator-valued function on $\Rd$. In \cite{Rozendaal-Veraar16Fourier} we considered such operators on vector-valued $L^p$-spaces. The advantage of the Besov scale is that boundedness of the Fourier multiplier operator can be obtained with optimal integrability exponents $p$ and $q$, where $p$ is the type of $X$ and $q$ the cotype of $Y$.

In the case $p=q$, Fourier multiplier operators on vector-valued Besov spaces have been considered in \cite{Girardi-Weis03, Hytonen04} and in \cite{Arendt-Bu-period} in the periodic setting. In both papers it is shown that under Fourier type assumptions on $X$, one can obtain boundedness results under less restrictive smoothness conditions on the multipliers than in the $L^{p}$-scale. Moreover, it was shown by Amann in \cite{Amann97} and Weis in \cite{Weis97} that the UMD condition on the underlying space, which is required for multiplier theorems in the $L^{p}$-scale, can be avoided in the Besov scale. Similar results on Triebel--Lizorkin spaces have been obtained in \cite{Bu-Kim05a,Bu-Kim05b}. In \cite{Shahmurov10} some of the results of \cite{Girardi-Weis03} have been extended to the setting where $p\neq q$.

We aim to prove Fourier multiplier results on Besov spaces without any smoothness conditions on the multiplier $m$. Our main result is as follows (for type and cotype see Section \ref{sec:type and cotype assumptions}, for the definition of Besov spaces and the dyadic annuli $I_{k}$ see Section \ref{Besov spaces}):

\begin{theorem}\label{main result Besov multipliers typeintro}
Let $X$ be a Banach space with type $p\in[1,2]$ and $Y$ a Banach space with cotype $q\in[2,\infty]$, and let $r\in[1,\infty]$ be such that $\tfrac{1}{r}=\tfrac{1}{p}-\tfrac{1}{q}$. Let $m:\Rd\to\La(X,Y)$ be an $X$-strongly measurable map such that $\left(2^{k\sigma}\gamma(\{m(\xi)\!\mid\! \xi\in I_{k}\})\right)_{k\in\N_{0}}\in\ell^{u}$ for some $\sigma\in\R$ and $u\in[1,\infty]$. Then there exists a constant $C\geq 0$ independent of $m$ such that $T_{m}$ extends to a bounded linear map $\widetilde{T_{m}}:\Be^{s}_{p,v}(\Rd;X) \to \Be^{s+\sigma-d/r}_{q,w}(\Rd;Y)$
with
\begin{align*}
\|\widetilde{T_{m}}\|_{\La(\Be^{s}_{p,v}(\Rd;X),\Be^{s+\sigma-d/r}_{q,w}(\Rd;Y))}\!\leq C\norm{\Big(2^{k\sigma}\gamma(\{m(\xi)\mid \xi\in I_{k}\})\Big)_{k}}_{\ell^{u}}
\end{align*}
for all $s\in\R$ and all $v,w\in[1,\infty]$ with $\frac{1}{w}\leq \frac{1}{u}+\frac{1}{v}$.
\end{theorem}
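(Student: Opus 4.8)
The plan is to decompose the multiplier action along the dyadic Littlewood--Paley blocks and to estimate each block using a Fourier multiplier theorem for functions with compactly supported Fourier transform, which I assume is available from an earlier section (it is advertised in the abstract as the basis for all results). For $f\in\Be^{s}_{p,v}(\Rd;X)$ write $f=\sum_{j}\Delta_{j}f$ with $\Delta_{j}$ the usual Littlewood--Paley projections adapted to the annuli $I_{j}$. Since $\widehat{\Delta_{j}f}$ is supported in a fixed dilate of $I_{j}$ and $m(\xi)$ restricted to that region has $\gamma$-bound controlled by $\gamma(\{m(\xi)\mid\xi\in I_{k}\})$ for the finitely many $k$ with $I_{k}$ meeting the support, the compact-Fourier-support multiplier theorem gives, for each pair $(j,k)$ with $|j-k|\le 2$ say,
\begin{align*}
\norm{\Delta_{j}T_{m}\Delta_{k}f}_{\Ellq(\Rd;Y)}\leq C\, 2^{kd/r}\,\gamma(\{m(\xi)\mid\xi\in I_{k}\})\,\norm{\Delta_{k}f}_{\Ellp(\Rd;X)},
\end{align*}
where the factor $2^{kd/r}$ is the price of passing from $\Ellp$ to $\Ellq$ on a frequency block of size $2^{k}$ (this is exactly the $d/r$ loss in the target smoothness index, with $\tfrac1r=\tfrac1p-\tfrac1q$). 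The key point is that $T_{m}$ is \emph{almost diagonal} with respect to the Littlewood--Paley decomposition: only boundedly many terms $\Delta_{j}T_{m}\Delta_{k}$ are nonzero for each $j$, because the Fourier support of $m(\xi)\widehat{\Delta_{k}f}(\xi)$ still lies in $I_{k}$.

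Next I would assemble the Besov norm. By definition $\norm{\widetilde{T_{m}}f}_{\Be^{s+\sigma-d/r}_{q,w}}\asymp\norm{(2^{j(s+\sigma-d/r)}\norm{\Delta_{j}\widetilde{T_{m}}f}_{\Ellq})_{j}}_{\ell^{w}}$, and using the almost-diagonality together with the block estimate above,
\begin{align*}
2^{j(s+\sigma-d/r)}\norm{\Delta_{j}\widetilde{T_{m}}f}_{\Ellq}
\leq C\sum_{|k-j|\le 2} 2^{k\sigma}\gamma(\{m(\xi)\mid\xi\in I_{k}\})\, 2^{ks}\norm{\Delta_{k}f}_{\Ellp},
\end{align*}
after absorbing the $2^{kd/r}$ into the $2^{-jd/r}$ and using $|j-k|\le2$ to replace $j$ by $k$ in the exponents at the cost of a harmless constant. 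The right-hand side is a convolution (over the finite band $|k-j|\le2$, hence in particular over all of $\Z$) of the sequence $(2^{k\sigma}\gamma_{k})_{k}\in\ell^{u}$ with the sequence $(2^{ks}\norm{\Delta_{k}f}_{\Ellp})_{k}\in\ell^{v}$. Taking the $\ell^{w}$-norm in $j$ and invoking Young's inequality for sequences, which is valid precisely when $\tfrac1w\le\tfrac1u+\tfrac1v$ (the stated hypothesis on $w$), yields
\begin{align*}
\norm{\widetilde{T_{m}}f}_{\Be^{s+\sigma-d/r}_{q,w}(\Rd;Y)}\leq C\,\norm{(2^{k\sigma}\gamma(\{m(\xi)\mid\xi\in I_{k}\}))_{k}}_{\ell^{u}}\,\norm{f}_{\Be^{s}_{p,v}(\Rd;X)},
\end{align*}
which is the claimed bound. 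Density of, say, Schwartz functions (or of $f$ with compactly supported, $C^{\infty}$ Fourier transform) in $\Be^{s}_{p,v}(\Rd;X)$ when $v<\infty$, together with a routine weak-$*$/duality argument for $v=\infty$, then provides the bounded extension $\widetilde{T_{m}}$ and shows it is independent of $m$ only through the stated quantity.

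The main obstacle is the per-block estimate: proving that on a single dyadic annulus the $\Ellp(\Rd;X)\to\Ellq(\Rd;Y)$ norm of a Fourier multiplier is controlled, up to the dilation factor $2^{kd/r}$, by the $\gamma$-bound $\gamma(\{m(\xi)\mid\xi\in I_{k}\})$ of the range of $m$ on that annulus. This is where the type $p$ of $X$ and cotype $q$ of $Y$ enter: one uses type to pass from the $\Ellp$-norm of $f$ to a Gaussian (or Rademacher) average of its frequency-localized pieces, the $\gamma$-boundedness of $\{m(\xi)\}$ to handle the operator action inside the average, and cotype of $Y$ to return from the Gaussian average to the $\Ellq$-norm of the output; the scaling factor $2^{kd/r}$ comes from a Nikolskii-type inequality on the fixed-size annulus rescaled to $I_{k}$. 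I expect this ingredient to be exactly the compact-Fourier-support multiplier theorem referenced in the abstract, so in the present proof it is invoked as a black box, and the remaining work is the bookkeeping with Littlewood--Paley sums and Young's inequality described above. A secondary technical point to handle with care is the endpoint $q=\infty$ (cotype $\infty$, i.e.\ no restriction) and $p=1$ (type $1$), where the $\gamma$-bound and the averaging arguments degenerate to trivialities but the statement must still be read correctly; and the measurability/definability of $\xi\mapsto\gamma(\{m(\xi)\mid\xi\in I_{k}\})$, which should be dealt with in the section on type and cotype assumptions and can be cited here.
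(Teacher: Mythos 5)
Your proposal follows essentially the same two-step strategy as the paper: invoke the compact-Fourier-support multiplier estimate (Proposition~\ref{compact Fourier support multipliers}) on each Littlewood--Paley block, then assemble the Besov norm using the near-diagonality of $T_m$ with respect to the dyadic decomposition and a summation inequality in $k$ (the paper packages this assembly step as Proposition~\ref{prop:multipliers_on_Besov_spaces}). Two small inaccuracies worth fixing: what you call ``Young's inequality for sequences'' is really H\"older's inequality for the pointwise product followed by the contractive embedding $\ell^z\subseteq\ell^w$ with $\tfrac1z=\tfrac1u+\tfrac1v$ (genuine Young would impose the stronger constraint $1+\tfrac1w\le\tfrac1u+\tfrac1v$, not the stated $\tfrac1w\le\tfrac1u+\tfrac1v$; the paper exploits the exact commutation $\ph_k\ast T_m f=T_m(\ph_k\ast f)$ to avoid any off-diagonal terms and apply H\"older directly); and $\gamma(\{m(\xi)\mid\xi\in I_k\})$ is a fixed number for each $k$, not a function of $\xi$, so there is no measurability issue to worry about. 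For $v=\infty$ the paper does not argue by duality but defines the extension directly as a convergent series in $\Sw'(\Rd;Y)$ (with uniqueness addressed separately in Remark~\ref{rem:unique}), which is cleaner than a weak-$*$ argument but amounts to the same goal.
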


For a proof of this result see Theorem \ref{main result Besov multipliers type}. If $m$ is scalar-valued then the $\gamma$-bound reduces to a uniform bound. A similar result is derived under Fourier type assumptions and in that case the $\gamma$-bound can also be replaced by a uniform bound. A version of Theorem \ref{main result Besov multipliers typeintro} in the $L^p$-$L^q$-scale was obtained in \cite{Rozendaal-Veraar16Fourier}, where it is assumed that $X$ has type $p_{0}>p$ and $Y$ cotype $q_{0}<q$. The proof of Theorem \ref{main result Besov multipliers typeintro} is based on an $L^p$-$L^q$ Fourier multiplier result for functions with compact Fourier support. As a corollary of our results on Besov spaces we also obtain a multiplier theorem in the $L^{p}$-$L^{q}$-scale.

Under smoothness conditions on $m$ (which depend on the Fourier type of $X$ and $Y$) the boundedness result extends to all values of $1<p\leq q<\infty$ such that $\frac{1}{p}-\frac1q = \frac1r$. The latter statement was given in \cite{Rozendaal-Veraar16Fourier} without proof. Here we present the proof which is an extension of the extrapolation results of the classical paper of H\"ormander \cite{Hormander60} to the case $p\leq q$. Part of our extrapolation result is new even in the scalar case.

Fourier multiplier theorems on vector-valued Besov spaces have found applications to boundary value problems, maximal regularity, the stability theory for $C_{0}$-semigroups and functional calculus theory (see \cite{Amann97, Arendt-Bu02, Weis97, Haase-Rozendaal16}). The results in this paper have already been applied in \cite{Rozendaal15} and in the forthcoming paper \cite{Rozendaal-Veraar16Stability}.

The paper is organized as follows. In Section \ref{preliminaries} we discuss preliminaries for the rest of the paper. In Section \ref{sec:Fourierm} we introduce operator-valued Fourier multipliers on vector-valued function spaces, and we consider some properties which are specific to multipliers on Besov spaces. In Section \ref{sec:type and cotype assumptions} we prove our main multiplier theorems on Besov spaces and derive a corollary in the $L^p$-scale. Then in Section \ref{extrapolation} we prove our extrapolation results, first with conditions on the kernel of the Fourier multiplier operator and then with conditions on the symbol of the operator.

\subsection{Notation and terminology}

The natural numbers are $\N:=\left\{1,2,3,\ldots\right\}$, and $\N_{0}:=\N\cup\left\{0\right\}$.

Nonzero Banach spaces over the complex numbers are denoted by $X$ and $Y$, and the space of bounded linear operators from $X$ to $Y$ is $\La(X,Y)$. We set $\La(X):=\La(X,X)$, and we write $\mathrm{I}_{X}$ for the identity operator on $X$.

For $p\in[1,\infty]$ and a measure space $(\Omega,\mu)$, we let $\Ellp(\Omega;X)$ be the Bochner space of equivalence classes of strongly measurable $X$-valued functions on $\Omega$ which are $p$-integrable. When, for a map $f:\Omega\to X$, we write $\|f\|_{\Ell^{p}(\Omega;X)}<\infty$ then it is implicitly assumed that $f$ is strongly measurable. We denote by $p'$ the H\"{o}lder conjugate of $p$, which is defined by $1=\frac{1}{p}+\frac{1}{p'}$. We let $\ell^{p}$ be the space of $p$-summable sequences $(x_{k})_{k\in\N_{0}}\subseteq\C$ over $\N_{0}$, while $\ell^{p}(\Z)$ is the space of $p$-summable sequences $(x_{k})_{k\in\Z}\subseteq\C$ over $\Z$.

A function $m:\Omega\to\La(X,Y)$ is said to be \emph{$X$-strongly measurable} if $\omega\mapsto m(\omega)x$ is strongly measurable as a map from $\Omega$ to $Y$ for all $x\in X$. Throughout we will identify a scalar function $m:\Rd\to\C$ with the associated operator-valued function $\widetilde{m}:\Rd\to\La(X)$ given by $\widetilde{m}(\xi):=m(\xi)\mathrm{I}_{X}$ for $\xi\in\Rd$.

For $d\in\N$ the class of $X$-valued Schwartz functions is $\Sw(\Rd;X)$, and $\Sw'(\Rd;X)$ is the space of $X$-valued tempered distributions. We let $\Sw(\Rd):=\Sw(\Rd;\C)$ and we denote by $\lb \cdot,\cdot\rb:\Sw'(\Rd;X)\times \Sw(\Rd)\to X$ the $X$-valued duality between $\Sw'(\Rd;X)$ and $\Sw(\Rd)$. The Fourier transform of a $\Phi\in\Sw'(\Rd;X)$ is denoted by $\F \Phi$ or $\widehat{\Phi}$, and its inverse Fourier transform by $\F^{-1}\Phi$ or $\check{\Phi}$. The Fourier transform is normalized as follows:
\begin{align*}
\widehat{f}(\xi)=\F f(\xi):=\int_{\Rd}\ue^{-2\pi\ui \xi \cdot t}f(t)\,\ud t
\end{align*}
for $f\in\Ell^{1}(\Rd;X)$ and $\xi\in\Rd$. By $\supp(\Phi)\subseteq\Rd$ we denote the distributional support of $\Phi\in\Sw'(\Rd;X)$. For $\Omega\subseteq\Rd$ we define
\begin{align}\label{analytic Schwartz functions}
\Sw_{\Omega}(\Rd;X):=\{f\in\Sw(\Rd;X)\mid \supp(\widehat{f}\,)\subseteq\Omega\}\subseteq \Sw(\Rd;X)
\end{align}
and, for $p\in[1,\infty]$,
\begin{align}\label{analytic Lp-functions}
\Ell^{p}_{\Omega}(\Rd;X):=\{f\in\Ellp(\Rd;X)\mid \supp(\widehat{f}\,)\subseteq\Omega\}\subseteq \Ellp(\Rd;X).
\end{align}

A complex standard Gaussian random variable on a probability space $(\Omega,\mathbb{P})$ is a random variable $\gamma$ of the form $\gamma=\frac{\gamma_{r}+\ui \gamma_{i}}{\sqrt{2}}$, where $\gamma_{r},\gamma_{i}:\Omega\to\R$ are independent real standard Gaussians on $\Omega$. A \emph{Gaussian sequence} is a sequence $(\gamma_{k})_{k}$ (finite or infinite) of independent complex standard Gaussian random variables on some probability space.

\section{Preliminaries on function spaces}\label{preliminaries}

In this section we present some of the background on function space theory which will be used throughout the paper.

\subsection{Besov spaces}\label{Besov spaces}

We first define vector-valued Besov spaces. For more details on these spaces see e.g.~\cite{Amann97,Bergh-Lofstrom, Triebel10}.

Throughout this section, fix $d\in\N$. Let $\psi\in\Sw(\R)$ be such that
\begin{align}\label{Schwartz function defining Besov spaces}
\supp(\widehat{\psi})\subseteq[\tfrac{1}{2},2],\quad \widehat{\psi}\geq 0\quad{\text{and}} \quad\sum_{k=-\infty}^{\infty}\widehat{\psi}(2^{-k}\xi)=1\quad(\xi\in(0,\infty)).
\end{align}
For $k\in\N$, define
\begin{align}\label{dyadic annuli}
I_{k}:=\{\xi\in\Rd\mid 2^{k-1}\leq \abs{\xi}\leq 2^{k+1}\}\quad \text{and}\quad I_{0}:=\{\xi\in\Rd\mid \abs{\xi}\leq 2\}.
\end{align}
Moreover, let $(\ph_{k})_{k\in\N_{0}}\subseteq\Sw(\Rd)$ be such that
\begin{align}\label{Littlewood-Paley functions}
\widehat{\ph_{k}}(\xi)=\widehat{\psi}(2^{-k}\abs{\xi})\quad\text{for }k\in\N\quad \text{and}\quad \widehat{\ph_{0}}(\xi)=1-\sum_{k=1}^{\infty}\widehat{\ph_{k}}(\xi)
\end{align}
for all $\xi\in\Rd$. For notational simplicity we let $\ph_{k}:= 0$ for $k<0$. Then $\sum_{k=0}^{\infty}\widehat{\ph_{k}}=1$ for all $\xi\in\Rd$, and for all $k\in\N_{0}$ it holds that $\supp(\widehat{\ph_{k}})\subseteq I_{k}$, $\widehat{\ph_{k}}(\xi)=0$ if $\xi\in I_{n}$ for $n\notin \{k-1,k,k+1\}$ and $\widehat{\ph_{k-1}}(\xi)+\widehat{\ph_{k}}(\xi)+\widehat{\ph_{k+1}}(\xi)=1$ if $\xi\in\supp(\widehat{\ph_{k}})$. Throughout this article we keep the function $\psi$ from \eqref{Schwartz function defining Besov spaces} and the sequence $(\ph_{k})_{k\in\N_{0}}\subseteq\Sw(\Rd)$ from \eqref{Littlewood-Paley functions} fixed.

Let $X$ be a Banach space and let $s\in\R$ and $p,v\in[1,\infty]$. The \emph{inhomogeneous Besov space} $\Be^{s}_{p,v}(\Rd;X)$ is the space of all $f\in\Sw'(\Rd;X)$ such that $\ph_{k}\ast f\in\Ellp(\Rd;X)$ for all $k\in\N_{0}$ and
\begin{align*}
\norm{f}_{\Be^{s}_{p,v}(\Rd;X)}:=\Big\|\Big(2^{ks}\big\|\ph_{k}\ast f\big\|_{\Ellp(\Rd;X)}\Big)_{k\in\N_{0}}\Big\|_{\ell^{v}}<\infty,
\end{align*}
endowed with the norm $\norm{\cdot}_{\Be^{s}_{p,v}(\Rd;X)}$. Then $\Be^{s}_{p,v}(\R;X)$ is a Banach space and the continuous inclusions
\begin{align*}
\Sw(\Rd;X)\subseteq\Be^{s}_{p,v}(\Rd;X)\subseteq\Sw'(\Rd;X)
\end{align*}
hold. Here the second embedding has dense range, as does the first embedding if $p,v\in[1,\infty)$. A different choice of $\psi$ satisfying \eqref{Schwartz function defining Besov spaces} would yield an equivalent norm on $\Be^{s}_{p,v}(\Rd;X)$. Generally $s$ is called the \emph{smoothness index} of $\Be^{s}_{p,v}(\Rd;X)$.

For $p\in[1,\infty]$, $s,t\in\R$ with $t<s$ and $v,w\in[1,\infty]$ with $v\leq w$, the following embeddings hold:
\begin{align}\label{embedding between Besov spaces}
\Be^{s}_{p,v}(\Rd;X)\subseteq \Be^{s}_{p,w}(\Rd;X)\subseteq\Be^{t}_{p,1}(\Rd;X).
\end{align}
Here the first embedding is a contraction and the norm of the second embedding is independent of $X$.

For later use we note, as is straightforward to check, that there exist constants $C_{1},C_{2}\in(0,\infty)$ such that, for each Banach space $X$ and all $p,v\in[1,\infty]$, $s\in\R$, $n\in\N$ and $f\in\Ellp(\Rd;X)$ with $\supp(\widehat{f}\,)\subseteq I_{n}$,
\begin{align}\label{Lp-norm on dyadic parts}
C_{1}2^{(n-1)\abs{s}}\|f\|_{\Ellp(\Rd;X)}\leq \|f\|_{\Be^{s}_{p,v}(\Rd;X)}\leq C_{2}2^{(n+1)\abs{s}}\|f\|_{\Ellp(\Rd;X)}.
\end{align}

\medskip

We will also consider homogeneous Besov spaces. To define these we first introduce vector-valued homogeneous distributions. Let
\begin{align*}
\dot{\Sw}(\Rd;X):=\{f\in\Sw(\Rd;X)\mid \mathrm{D}^{\alpha}\!\widehat{f}(0)=0\text{ for all } \alpha\in\N_{0}^{d}\}.
\end{align*}
Endow $\dot{\Sw}(\Rd;X)$ with the subspace topology of $\Sw(\Rd;X)$ and let $\dot{\Sw}(\Rd):=\dot{\Sw}(\Rd;\C)$. Let $\dot{\Sw}'(\Rd;X)$ be the space of continuous linear mappings from $\dot{\Sw}(\Rd)$ to $X$. Each $f\in \Sw'(\Rd;X)$ induces an $f\restriction_{\dot{\Sw}(\Rd)}\in\dot{\Sw}'(\Rd;X)$ by restriction, and for $f,g\in \Sw'(\Rd;X)$ one has $f\restriction_{\dot{\Sw}(\Rd)}=g\restriction_{\dot{\Sw}(\Rd)}$ if and only if $\supp(\widehat{f}-\widehat{g})\subseteq\{0\}$. Conversely, the following lemma shows that each $f\in \dot{\Sw}'(\Rd;X)$ extends to an element of $\Sw'(\Rd;X)$.

\begin{lemma}\label{lem:extenddistri}
Let $X$ be a Banach space and let $u\in \dot{\Sw}'(\Rd;X)$. Then there exists a $\widetilde{u}\in \Sw'(\R^d;X)$ such that $\widetilde{u}\restriction_{\dot{\Sw}(\Rd)}=u$.
\end{lemma}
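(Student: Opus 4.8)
The plan is to construct $\widetilde{u}$ by hand, using a homogeneous Littlewood--Paley decomposition together with a renormalisation of the low‑frequency part at the origin in frequency. For $k\in\Z$ let $\eta_{k}\in\Sw(\Rd)$ be the Schwartz function with $\wh{\eta_{k}}(\xi)=\wh{\psi}(2^{-k}\abs{\xi})$ --- a homogeneous analogue of the $\ph_{k}$ from \eqref{Littlewood-Paley functions}. Then $\supp(\wh{\eta_{k}})\subseteq\{\xi\in\Rd\mid 2^{k-1}\leq\abs{\xi}\leq 2^{k+1}\}$ and, by \eqref{Schwartz function defining Besov spaces}, $\sum_{k\in\Z}\wh{\eta_{k}}(\xi)=1$ for all $\xi\neq 0$. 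For every $\phi\in\Sw(\Rd)$ and $k\in\Z$ the convolution $\eta_{k}\ast\phi$ has Fourier support in an annulus bounded away from $0$, hence $\eta_{k}\ast\phi\in\dot{\Sw}(\Rd)$ and $u(\eta_{k}\ast\phi)\in X$ is defined.

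First I would fix, by continuity of $u$, an $N\in\N$ and a $C\geq 0$ such that $\norm{u(g)}_{X}\leq C\,q_{N}(g)$ for all $g\in\dot{\Sw}(\Rd)$, where $q_{N}$ is the $N$-th seminorm of a fixed fundamental system of Schwartz seminorms. Since $\wh{\phi}$ is Schwartz and $\F(\eta_{k}\ast\phi)$ is supported at frequency $\abs{\xi}\sim 2^{k}$, a routine estimate gives, for each $L\in\N$, constants $C_{L}\geq 0$ and $M_{L}\in\N$ with $q_{N}(\eta_{k}\ast\phi)\leq C_{L}\,2^{-kL}\,q_{M_{L}}(\phi)$ for all $k\geq 0$. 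Hence $\sum_{k\geq 0}u(\eta_{k}\ast\phi)$ converges absolutely in $X$ and $\phi\mapsto\sum_{k\geq 0}u(\eta_{k}\ast\phi)$ is a bounded operator $\Sw(\Rd)\to X$.

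The difficulty --- and the main obstacle --- is that the low‑frequency series $\sum_{k<0}u(\eta_{k}\ast\phi)$ need not converge: already for $d=1$ and $\phi$ with $\int_{\R}\phi\neq 0$, one has $\eta_{k}\ast\phi\approx\big(\int\phi\big)\eta_{k}$ for $k\to-\infty$, and the Schwartz seminorms of $\eta_{k}$ blow up. To repair this I would set $M:=N$ and, for $\phi\in\Sw(\Rd)$, subtract the degree‑$M$ Taylor polynomial of $\wh{\phi}$ at $0$ before reconstructing, i.e.\ put
\begin{align*}
R_{\phi}(\xi):=\wh{\phi}(\xi)-\sum_{\abs{\alpha}\leq M}\frac{(\D^{\alpha}\wh{\phi})(0)}{\alpha!}\,\xi^{\alpha}
\qquad\text{and}\qquad
g_{k,\phi}:=\F^{-1}\big(\wh{\eta_{k}}\,R_{\phi}\big)\quad(k\in\Z).
\end{align*}
Since $\wh{\eta_{k}}$ has compact support, $\wh{\eta_{k}}R_{\phi}\in\Ccinf(\Rd)$ is supported in the same annulus as $\wh{\eta_{k}}$, so $g_{k,\phi}\in\dot{\Sw}(\Rd)$, and $\phi\mapsto g_{k,\phi}$ is linear and continuous $\Sw(\Rd)\to\Sw(\Rd)$; equivalently, $g_{k,\phi}$ is $\eta_{k}\ast\phi$ minus the corresponding linear combination of the derivatives $\D^{\alpha}\eta_{k}$ (this combination is what diverges when summed over $k<0$, but the difference does not). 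Taylor's theorem gives $\abs{\D^{\gamma}R_{\phi}(\xi)}\lesssim 2^{k(M+1-\abs{\gamma})}q_{M'}(\phi)$ on $\abs{\xi}\sim 2^{k}$, and hence $q_{N}(g_{k,\phi})\lesssim 2^{k(d+M+1-N)}q_{M'}(\phi)=2^{k(d+1)}q_{M'}(\phi)$ for $k<0$; as $d+1>0$ this is geometrically decaying, so $\sum_{k<0}u(g_{k,\phi})$ converges absolutely in $X$ and is bounded in $\phi$.

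Finally I would define $\widetilde{u}(\phi):=\sum_{k\geq 0}u(\eta_{k}\ast\phi)+\sum_{k<0}u(g_{k,\phi})$, which by the above lies in $\Sw'(\Rd;X)$, and check that $\widetilde{u}\restriction_{\dot{\Sw}(\Rd)}=u$. If $\phi\in\dot{\Sw}(\Rd)$ then $\D^{\alpha}\wh{\phi}(0)=0$ for all $\alpha$, so $R_{\phi}=\wh{\phi}$ and $g_{k,\phi}=\eta_{k}\ast\phi$; moreover $\sum_{k\in\Z}\eta_{k}\ast\phi$ converges in $\dot{\Sw}(\Rd)$ --- rapidly at high frequencies as before, and rapidly as $k\to-\infty$ because $\wh{\phi}$ vanishes to infinite order at $0$ --- with limit $\phi$, since it already converges to $\phi$ in $\Sw'(\Rd)$. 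Continuity of $u$ then yields $\widetilde{u}(\phi)=\sum_{k\in\Z}u(\eta_{k}\ast\phi)=u\big(\sum_{k\in\Z}\eta_{k}\ast\phi\big)=u(\phi)$. The reason a counterterm is needed at all is that there is no continuous linear projection of $\Sw(\Rd)$ onto $\dot{\Sw}(\Rd)$ (the quotient is isomorphic to $\C^{\N}$, which has no continuous norm while $\Sw(\Rd)$ does), so the extension cannot be $u$ composed with a universal map and must instead use the finite order $N$ of $u$; once the right cutoff degree $M=N$ is chosen, the rest is bookkeeping with Schwartz seminorms.
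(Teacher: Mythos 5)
Your proof is correct, and it takes a genuinely different route from the paper's, though both rest on the same underlying idea: the finite order $N$ of $u$ means that only finitely many Taylor coefficients of $\wh{\phi}$ at the origin are "seen" by $u$, so one can subtract a Taylor polynomial of degree $N$ and still land in the domain where $u$ makes sense. The paper implements this in two steps: first it shows $\dot{\Sw}(\Rd)$ is dense in $\Sw_k(\Rd)=\{f\in\Sw(\Rd)\mid D^{\alpha}\wh{f}(0)=0,\ \abs{\alpha}\leq k\}$ for the relevant norm (by cutting off low frequencies and using Taylor's theorem), so $u$ extends continuously to $\Sw_k(\Rd)$; it then defines $vf:=u(f-L_kf)$, where $L_kf=\sum_{\abs{\beta}\leq k}g_{\beta}\,D^{\beta}\wh{f}(0)$ is a fixed (not frequency-localised) Taylor correction built from chosen Schwartz functions $g_{\beta}$ with $D^{\alpha}\wh{g_{\beta}}(0)=\delta_{\alpha\beta}$. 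You instead avoid the density step entirely and construct $\widetilde{u}$ directly as an absolutely convergent sum over a homogeneous Littlewood--Paley decomposition, applying $u$ to each annular piece and inserting the Taylor counterterm only in the low-frequency part where it is needed. Your estimates are correct: the high-frequency pieces $\eta_{k}\ast\phi$ have rapidly decaying Schwartz seminorms by the support of $\wh{\eta_k}$ and the decay of $\wh{\phi}$; on the low-frequency side, the exponent $d+M+1-N$ in your bound $q_{N}(g_{k,\phi})\lesssim 2^{k(d+M+1-N)}q_{M'}(\phi)$ is indeed positive for $M=N$ (in fact $M\geq N-d$ would suffice), and the Taylor-remainder bounds used there only require $\abs{\gamma}\leq M$, which holds since the seminorm only sees derivatives up to order $N=M$. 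The verification that $\widetilde{u}$ restricts to $u$ on $\dot{\Sw}(\Rd)$ is also correct, since $\sum_{k}\eta_{k}\ast\phi$ converges to $\phi$ in the $\Sw$-topology when $\wh{\phi}$ vanishes to infinite order at $0$. What the paper's version buys is brevity and a clean one-line extension formula; what yours buys is a fully explicit construction with no density argument, and a transparent explanation (via your closing remark on the non-existence of a continuous projection $\Sw(\Rd)\to\dot{\Sw}(\Rd)$) of why the order of $u$ must enter the construction.
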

In the scalar case the statement of the lemma is a straightforward consequence of the Hahn-Banach theorem. Unfortunately, in the vector-valued setting one cannot argue in this way.
\begin{proof}
Let $k\in \N_0$ be such that
\begin{align}\label{eq:ucontext}
\|u f\|_X \leq C \!\sum_{|\alpha|, |\beta|\leq k} \|x^{\alpha} D^{\beta} f\|_{\infty}
\end{align}
for all $f\in \dot{\Sw}(\R^d)$. By an approximation argument $u$ can be extended to
\[
\Sw_k(\R^d) := \{f\in \Sw(\Rd)\mid D^{\alpha} \hat{f}(0) = 0 \text{ for all }  |\alpha|\leq k\}.
\]
Indeed, to see this by \eqref{eq:ucontext} it suffices to show that $\dot{\Sw}(\R^d)$ is dense in $\Sw_k(\R^d)$ with respect to the norm
\begin{equation}\label{eq:Sk-norm}
\sum_{|\alpha|, |\beta|\leq k} \|x^{\alpha} D^{\beta} (\cdot)\|_{\infty}.
\end{equation}
Let $\varphi\in C^\infty(\R^d)$ be such that $\varphi(\xi) = 1$ if $|\xi|\geq 2$, $\varphi(\xi) = 0$ if $|\xi|\leq 1$, and $0\leq \varphi\leq 1$.
For $n\in\N$ and $\xi\in\Rd$ let $\varphi_n(\xi) := \varphi(n\xi)$. For $f\in \Sw_k(\R^d)$ let $f_n := \F^{-1}(\varphi_n) *f$.  By Taylor's theorem there exists a constant $C\geq 0$ such that
\[
\|D^{\gamma} \wh f(\xi)\| \leq C |\xi|^{k+1 - |\gamma|}
\]
for every $|\gamma|\leq k$ and $\xi\in\Rd$ with $\abs{\xi}\leq 1$.
Using this one readily checks that $f_n\to f$ in the norm \eqref{eq:Sk-norm}.

Finally, we extend $u$ from $\Sw_k(\R^d)$ to $\Sw(\R^d)$. In order to do so fix $g_{\beta}\in \Sw(\R^d)$ such that $D^{\alpha} \wh g_{\beta}(0) = 1$ if $\alpha = \beta$ and zero if $\alpha \neq \beta$. Now let $f\in\Sw(\Rd)$ and let $L_k f\in \Sw(\R^d)$ be given by
\[
L_kf (\xi) := \sum_{|\beta|\leq k} g_\beta(\xi) D^{\beta}\hat{f}(0).
\]
Then $f- L_k f\in \Sw_k(\R^d)$ and we can define $v f := u(f - L_kf)\in X$. Then $vf = uf$ if $f\in \Sw_k(\R^d)$, since $L_k$ vanishes on $\Sw_k(\R^d)$. Moreover, by \eqref{eq:ucontext},
\begin{align*}
\|vf\|_X &\leq C \sum_{|\alpha|, |\beta|\leq k} \|x^{\alpha} D^{\beta} (f-L_kf)\|_{\infty}
\\ & \leq C \sum_{|\alpha|, |\beta|\leq k} \|x^{\alpha} D^{\beta} f\|_{\infty} + C \sum_{|\alpha|, |\beta|\leq k} \|x^{\alpha} D^{\beta} L_kf\|_{\infty}
\\ & \leq \tilde{C} \sum_{|\alpha|, |\beta|\leq k} \|x^{\alpha} D^{\beta} f\|_{\infty}
\end{align*}
for a constant $\tilde{C}\geq 0$. Hence $v\in \Sw'(\R^d;X)$ and the proof is concluded.
\end{proof}

It follows from Lemma \ref{lem:extenddistri} and the statements preceding it that $\dot{\Sw}'(\Rd;X)=\Sw'(\R^d;X)/\Pa(\Rd;X)$, where $\Pa(\Rd;X):=\{f\in\Sw'(\Rd;X)\mid \supp(\widehat{f})\subseteq\{0\}\}$. Moreover, $\Pa(\Rd;X)=\Pa(\Rd)\otimes X$ for $\Pa(\Rd)$ the polynomials on $\Rd$, as can be shown in the same way as \cite[Proposition 2.4.1]{Grafakos08}. If $F(\Rd;X)\subseteq\Sw'(\Rd;X)$ is a linear subspace such that, for all $\Phi\in F(\Rd;X)$, $\Phi=0$ if $\supp(\widehat{\Phi}\,)\subseteq\{0\}$, then we identify $F(\Rd;X)$ with its image in $\dot{\Sw}'(\Rd;X)$ under the quotient map $\Sw'(\Rd;X)\to \dot{\Sw}'(\Rd;X)$. This is the case if $F(\Rd;X)$ is a Besov space or an $L^{p}$-space for some $p\in[1,\infty]$.

Let $\psi\in\Sw(\R)$ be as in \eqref{Schwartz function defining Besov spaces}, and for $k\in\Z$ let
\begin{align}\label{eq:dyadicJ}
J_{k}:=\{\xi\in\Rd\mid 2^{k-1}\leq \abs{\xi}\leq 2^{k+1}\}.
\end{align}
Let $\psi_{k}\in\Sw(\Rd)$ be such that $\widehat{\psi}(\xi)=\psi(2^{-k}\abs{\xi})$ for $\xi\in\Rd$. Throughout this article we will keep the sequence $(\psi_{k})_{k\in\Z}$ fixed.

Let $s\in\R$ and $p,v\in[1,\infty]$. The \emph{homogeneous Besov space} $\dot{\Be}^{s}_{p,v}(\Rd;X)$ consists of all $f\in\dot{\Sw}'(\Rd;X)$ such that $\psi_{k}\ast f\in\Ellp(\Rd;X)$ for each $k\in\Z$ and
\begin{align*}
\|f\|_{\dot{\Be}^{s}_{p,v}(\Rd;X)}:=\Big\|\Big(2^{ks}\|\psi_{k}\ast f\|_{\Ellp(\Rd;X)}\Big)_{k\in\Z}\Big\|_{\ell^{v}(\Z)}<\infty,
\end{align*}
endowed with the norm $\|\cdot\|_{\dot{\Be}^{s}_{p,v}(\Rd;X)}$. Then $\dot{\Be}^{s}_{p,v}(\Rd;X)$ is a Banach space and
\begin{align*}
\dot{\Sw}(\Rd;X)\subseteq\dot{\Be}^{s}_{p,v}(\Rd;X)\subseteq\dot{\Sw}'(\Rd;X)
\end{align*}
continuously, where the first embedding has dense range if $p,v\in[1,\infty)$. Again a different choice of $\psi$ would lead to an equivalent norm on $\dot{\Be}^{s}_{p,v}(\Rd;X)$. Finally, the first embedding in \eqref{embedding between Besov spaces} is clearly still true in the homogeneous setting.

\subsection{Spaces of $\gamma$-radonifying operators}\label{gamma-spaces}

In this section we present some of the basics of the theory of $\gamma$-radonifying operators and $\gamma$-boundedness (see \cite{HNVW2}, \cite{Kalton-Weis04}, \cite{vanNeerven10}).

Let $H$ be a Hilbert space and $X$ a Banach space. An operator
$T\in\La(H,X)$ is \emph{$\gamma$-summing} if
\begin{align*}
\norm{T}_{\gamma_{\infty}(H,X)}:=\sup_{F}\Big(\mathbb{E}\Big\|\sum_{h\in
F}\gamma_{h} Th \Big\|_{X}^{2}\Big)^{1/2}<\infty,
\end{align*}
where the supremum is taken over all finite orthonormal
systems $F\subseteq H$ and $(\gamma_{h})_{h\in F}$ is a Gaussian sequence. Let $\gamma_{\infty}(H,X)$ be the space of all $\gamma$-summing operators in $\La(H,X)$, endowed with the norm $\norm{\cdot}_{\gamma_{\infty}(H,X)}$. Then the space of finite-rank operators $H\otimes X\subseteq\La(H,X)$ is contained in $\gamma_{\infty}(H,X)$, and
\begin{align}\label{gamma-norm finite-rank operators}
\Big\|\sum_{k=1}^{n}h_{k}\otimes x_{k}\Big\|_{\gamma_{\infty}(H,X)}=\Big(\mathbb{E}\Big\|\sum_{k=1}^{n}\gamma_{k}x_{k}\Big\|_{X}^{2}\Big)^{1/2}
\end{align}
for all $n\in\N$, $h_{1},\ldots, h_{n}\subseteq H$ orthonormal and $x_{1},\ldots, x_{n}\subseteq X$. We let $\gamma(H,X)$ be the closure in $\gamma_{\infty}(H,X)$ of the finite-rank operators $H\otimes X\subseteq\La(H,X)$, and we call $\gamma(H,X)$ the space of \emph{$\gamma$-radonifying operators}. If $H$ is separable with orthonormal basis $(h_{k})_{k\in\N}\subseteq H$ and $(\gamma_{k})_{k\in\N}$ is a Gaussian sequence, then by \cite[Proposition 3.19]{vanNeerven10} a $T\in\La(H,X)$ is $\gamma$-summing if and only if $\sup_{n\in\N}\mathbb{E}\|\sum_{k=1}^{n}\gamma_{k}T(h_{k})\|^{2}_{X}<\infty$, in which case
\begin{align}\label{testing_orthonormal_basis}
\|T\|_{\gamma_{\infty}(H,X)}=\sup_{n\in\N}\Big(\mathbb{E}\Big\|\sum_{k=1}^{n}\gamma_{k}T(h_{k})\Big\|^{2}_{X}\Big)^{1/2}.
\end{align}
Moreover, $T\in \gamma(H,X)$ if and only if $\sum_{k=1}^{\infty}\gamma_{k}T(h_{k})$ converges in $\Ell^2(\Omega;X)$, in which case \eqref{testing_orthonormal_basis} still holds and also equals the $\Ell^2(\Omega;X)$-norm of the series.

The following lemma introduces a useful property of the spaces of $\gamma$-summing and $\gamma$-radonifying operators, the \emph{ideal property}. For a proof see \cite[Theorem 6.2]{vanNeerven10}.

\begin{lemma}\label{ideal property gamma}
Let $H$, $K$ be Hilbert spaces and $X$, $Y$ Banach spaces. Let $R\in \La(X,Y)$, $S\in\gamma_{\infty}(H,X)$ and $T\in\La(K,H)$. Then $RST\in\gamma_{\infty}(K,Y)$ with
\begin{align*}
\norm{RST}_{\gamma_{\infty}(K,Y)}\leq \norm{R}_{\La(X,Y)}\norm{S}_{\gamma_{\infty}(H,X)}\norm{T}_{\La(K,H)}.
\end{align*}
If $S\in\gamma(H,X)$ then $RST\in\gamma(K,Y)$.
\end{lemma}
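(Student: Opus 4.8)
The plan is to prove the ideal property for $\gamma$-summing operators directly from the definition via a supremum over finite orthonormal systems, and then deduce the $\gamma$-radonifying case by a density/closure argument. I will treat the three factors $R$, $S$, $T$ in two stages: first handle postcomposition with $R\in\La(X,Y)$, then handle precomposition with $T\in\La(K,H)$; composing the two estimates yields the claim.

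First, for postcomposition, fix a finite orthonormal system $F\subseteq H$ and a Gaussian sequence $(\gamma_{h})_{h\in F}$. Since $R$ is bounded and linear, $R\sum_{h\in F}\gamma_{h}Sh=\sum_{h\in F}\gamma_{h}RSh$ pointwise on the probability space, so
\begin{align*}
\Big(\mathbb{E}\Big\|\sum_{h\in F}\gamma_{h}RSh\Big\|_{Y}^{2}\Big)^{1/2}\leq \norm{R}_{\La(X,Y)}\Big(\mathbb{E}\Big\|\sum_{h\in F}\gamma_{h}Sh\Big\|_{X}^{2}\Big)^{1/2}\leq \norm{R}_{\La(X,Y)}\norm{S}_{\gamma_{\infty}(H,X)}.
\end{align*}
Taking the supremum over all finite orthonormal systems $F\subseteq H$ shows $RS\in\gamma_{\infty}(H,X)$ with $\norm{RS}_{\gamma_{\infty}(H,X)}\leq\norm{R}_{\La(X,Y)}\norm{S}_{\gamma_{\infty}(H,X)}$. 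For precomposition the key point is that $T\in\La(K,H)$ maps a finite orthonormal system in $K$ into $H$, but not necessarily to an orthonormal system; the standard device is to factor $T$ through a partial isometry and a contraction, or more concretely to use the polar-type decomposition $T=U|T|$ with $\norm{|T|}\le\norm{T}$ and $U$ a partial isometry, and to observe that for a partial isometry the image of an orthonormal system is again orthonormal (after discarding the kernel), so the defining supremum for $SU$ is dominated by that for $S$. An alternative, cleaner route is to invoke the right ideal property as the transpose of the left ideal property via the identification of $\gamma_{\infty}(H,X)$ with operators, but I would present the concrete partial-isometry argument to keep the proof self-contained. Combining, $ST=SU|T|$ satisfies $\norm{ST}_{\gamma_{\infty}(K,X)}\le\norm{S}_{\gamma_{\infty}(H,X)}\norm{T}_{\La(K,H)}$, and then applying the postcomposition estimate to $R$ gives the full bound $\norm{RST}_{\gamma_{\infty}(K,Y)}\le\norm{R}\,\norm{S}_{\gamma_{\infty}}\norm{T}$.

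For the last sentence, suppose $S\in\gamma(H,X)$, i.e.\ $S$ lies in the closure of the finite-rank operators $H\otimes X$ in $\gamma_{\infty}(H,X)$. Pick finite-rank operators $S_{n}\to S$ in $\gamma_{\infty}(H,X)$. The map $S'\mapsto RS'T$ is linear and, by the estimate just proved, bounded from $\gamma_{\infty}(H,X)$ to $\gamma_{\infty}(K,Y)$, hence continuous; therefore $RS_{n}T\to RST$ in $\gamma_{\infty}(K,Y)$. Since each $RS_{n}T$ is a finite-rank operator in $K\otimes Y$ (the composition of a finite-rank operator with bounded operators on either side is again finite-rank), its limit $RST$ lies in the closure $\gamma(K,Y)$. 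This completes the argument.

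The main obstacle is the precomposition step: unlike postcomposition, which is immediate by linearity, precomposition with a general $T$ has to grapple with the fact that $T$ need not preserve orthonormality, so one must reduce to the contractive/partial-isometry case. If one is willing to cite the reference \cite[Theorem 6.2]{vanNeerven10} directly this is moot, but for a genuinely self-contained proof this factorization is the one technical point that needs care; everything else is routine manipulation of the Gaussian $L^{2}$-norm and a density argument.
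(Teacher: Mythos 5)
The postcomposition step and the closing density argument are both correct; the paper itself only cites \cite[Theorem 6.2]{vanNeerven10} for this lemma and gives no proof, so comparison of approaches is moot.

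However, your precomposition step has a genuine gap, and the polar decomposition $T=U|T|$ does not repair it. The difficulty is exactly the one you flag: precomposition does not preserve orthonormality. But factoring through a partial isometry does not make this go away. If $(k_1,\dots,k_n)$ is an arbitrary finite orthonormal system in $K$, the images $Uk_1,\dots,Uk_n$ under a partial isometry $U$ with initial space $K_0=(\ker U)^\perp$ have Gram matrix $\big(\lb Uk_i,Uk_j\rb\big)_{ij} = \big(\lb P_{K_0}k_i,k_j\rb\big)_{ij}$, which is $\le I$ but typically $\ne I$. One cannot "discard the kernel": projecting the $k_i$ onto $K_0$ destroys orthonormality, and re-orthonormalizing mixes the Gaussian coefficients, which is precisely what needs to be controlled. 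And even if the $U$ factor were disposed of, you would still be left with $|T|$, a positive contraction that again fails to send orthonormal systems to orthonormal systems, so the same obstruction recurs. The polar decomposition therefore does not reduce the problem to anything simpler than the general case.

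The missing ingredient is a Gaussian covariance-domination (equivalently, singular-value-decomposition plus Kahane contraction) argument, which applies to general $T$ directly and makes the polar decomposition unnecessary. Concretely: for an orthonormal system $(k_i)_{i=1}^n$ in $K$, choose an orthonormal basis $(h_j)_{j=1}^m$ of $\mathrm{span}\{Tk_i\}$ and set $a_{ij}:=\lb Tk_i,h_j\rb$, so that
\[
\sum_{i=1}^n \gamma_i\,STk_i=\sum_{j=1}^m \widetilde\gamma_j\,Sh_j,\qquad \widetilde\gamma_j:=\sum_{i=1}^n a_{ij}\gamma_i.
\]
The vector $(\widetilde\gamma_j)_j$ is centred jointly Gaussian with covariance matrix $A^*A$, where $A=(a_{ij})$ is the matrix of $T$ between the chosen bases, so $A^*A\le\|T\|^2 I$. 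Writing $\|T\|^2 I=A^*A+Q'$ with $Q'\ge 0$, one takes an independent centred Gaussian $(\xi'_j)$ with covariance $Q'$, notes that $(\widetilde\gamma_j+\xi'_j)_j$ has covariance $\|T\|^2 I$ (hence equals $\|T\|(\gamma'_j)_j$ in law for a standard Gaussian sequence $\gamma'$), and applies Jensen's inequality with respect to the conditional expectation given $(\widetilde\gamma_j)$ to get
\[
\mathbb{E}\Big\|\sum_j \widetilde\gamma_j\,Sh_j\Big\|^2 \le \|T\|^2\,\mathbb{E}\Big\|\sum_j \gamma'_j\,Sh_j\Big\|^2 \le \|T\|^2\,\|S\|_{\gamma_\infty(H,X)}^2.
\]
Alternatively one can diagonalize $A^*A$, which turns $(\widetilde\gamma_j)$ into independent Gaussians with variances $\le\|T\|^2$ after a unitary change of the $(h_j)$, and then invoke the Kahane contraction principle; this is the SVD version of the same idea. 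Either way, this lemma --- not the polar decomposition --- is the technical heart of the precomposition bound, and it is what your proof is missing.
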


For a measure space $(\Omega,\mu)$, let $\gamma(\Omega;X)$ (resp.~$\gamma_{\infty}(\Omega;X)$) be the space of all strongly measurable functions $f:\Omega\rightarrow X$ such that $\lb f, x^*\rb\in \Elltwo(\Omega)$ for all $x^*\in X^*$ and for which the operator $J_{f}\in\La(\Ell^{2}(\Omega),X)$, given by
\begin{align}\label{definition integration operator}
J_{f}(g):=\int_{\Omega}g f\,
\,\ud\mu\quad\quad\quad(g\in \Ell^{2}(\Omega)),
\end{align}
is $\gamma$-radonifying (resp.~$\gamma$-summing). Endow $\gamma(\Omega;X)$ and $\gamma_{\infty}(\Omega;X)$ with the norm $\norm{f}_{\gamma(\Omega;X)}:=\norm{J_{f}}_{\gamma_{\infty}(\Ell^{2}(\Omega),X)}$. We will identify elements $f\otimes x\in \Ell^{2}(\Omega)\otimes X$ with the corresponding functions $g\in\gamma(\Omega;X)$ given by $g(\omega):=f(\omega)x$ for $\omega\in\Omega$. If $\Omega=\Rd$ then the following continuous embeddings hold (see \cite[Theorem 1.1]{KvNV08}):
\begin{align}\label{embedding Schwartz functions and gamma-functions}
\Sw(\Rd;X)\hookrightarrow \gamma(\R^d;X)\hookrightarrow \Sw'(\Rd;X)
\end{align}
Each of these embeddings has dense range and the same holds with $\Sw$ and $\Sw'$ replaced by $\dot{\Sw}$ and $\dot{\Sw}'$. In fact, since any $f\in\gamma_{\infty}(\Rd;X)$ with $\supp(\widehat{f}\,)\subseteq\{0\}$ satisfies $f=0$, we may view $\gamma_{\infty}(\Rd;X)$ and $\gamma(\Rd;X)$ as subsets of $\dot{\Sw}'(\Rd;X)$ through the quotient map $\Sw'(\Rd;X)\to \dot{\Sw}'(\Rd;X)$, and we will do so throughout. Note also that $\widehat{f}\in\gamma(\Rd;X)$ and
\begin{align}\label{Fourier transform on gamma}
\|f\|_{\gamma(\Rd;X)}=\|\widehat{f}\|_{\gamma(\Rd;X)}
\end{align}
for each $f\in\gamma(\Rd;X)$, by Lemma \ref{ideal property gamma}.

Let $X$ and $Y$ be Banach spaces. A collection $\mathcal{T}\subseteq \La(X,Y)$ is said to be \emph{$\gamma$-bounded} if there is a constant $C\geq 0$ such that
\begin{align}\label{gamma-boundedness}
\Big(\mathbb{E}\Big\|\sum_{k=1}^{n}\gamma_{k}T_{k}x_{k}\Big\|_{Y}^{2}\Big)^{1/2}\leq
C\Big(\mathbb{E}\Big\|\sum_{k=1}^{n}\gamma_{k}x_{k}\Big\|_{X}^{2}\Big)^{1/2}
\end{align}
for all $n\in\N$, $T_{1},\ldots, T_{n}\in\mathcal{T}$, $x_{1},\ldots, x_{n}\in X$ and each Gaussian sequence $(\gamma_{k})_{k=1}^{n}$. The smallest such $C$ is the \emph{$\gamma$-bound} of $\mathcal{T}$ and will be denoted by $\gamma(\mathcal{T})$. Often we simply write $\gamma(\mathcal{T})<\infty$ to indicate that a collection $\mathcal{T}\subseteq\La(X,Y)$ is $\gamma$-bounded. For example, when we write $(\gamma(\{m_{k}\}))_{k}\in\ell^{\infty}$, where $m_{k}\subseteq\La(X,Y)$ for each $k\in\N_{0}$, then we implicitly mean that $m_{k}\subseteq\La(X,Y)$ is $\gamma$-bounded for each $k\in\N_{0}$. By the Kahane-Khintchine inequalities, the $\Elltwo$-norm in \eqref{gamma-boundedness} may be replaced by an $\Ellp$-norm for each $p\in[1,\infty)$.

Each $\gamma$-bounded collection $\mathcal{T}$ is uniformly bounded by $\gamma(\mathcal{T})$. Conversely, each uniformly bounded collection is $\gamma$-bounded if and only if $X$ has cotype $2$ and $Y$ has type $2$ (see \cite{Arendt-Bu02}).
If $\mathcal{T}\subseteq\La(X,Y)$ is $\gamma$-bounded and $\lambda\in[0,\infty)$, then the Kahane contraction principle implies that the strong operator topology closure of $\{z T\mid z\in\C, \abs{z}\leq \lambda, T\in\mathcal{T}\}\subseteq\La(X,Y)$ is $\gamma$-bounded, and
\begin{align}\label{Kahane contraction principle}
\gamma\Big(\overline{\{z T\mid z\in\C, \abs{z}\leq \lambda, T\in\mathcal{T}\}}^{\text{SOT}}\Big)\leq \lambda\gamma(\mathcal{T}).
\end{align}

If one replaces the Gaussian random variables in \eqref{gamma-boundedness} by Rademacher variables, one obtains an \emph{$R$-bounded} collection $\mathcal{T}\subseteq\La(X,Y)$. Each $\gamma$-bounded collection is $R$-bounded, and the converse holds if and only if $X$ has finite cotype (see \cite[Theorem 1.1]{KVW14}). However, the minimal constant $C$ in \eqref{gamma-boundedness} may depend on whether one considers $\gamma$-boundedness or $R$-boundedness. In this article we work with $\gamma$-boundedness since we will obtain results for spaces which do not have finite cotype. Moreover, the notion of $\gamma$-boundedness occurs naturally in the context of $\gamma$-radonifying operators, as evidenced by the \emph{$\gamma$-Multiplier Theorem} of \cite[Proposition 4.11]{Kalton-Weis04} (see also \cite[Theorem 5.2]{vanNeerven10}).

\begin{theorem}[$\gamma$-Multiplier Theorem]\label{gamma-multiplier theorem}
Let $(\Omega,\Sigma,\mu)$ be a measure space, $X$ and $Y$ Banach spaces and $m:\Omega\to \La(X,Y)$ an $X$-strongly measurable mapping such that $\left\{m(\omega)\mid \omega\in\Omega\right\}\subseteq\La(X,Y)$ is $\gamma$-bounded. Then $mf\in\gamma_{\infty}(\Omega;Y)$ for all $f\in\gamma(\Omega;X)$, and
\begin{align*}
\|mf\|_{\gamma(\Omega;Y)}\leq \gamma(\{m(\omega)\mid \omega\in\Omega\})\,\|f\|_{\gamma(\Omega;X)}.
\end{align*}
Moreover, if there exists a dense subset $X_{0}\subseteq X$ such that $\one_{A} m(\cdot)x\in \gamma(\Omega;Y)$ for all $x\in X_0$ and $A\in \Sigma$ with $\mu(A)<\infty$, then $mf\in\gamma(\Omega;Y)$ for all $f\in \gamma(\Omega;X)$.
\end{theorem}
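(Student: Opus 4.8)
The plan is to establish the estimate first for elementary functions $f$ --- finite sums $\sum_{i}\one_{A_{i}}\otimes x_{i}$ with the $A_{i}\in\Sigma$ pairwise disjoint of positive finite measure and $x_{i}\in X$ --- where it follows from the definition of $\gamma$-boundedness by a direct computation, and then to obtain the general case by density. As a preliminary one reduces to the situation in which $(\Omega,\Sigma,\mu)$ is $\sigma$-finite and $\Ell^{2}(\Omega)$ is separable: any $f\in\gamma(\Omega;X)$ vanishes off a $\sigma$-finite set, and one may pass to the sub-$\sigma$-algebra generated by $f$ and by the countably many strongly measurable maps $m(\cdot)x$ that occur below; these are routine exhaustion arguments. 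It is worth noting in advance that in general only membership in $\gamma_{\infty}(\Omega;Y)$ will survive the limiting procedure, which is precisely why the extra hypothesis is needed for the final assertion.

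For the core estimate, fix elementary $f=\sum_{i=1}^{N}\one_{A_{i}}\otimes x_{i}$, so that $mf=\sum_{i}\one_{A_{i}}\,m(\cdot)x_{i}$ is strongly measurable, bounded, and supported on a set of finite measure; in particular $J_{mf}\in\La(\Ell^{2}(\Omega),Y)$ and $\langle mf,y^{*}\rangle\in\Ell^{2}(\Omega)$ for every $y^{*}\in Y^{*}$. To bound $\norm{mf}_{\gamma(\Omega;Y)}$ the plan is to approximate by conditional expectations $\mathbb{E}(\,\cdot\mid\mathcal{G})$ over finite sub-$\sigma$-algebras $\mathcal{G}\subseteq\Sigma$. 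Writing $P_{\mathcal{G}}$ for the orthogonal projection in $\Ell^{2}(\Omega)$ onto the $\mathcal{G}$-measurable functions, one has $J_{\mathbb{E}(g\mid\mathcal{G})}=J_{g}P_{\mathcal{G}}$, so by the ideal property (Lemma~\ref{ideal property gamma}) $g\mapsto\mathbb{E}(g\mid\mathcal{G})$ is a contraction for $\norm{\cdot}_{\gamma(\Omega;Y)}$; conversely, approximating finite orthonormal systems in $\Ell^{2}(\Omega)$ by $\mathcal{G}$-measurable ones shows that $\norm{g}_{\gamma(\Omega;Y)}=\sup_{\mathcal{G}}\norm{\mathbb{E}(g\mid\mathcal{G})}_{\gamma(\Omega;Y)}$. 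Now take $\mathcal{G}$ generated by a finite partition $\pi$ of $\bigcup_{i}A_{i}$ refining $\{A_{1},\dots,A_{N}\}$; for $B\in\pi$ with $B\subseteq A_{i(B)}$, the (strong) Bochner average $\bar{m}_{B}:=\mu(B)^{-1}\int_{B}m\,\ud\mu$ lies in the strong-operator-topology closed absolutely convex hull of $\{m(\omega)\mid\omega\in\Omega\}$, which by a standard property of $\gamma$-boundedness has the same $\gamma$-bound. Using \eqref{gamma-norm finite-rank operators}, the definition of $\gamma$-boundedness, and the additivity of Gaussian variances --- here the key point is that, since $\pi$ refines $\{A_{i}\}$, the Gaussian sum indexed by $B$ has the same distribution as one indexed by $i$ with variances $\mu(A_{i})$ --- one obtains
\begin{align*}
\norm{\mathbb{E}(mf\mid\mathcal{G})}_{\gamma(\Omega;Y)}^{2}
&=\mathbb{E}\Big\|\sum_{B\in\pi}\gamma_{B}\,\mu(B)^{1/2}\,\bar{m}_{B}\,x_{i(B)}\Big\|_{Y}^{2}\\
&\leq \gamma(\{m(\omega)\mid\omega\in\Omega\})^{2}\,\mathbb{E}\Big\|\sum_{B\in\pi}\gamma_{B}\,\mu(B)^{1/2}\,x_{i(B)}\Big\|_{X}^{2}\\
&=\gamma(\{m(\omega)\mid\omega\in\Omega\})^{2}\,\mathbb{E}\Big\|\sum_{i=1}^{N}\gamma_{i}\,\mu(A_{i})^{1/2}\,x_{i}\Big\|_{X}^{2}
=\gamma(\{m(\omega)\mid\omega\in\Omega\})^{2}\,\norm{f}_{\gamma(\Omega;X)}^{2}.
\end{align*}
Taking the supremum over $\mathcal{G}$ gives $mf\in\gamma_{\infty}(\Omega;Y)$ together with the asserted norm bound, for elementary $f$.

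To pass to general $f\in\gamma(\Omega;X)$, I would use that the elementary functions are dense in $\gamma(\Omega;X)$ (this follows from the defining density of $\Ell^{2}(\Omega)\otimes X$ together with density of simple functions in $\Ell^{2}(\Omega)$) and pick elementary $f_{n}\to f$ in $\gamma(\Omega;X)$; by the dominated-convergence theory for $\gamma$-spaces (see \cite{vanNeerven10}) these may moreover be chosen so that $f_{n}\to f$ $\mu$-a.e. Applying the core estimate to the elementary functions $f_{n}-f_{n'}$ shows that $(mf_{n})_{n}$ is Cauchy in the Banach space $\gamma_{\infty}(\Omega;Y)$, with some limit $g$; comparing, for each $y^{*}$ in a countable norming subset of $Y^{*}$, the $\Ell^{2}(\Omega)$-convergence $\langle mf_{n},y^{*}\rangle\to\langle g,y^{*}\rangle$ with the pointwise convergence $\langle mf_{n},y^{*}\rangle\to\langle mf,y^{*}\rangle$ a.e.\ (which holds because $\norm{m(\omega)f_{n}(\omega)-m(\omega)f(\omega)}_{Y}\leq\gamma(\{m(\omega)\mid\omega\in\Omega\})\norm{f_{n}(\omega)-f(\omega)}_{X}\to 0$ a.e.) identifies $g=mf$ $\mu$-a.e. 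This proves $mf\in\gamma_{\infty}(\Omega;Y)$ with the stated norm bound. For the final statement, if $X_{0}\subseteq X$ is dense and $\one_{A}m(\cdot)x\in\gamma(\Omega;Y)$ for all $x\in X_{0}$ and all $A\in\Sigma$ of finite measure, then $mf=\sum_{i}\one_{A_{i}}\,m(\cdot)x_{i}\in\gamma(\Omega;Y)$ for every elementary $f$ with coefficients in $X_{0}$; such $f$ are still dense in $\gamma(\Omega;X)$, and since $\gamma(\Omega;Y)$ is a closed subspace of $\gamma_{\infty}(\Omega;Y)$, the limit $g$ constructed above lies in $\gamma(\Omega;Y)$.

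I expect the main obstacle to be this final identification rather than the estimate itself: since elements of $\gamma(\Omega;X)$ are only assumed strongly measurable with $\langle f,x^{*}\rangle\in\Ell^{2}(\Omega)$, there is no ambient Bochner space $\Ell^{2}(\Omega;X)$ in which to take limits, so matching the $\gamma_{\infty}$-limit $g$ with the pointwise-defined function $mf$ must be done by hand through functionals, and arranging the elementary approximants to converge both in $\gamma(\Omega;X)$-norm and $\mu$-a.e.\ relies on the (standard but not entirely trivial) dominated-convergence theory for $\gamma$-spaces. The rest is bookkeeping: the core computation rests only on the ideal property, the description \eqref{gamma-norm finite-rank operators} of $\gamma$-norms of finite-rank operators, the invariance of the $\gamma$-bound under closed absolutely convex hulls, and the elementary conditional-expectation reduction.
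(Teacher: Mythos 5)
The paper does not prove Theorem \ref{gamma-multiplier theorem}: it is quoted from the literature, with references to \cite[Proposition 4.11]{Kalton-Weis04} and \cite[Theorem 5.2]{vanNeerven10}, so there is no in-paper argument to compare against. Your proof is correct in substance. The core estimate is sound: the identity $J_{\mathbb{E}(g\mid\mathcal{G})}=J_gP_{\mathcal{G}}$ and Lemma \ref{ideal property gamma} give contractivity of conditional expectation on $\gamma_\infty$; the Bochner averages $\bar m_B$ lie in the SOT-closed absolutely convex hull of $\{m(\omega)\}$, which (a standard companion to \eqref{Kahane contraction principle}) has the same $\gamma$-bound; and the Gaussian vector $\sum_{B\in\pi}\gamma_B\mu(B)^{1/2}x_{i(B)}$ has the same distribution as $\sum_i\gamma_i\mu(A_i)^{1/2}x_i$ because variances add along each $A_i$. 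The supremum $\sup_{\mathcal{G}}\|\mathbb{E}(mf\mid\mathcal{G})\|_{\gamma_\infty}$ does recover $\|mf\|_{\gamma_\infty}$ for the bounded, finitely supported function $mf$ associated to an elementary $f$, so the bound passes from partitions to the full $\gamma_\infty$-norm. Your limiting step and the identification $T=J_{mf}$ via a countable norming set and a.e.\ convergent subsequences is the genuinely delicate point, as you note; it closes once one uses the continuity of $y^*\mapsto T^*y^*$ to extend the a.e.\ equality $T^*y^*=\langle mf,y^*\rangle$ from the countable norming set to all of $Y^*$, and the final assertion follows since $\gamma(\Ell^2(\Omega),Y)$ is closed in $\gamma_\infty(\Ell^2(\Omega),Y)$.

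As for the route: the proof in the cited survey of van Neerven proceeds dually, first approximating $m$ (not $f$) by step multipliers $\sum_j\one_{B_j}T_j$ via conditional expectation of $m$, and then establishing the bound for all finite-rank $f$ at once by testing $J_{mf}$ against an orthonormal system of $\Ell^2$ adapted to the partition $(B_j)$. Both arguments rest on exactly the same two facts --- $\gamma$-bounds pass to SOT-closed convex hulls, and Gaussian variances are additive under refinement --- and are of comparable length. Your version has the minor advantage of making the additivity-of-variances computation completely explicit on the domain side and of confining the passage to the limit to a single density step; the standard version has the minor advantage of avoiding the supremum-over-$\mathcal{G}$ characterization of the $\gamma_\infty$-norm. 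Neither is preferable in a structural sense, and yours is a valid alternative.
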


\section{Fourier multipliers\label{sec:Fourierm}}

In this section we introduce operator-valued Fourier multipliers on vector-valued function spaces. First we consider their basic properties and prove an approximation lemma which we will use later on, and then we discuss some of the specifics of Fourier multiplier operators on vector-valued Besov spaces.

\subsection{Basic properties of multipliers}\label{subsec:basic properties}

Throughout this section we fix $d\in\N$ and Banach spaces $X$ and $Y$. An $X$-strongly measurable $m:\Rd\to\La(X,Y)$ is \emph{of moderate growth at infinity} if there are a constant $\alpha\in(0,\infty)$ and a $g\in \Ell^1(\R^d)$ such that
\begin{align*}
(1+\abs{\xi})^{-\alpha} \|m(\xi)\|_{\La(X,Y)} \leq g(\xi) \qquad(\xi\in\Rd).
\end{align*}
For such an $m$ we let
\begin{align*}
T_{m}(f):=\F^{-1}(m\cdot\widehat{f}\,)\in\Sw'(\Rd;Y)\qquad(f\in\Sw(\Rd;X)).
\end{align*}
We call $T_{m}:\Sw(\Rd;X)\to\Sw'(\Rd;Y)$ the \emph{Fourier multiplier operator} associated with $m$ and we call $m$ the multiplier or the \emph{symbol} of $T_{m}$.

Let $F(\Rd;X)$ and $G(\Rd;Y)$ be function spaces such that $\Sw(\Rd;X)\cap F(\Rd;X)\subseteq F(\Rd;X)$ is dense and such that $G(\Rd;Y)\subseteq\Sw'(\Rd;Y)$. Then $m$ is a bounded \emph{$(F(\Rd;X), G(\Rd;Y))$-Fourier multiplier} if there is a constant $C\in(0,\infty)$ such that $T_{m}(f)\in G(\Rd;Y)$ and
\begin{align*}
\|T_{m}(f)\|_{G(\Rd;Y)}\leq C\|f\|_{F(\Rd;X)}
\end{align*}
for all $f\in\Sw(\Rd;X)\cap F(\Rd;X)$. In this case $T_{m}$ extends uniquely to a bounded operator from $F(\Rd;X)$ to $G(\Rd;Y)$ which will be denoted by $\widetilde{T_{m}}$, or just by $T_m$ when there is no danger of confusion. If $X=Y$ and $F(\Rd;X)=G(\Rd;Y)$ then we say that $m$ is an $F(\Rd;X)$-Fourier multiplier.

We will consider $(F(\Rd;X),G(\Rd;X))$-Fourier multipliers in the cases where $F(\Rd;X)=L^{p}(\Rd;X)$ or $F(\Rd;X)=\Be^{s}_{p,v}(\Rd;X)$ for $s\in\R$ and $p,v\in[1,\infty)$, and $G(\Rd;Y)=L^{q}(\Rd;Y)$ or $G(\Rd;Y)=\Be^{t}_{q,w}(\Rd;Y)$ for $t\in\R$ and $q,w\in[1,\infty]$. We shall also consider the case where $F(\Rd;X)=\Ell^{p}_{\Omega}(\Rd;X)$ and $G(\Rd;Y)=\Ell^{q}_{\Omega}(\Rd;Y)$ for certain $\Omega\subseteq\Rd$, as in \eqref{analytic Lp-functions}.

We also consider Fourier multipliers on homogeneous function spaces. Let $X$ and $Y$ be Banach spaces and let $m:\Rd\setminus\{0\}\to\La(X,Y)$ be $X$-strongly measurable. We say that $m:\Rd\setminus\{0\}\to\La(X,Y)$ is of \emph{moderate growth at zero and infinity} if there exist a constant $\alpha\in(0,\infty)$ and a $g\in \Ell^1(\R^d)$ such that
\begin{align*}
\abs{\xi}^{\alpha}(1+\abs{\xi})^{-2\alpha} \|m(\xi)\|_{\La(X,Y)} \leq g(\xi) \qquad(\xi\in\Rd).
\end{align*}
For such an $m$, let $\dot{T}_{m}:\dot{\Sw}(\Rd;X)\to\Sw'(\Rd;Y)$ be given by
\begin{align*}
\dot{T}_{m}(f):=\F^{-1}(m\cdot\widehat{f}\,)\qquad(f\in\dot{\Sw}(\Rd;X)),
\end{align*}
where $\dot{T}_{m}(f)\in\Sw'(\Rd;Y)$ is well-defined by definition of $\dot{\Sw}(\Rd;X)$. We use similar terminology as before to discuss the boundedness of $\dot{T}_{m}$ with respect to various homogeneous function spaces. When considering bounded $\dot{T}_{m}:\Ellp(\Rd;X)\to\Ellq(\Rd;Y)$ we will sometimes simply write $T_{m}=\dot{T}_{m}$.

In later sections we use that the space $\Ell^{p}_{\Omega}(\Rd;X)\cap \Sw(\Rd;X)=\Sw_{\Omega}(\Rd;X)$ is dense in $\Ell^{p}_{\Omega}(\Rd;X)$ for a large class of $\Omega\subseteq\Rd$. A similar result will be needed for $\gamma$-spaces.
For $\Omega\subseteq\Rd$ define
\begin{align}\label{gamma-functions with compact support}
\gamma_{\Omega}(\Rd;X):=\{f\in\gamma(\Rd;X)\mid\supp(\widehat{f}\,)\subseteq\Omega\}.
\end{align}
In order to state such a denseness result we need the following definition.
A bounded open set $\Omega\subseteq\Rd$ is said to have the \emph{segment property} if there exist $N\in\N$, open balls $B_{1},\ldots, B_{N}$ in $\Rd$ and $y_{1},\ldots, y_{N}\in\Rd$ such that $\overline{\Omega}\subseteq \cup_{k=1}^{N}B_{k}$ and
\begin{align}\label{definition_segment_property}
(\overline{\Omega}\cap B_{k})+ty_{k}\subseteq \Omega\qquad(k\in\{1,\ldots, N\},\, t\in(0,1]).
\end{align}
Note that sets of the form $(a,b)^{d}$ for $a,b\in\R$ with $a<b$, and the interior of the annuli $I_{k}$ and $J_{k}$ from \eqref{dyadic annuli} and \eqref{eq:dyadicJ} have the segment property.

The following result is known in the scalar case, cf.~\cite[Section 1.4.3]{Triebel10}.
For the reader's convenience we include a proof and additionally we consider the case of $\gamma$-spaces as well.

\begin{lemma}\label{lem:approxsmooth}
Let $X$ be a Banach space, $p\in[1,\infty)$ and let $\Omega\subseteq\Rd$ have the segment property. Then $\Sw_{\Omega}(\Rd;X)$ is dense in $\Ell^{p}_{\overline{\Omega}}(\Rd;X)$ and in $\gamma_{\overline{\Omega}}(\Rd;X)$.
\end{lemma}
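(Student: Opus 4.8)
The plan is to prove the two density statements in parallel, since the same mollification argument works for both $\Ellp$ and $\gamma$, using only the ideal property of $\gamma$-radonifying operators (Lemma \ref{ideal property gamma}) to handle the $\gamma$-case. First I would reduce to a local statement: cover $\overline{\Omega}$ by the finitely many balls $B_1,\dots,B_N$ from the segment property, and fix a smooth partition of unity $(\chi_k)_{k=1}^N$ subordinate to this cover together with a function $\chi_0\in\Ccinf(\Rd)$ which equals $1$ on $\overline{\Omega}$ and is supported in $\cup_k B_k$. Given $f\in\Ell^p_{\overline\Omega}(\Rd;X)$, write $\widehat f = \sum_{k=1}^N \chi_k\widehat f$ on a neighbourhood of $\overline\Omega$, so that $f=\sum_{k=1}^N f_k$ with $\widehat{f_k}=\chi_k\widehat f$, and it suffices to approximate each $f_k$, whose Fourier support lies in $\overline\Omega\cap \overline{B_k}$, by Schwartz functions with Fourier support in $\Omega$. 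The multiplication $\widehat f\mapsto \chi_k\widehat f$ is a Fourier multiplier with Schwartz symbol, hence bounded on both $\Ellp(\Rd;X)$ and $\gamma(\Rd;X)$ (for the latter combine \eqref{Fourier transform on gamma} with the ideal property applied to the convolution operator), so this reduction is legitimate.

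Next I would do the translation-and-mollification step on a single piece. Fix $k$ and the associated direction $y_k$ from \eqref{definition_segment_property}. For $t\in(0,1]$ let $g_t$ be the function with $\widehat{g_t}(\xi)=\widehat{f_k}(\xi)e^{-2\pi \ui t\, y_k\cdot\xi}$, i.e.\ $g_t$ is a translate of $f_k$; then $\supp(\widehat{g_t})=\supp(\widehat{f_k})-t y_k \subseteq (\overline\Omega\cap\overline{B_k})-ty_k$, which by \eqref{definition_segment_property} (applied with $-y_k$, or equivalently after replacing $y_k$ by $-y_k$ in the hypothesis — the sign is a matter of convention and I would align it so the inclusion reads into $\Omega$) sits at positive distance from $\Rd\setminus\Omega$ inside $\Omega$. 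Translation is an isometry on $\Ellp(\Rd;X)$ and, via \eqref{Fourier transform on gamma} and Lemma \ref{ideal property gamma}, also on $\gamma(\Rd;X)$, and $g_t\to f_k$ as $t\downarrow 0$ in both norms by strong continuity of translations (which holds on $\Ellp$ for $p<\infty$, and transfers to $\gamma$ by the same ideal-property/density argument since $\Sw$ is dense). Now mollify: pick $\rho\in\Ccinf(\Rd)$ with $\int\rho=1$ and $\rho_\eps(x)=\eps^{-d}\rho(x/\eps)$; then $\rho_\eps * g_t$ is Schwartz provided $g_t$ is, but $g_t$ is only in $\Ellp$ or $\gamma$. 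The cleaner route is to mollify on the Fourier side: choose $\zeta\in\Ccinf(\Rd)$ with $\zeta\equiv 1$ near $\supp(\widehat{g_t})$ and $\supp(\zeta)\subseteq\Omega$ (possible because that support is a compact subset of the open set $\Omega$), and set $h:=\F^{-1}(\zeta\cdot \F(\rho_\eps * g_t))=\F^{-1}(\zeta\,\widehat\rho(\eps\,\cdot)\,\widehat{g_t})$. Since $\zeta$ is compactly supported and smooth and $g_t$ has moderate growth on the Fourier side, $h\in\Sw_\Omega(\Rd;X)$, and as $\eps\downarrow 0$ we have $\widehat\rho(\eps\,\cdot)\to 1$ locally uniformly with uniform bound, so $h\to \F^{-1}(\zeta\,\widehat{g_t})=g_t$ in $\Ellp$ and in $\gamma$, again using that Fourier multipliers with bounded symbols that converge nicely are bounded/convergent on $\Ellp$ and, by dominated convergence for $\gamma$-radonifying operators together with the ideal property, on $\gamma$.

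Assembling: given $\delta>0$, first choose $t$ small so $\|g_t-f_k\|<\delta$, then choose $\eps$ small (depending on $t$) so $\|h-g_t\|<\delta$, yielding an element of $\Sw_\Omega(\Rd;X)$ within $2\delta$ of $f_k$; summing over $k=1,\dots,N$ and absorbing the constants from the partition-of-unity multipliers completes the proof in both the $\Ellp$ and $\gamma$ cases simultaneously. The main obstacle I anticipate is not the scalar mollification — that is classical, cf.\ \cite[Section 1.4.3]{Triebel10} — but making every operation (cutting off with $\chi_k$, translating by $t y_k$, Fourier-mollifying with $\zeta\widehat\rho(\eps\cdot)$) manifestly bounded and continuous on $\gamma_{\overline\Omega}(\Rd;X)$; the point to be careful about is that all three are Fourier multipliers whose symbols are Schwartz or boundedly convergent, so that \eqref{Fourier transform on gamma} reduces them to convolution operators on $\gamma(\Rd;X)$, which are controlled by the ideal property of Lemma \ref{ideal property gamma} with the convolution realised as composition with an operator on $L^2$, and the convergences follow from $\gamma$-dominated convergence. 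Once that bookkeeping is set up, the two cases are genuinely the same argument.
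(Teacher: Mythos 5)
Your overall plan for the $\Ellp$-case (partition of unity adapted to the segment-property cover, translate the Fourier support into $\Omega$, mollify) is exactly the paper's strategy, but two of the three steps are wrong as written, and these are genuine gaps rather than notational slips.

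First, the translation step does not translate. With your definition $\widehat{g_t}(\xi)=\widehat{f_k}(\xi)\,e^{-2\pi\ui t y_k\cdot\xi}$ the exponential is a unimodular phase \emph{in the Fourier variable}, so $\supp(\widehat{g_t})=\supp(\widehat{f_k})$ unchanged; the formula defines a translate of $f_k$ in \emph{physical} space, which does nothing for the segment property. You even note correctly that $g_t$ is a translate of $f_k$ and then, inconsistently, claim $\supp(\widehat{g_t})=\supp(\widehat{f_k})-ty_k$. Your remark about the sign being a convention is aimed at the wrong issue: the direction of $y_k$ is indeed a free choice, but no choice of sign fixes a formula that does not move the Fourier support. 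What is needed is modulation in physical space, $g_t(x):=e^{-2\pi\ui t y_k\cdot x}f_k(x)$, so that $\widehat{g_t}=\widehat{f_k}(\cdot+ty_k)$ and the Fourier support actually shifts; this is what the paper uses.

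Second, the mollification does not produce Schwartz functions. With $\rho\in\Ccinf(\Rd)$ the function $\widehat\rho$ is only Schwartz, so $\widehat\rho(\eps\cdot)\widehat{g_t}$ is a compactly supported $\Ellp$-density; its inverse Fourier transform is $\Ce^\infty$ with slow polynomial growth but need not decay (e.g.\ $\widehat{g}=\ind_{[0,1]}$ gives $g(x)\sim 1/x$). The cutoff $\zeta$ is inert because $\zeta\equiv1$ on $\supp(\widehat{g_t})$ anyway, so $h\notin\Sw(\Rd;X)$ in general and the claim that $h\in\Sw_\Omega(\Rd;X)$ fails. The repair, as in the paper, is to cut off in \emph{physical} space with $\varphi(\cdot/n)$ where $\varphi\in\Sw(\Rd)$, $\varphi(0)=1$, and $\supp(\widehat\varphi)\subseteq[-1,1]^d$: then $\varphi(\cdot/n)g_k$ is the product of a Schwartz function with a slow-growth $\Ce^\infty$ function, hence Schwartz, and $\widehat{\varphi(\cdot/n)g_k}=n^d\widehat\varphi(n\cdot)*\widehat{g_k}$ has support inside $\supp(\widehat{g_k})+[-\tfrac1n,\tfrac1n]^d\subseteq\Omega$ for $n$ large.

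Finally, for the $\gamma$-case you propose to run the same argument in the $\gamma$-norm. This is a genuinely different route from the paper: the paper sidesteps all $\gamma$-norm convergence questions by approximating $f\in\gamma_{\overline\Omega}(\Rd;X)$ first with finite-rank elements $\sum_{k\leq n}h_k\otimes J_f(h_k)$ built from an orthonormal basis of $\Ell^2_{\overline\Omega}(\Rd)$, and then replacing each $h_k$ by a $\Sw_\Omega$-approximant obtained from the (already proved) $\Ell^2$-case. Your parallel route could be made to work once the two errors above are fixed, but it needs more than the ideal property Lemma \ref{ideal property gamma}: the convergences $e^{-2\pi\ui ty_k\cdot}f_k\to f_k$ and $\varphi(\cdot/n)g_k\to g_k$ in $\gamma$-norm require either density of $\Sw$ in $\gamma$ and uniform boundedness, or the fact that $J_g\circ S_n\to J_g$ in $\gamma$ whenever $S_n\to I$ strongly on $\Ell^2$ with $\sup_n\|S_n\|<\infty$, which is what you call ``$\gamma$-dominated convergence''; this is true but deserves to be stated and justified rather than absorbed into the ideal property. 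The paper's route buys brevity and avoids this point; yours, if repaired, has the merit of treating both cases by the same mechanism.
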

\begin{proof}
Let $N\in\N$, open balls $(B_{k})_{k=1}^{N}$ and $(y_{k})_{k=1}^{N}\subseteq\Rd$ be such that $\overline{\Omega}\subseteq \bigcup_{k=1}^{N} B_{k}$ and such that \eqref{definition_segment_property} holds. Let $(\chi_{k})_{k=1}^{N}\subseteq\Sw(\Rd)$ be such that $\sum_{k=1}^{N} \widehat{\chi_{k}} = 1$ on $\overline{\Omega}$ and such that $0\leq \widehat{\chi_{k}}\leq 1$ and $\supp(\widehat{\chi_{k}})\subseteq B_{k}$ for all $k\in\{1,\ldots, N\}$.

Let $f\in \Ell^{p}_{\overline{\Omega}}(\Rd;X)$ and let $f_{k}:=\chi_{k}\ast f\in \Ellp(\Rd;X)$ for all $k\in\{1,\ldots, N\}$.
Then
\begin{align*}
\supp(\F (\ue^{-2\pi \ui t y_k \cdot} f_k))  =  \supp(\widehat{f_{k}}(\cdot + t y_k))\subseteq \Omega
\end{align*}
for all $k\in\{1,\ldots, N\}$ and $t\in (0,1]$. Moreover, by the dominated convergence theorem, $\lim_{t\downarrow 0} \ue^{-2\pi \ui t y_k \cdot} f_k = f_k$ in $\Ellp(\Rd;X)$. Let $\varepsilon>0$ and let $t\in (0,1]$ be such that $g_{k} := \ue^{-2\pi\ui t y_k \cdot} f_k$ satisfies
\begin{align*}
\|g_k  - f_k\|_{\Ellp(\Rd;X)}<\varepsilon \ \ \ \text{for all $k\in \{1, \ldots, N\}$.}
\end{align*}
Let $\varphi\in \Sw(\R^d)$ be such that $\ph(0) = 1$ and $\supp (\widehat{\varphi})\subseteq [-1,1]^d$. Let $g_{k,n}(t) := \varphi\left(\tfrac{t}{n}\right)\! g_k(t)$ for $k\in\{1,\ldots,N\}$, $n\in\N$ and $t\in\Rd$. Then $\widehat{g_{k,n}} = n^d \widehat{\varphi}(n \cdot)*\widehat{g_{k}}\in \Sw(\Rd;X)$ (see \cite[Theorem 2.3.20]{Grafakos08}) and, for all $n\in\N$ large enough, $\supp(\widehat{g_{k,n}})\subseteq \Omega$. Moreover, $g_{k,n}\to g_k$ in $\Ellp(\Rd;X)$ as $n\to \infty$, by the dominated convergence theorem. Fixing $n\in\N$ large enough we obtain
\begin{align*}
g_{k,n}\in \Sw_{\Omega}(\Rd;X) \ \ \text{and} \ \ \|g_{k,n} - g_k\|_{\Ellp(\R^d;X)}<\varepsilon
\end{align*}
for all $k\in\{1,\ldots, N\}$. Let $g := \sum_{k=1}^{N} g_{k,n}\in \Sw_{\Omega}(\Rd;X)$. Combining all these estimates and using that $f=\sum_{k=1}^{N}f_{k}$, we obtain
\begin{align*}
\|f- g\|_{p} \leq \sum_{k=1}^{N} \|f_{k} - g_{k,n}\|_{p}\leq  \sum_{k=1}^{N} \|f_k - g_{k}\|_{p} + \sum_{k=1}^{N}\|g_k - g_{k,n}\|_{p}<2N \varepsilon.
\end{align*}
Letting $\eps$ decrease to zero now yields the first statement.

Next let $f\in \gamma_{\overline{\Omega}}(\Rd;X)$ and let $\eps>0$. Let $(h_k)_{k\in\N}\subseteq
\Ell^2_{\overline{\Omega}}(\Rd)$ be an orthonormal basis for $\Ell^2_{\overline{\Omega}}(\Rd)$, and for $n\in\N$ let $g_{n} := \sum_{k=1}^n h_{k} \otimes J_{f}(h_k)\in \gamma_{\overline{\Omega}}(\Rd;X)$. Then, by \eqref{testing_orthonormal_basis},
\begin{align*}
\|f-g_{n}\|_{\gamma(\Rd;X)}&=\|J_{f}-J_{g_{n}}\|_{\gamma_{\infty}(\Ell^{2}(\Rd),X)}=\|J_{f}-J_{g_{n}}\|_{\gamma_{\infty}(\Ell^{2}_{\overline{\Omega}}(\Rd),X)}\\
&=\sup_{N\in\N}\Big(\mathbb{E}\Big\|\sum_{k=1}^{N}\gamma_{k}(J_{f}(h_{k})-J_{g}(h_{k}))\Big\|_{X}^{2}\Big)^{1/2}\\
&=\sup_{N\geq n}\Big(\mathbb{E}\Big\|\sum_{k=n}^{N}\gamma_{k}J_{f}(h_{k})\Big\|_{X}^{2}\Big)^{1/2}\to 0
\end{align*}
as $n\to\infty$. Hence it follows that we can find $n\in\N$ such that $\|f-g_{n}\|_{\gamma(\Rd;X)}<\varepsilon$. Since $(h_{k})_{k\in\N}\subseteq \Ell^2_{\overline{\Omega}}(\Rd)$ it follows from
the previous part of the proof that for each $k\in\N$ there exist $\zeta_k\in \Sw_{\Omega}(\Rd)$ such that $\|h_k - \zeta_k\|_{2}<\tfrac{\varepsilon}{n}$. Let $g:= \sum_{k=1}^n \zeta_{k}\otimes J_{f}(h_{k}) \in\Sw_{\Omega}(\Rd;X)$. Then, by \eqref{gamma-norm finite-rank operators},
\begin{align*}
\|f- g\|_{\gamma(\Rd;X)} &\leq \|f- g_n\|_{\gamma(\Rd;X)} + \|g_n- g\|_{\gamma(\Rd;X)}\\
&<\eps+\|J_{g_{n}}-J_{g}\|_{\gamma_{\infty}(\Ell^{2}(\Rd),X)}\\
&\leq \eps+\sum_{k=1}^{n}\|(h_{k}-\zeta_{k})\otimes J_{f}(h_{k})\|_{\gamma_{\infty}(\Ell^{2}(\Rd),X)}\\
&=\varepsilon + \sum_{k=1}^n \|J_{f}(h_{k})\|_{X}\, \|h_k - \zeta_k\|_{\Ell^{2}(\Rd)}<\varepsilon(1+\|J_{f}\|_{\La(\Ell^{2}(\Rd),X)}).
\end{align*}
Letting $\eps$ tend to zero concludes the proof.
\end{proof}

\subsection{Fourier multipliers on Besov spaces in an abstract setting}\label{subsec:Fourier multipliers on Besov spaces}

Fix $p\in[1,\infty)$ and $q\in[1,\infty]$. For $k\in\N_{0}$ recall the definition of $\ph_{k}\in\Sw(\Rd)$ from \eqref{Littlewood-Paley functions} and $I_{k}\subseteq\Rd$ from \eqref{dyadic annuli}.

Below we consider $X$-strongly measurable $m:\Rd\to\La(X,Y)$ of moderate growth at infinity with the following property. There exist $\beta\in\R$, $u\in[1,\infty]$ and $(c_{k})_{k\in\N_{0}}\in\ell^{u}$ such that $m$ is an $(\Ell^{p}_{I_{k}}(\Rd;X),\Ell^{q}_{I_{k}}(\Rd;Y))$-Fourier multiplier for each $k\in\N_{0}$, and
\begin{align}\label{general estimate on intervals}
\|T_{m}(f)\|_{\Ellq(\Rd;Y)}\leq 2^{k\beta}c_{k}\|f\|_{\Ellp(\Rd;X)}\qquad (f\in\Sw_{I_{k}}(\Rd;X)).
\end{align}
We will show how such an estimate can be used to obtain a Fourier multiplier result in the Besov scale.

Let $s\in\R$ and $v,w\in[1,\infty]$ be such that $\frac{1}{w}\leq \frac{1}{z} = \frac{1}{u}+\frac{1}{v}$. Note that
\begin{align}\label{convolution with Schwartz functions}
\ph\ast T_{m}(f)=\F^{-1}(\widehat{\ph}\cdot m\widehat{f}\,)=\F^{-1}(m\cdot\widehat{\ph}\widehat{f}\,)=T_{m}(\ph\ast f)
\end{align}
for all $f\in\Sw(\Rd;X)$ and $\ph\in\Sw(\Rd)$. Therefore, using the contractive inclusion $\ell^{z}\subseteq\ell^{w}$ and H\"{o}lder's inequality, we obtain
\begin{equation}\label{eq:estholderBesov}
\begin{aligned}
\|T_{m}(f)\|_{\Be^{s-\beta}_{q,w}(\Rd;Y)}&=\Big\|\Big(2^{k(s-\beta)}\|\ph_{k}\ast T_{m}(f)\|_{\Ellq(\Rd;Y)}\Big)_{k}\Big\|_{\ell^{w}}\\
&\leq \Big\|\Big(2^{k(s-\beta)}\|T_{m}(\ph_{k}\ast f)\|_{\Ellq(\Rd;Y)}\Big)_{k}\Big\|_{\ell^{z}}\\
&\leq \Big\|\Big(2^{ks}c_{k}\|\ph_{k}\ast f\|_{\Ellp(\Rd;X)}\Big)_{k}\Big\|_{\ell^{z}}
\\ & \leq \|(c_{k})_{k}\|_{\ell^{u}}\|f\|_{\Be^{s}_{p,v}(\Rd;X)}
\end{aligned}
\end{equation}
for all $f\in\Sw(\Rd;X)$. Since $\Sw(\Rd;X)$ is dense in $\Be^{s}_{p,v}(\Rd;X)$ for $v\in[1,\infty)$, \eqref{eq:estholderBesov} implies that $T_{m}$ is a bounded $(\Be^{s}_{p,v}(\Rd;X),\Be^{s-\beta}_{q,w}(\Rd;Y))$-Fourier multiplier if $v<\infty$. In the remainder of this section we discuss a method that will allow us to deal with all $v\in[1,\infty]$ simultaneously.

For $n\in\N_{0}$ denote by $T_{m}^{(n)}\in\La(\Ell^{p}_{I_{n}}(\Rd;X),\Ell^{q}_{I_{n}}(\Rd;Y))$ the unique bounded extension of $T_{m}\!\restriction_{\Sw_{I_{n}}(\Rd;X)}$, which exists by \eqref{general estimate on intervals} and Lemma \ref{lem:approxsmooth} and which has norm $\|T_{m}^{(n)}\|\leq 2^{n\beta}c_{n}$. For later use we note, as follows easily from \eqref{convolution with Schwartz functions}, that
\begin{align}\label{extension commutes with convolution}
\ph\ast T_{m}^{(n)}(g)=T_{m}^{(n)}(\ph\ast g)
\end{align}
for all $\ph\in\Sw(\Rd)$, $n\in\N_{0}$ and $g\in\Ell^{p}_{I_{n}}(\Rd;X)$. Now define, for $s\in\R$ and $v\in[1,\infty]$,
\begin{align}\label{definition extension}
\widetilde{T_{m}}(f):=\sum_{n=0}^{\infty}T_{m}^{(n)}(\ph_{n}\ast f)\qquad(f\in\Be^{s}_{p,v}(\Rd;X))
\end{align}
as a convergent series in $\Sw'(\Rd;Y)$. The following proposition shows in particular that this is well-defined.
The assumption that $m$ is of moderate growth at infinity is only made to ensure that $T_{m}:\Sw(\Rd;X)\to\Sw'(\Rd;Y)$ is well-defined.

\begin{proposition}\label{prop:multipliers_on_Besov_spaces}
Let $X$ and $Y$ be Banach spaces, $p\in[1,\infty)$ and $q\in[1,\infty)$. Let $m:\Rd\to\La(X,Y)$ be an $X$-strongly measurable map of moderate growth at infinity such that \eqref{general estimate on intervals} holds for all $k\in\N_{0}$ and for some $\beta\in\R$, $u\in[1,\infty]$ and $(c_{k})_{k\in\N_{0}}\in\ell^{u}$.
Then \eqref{definition extension} defines an extension of $T_{m}$ to a bounded linear map $\widetilde{T}_m$
from $\Be^{s}_{p,v}(\Rd;X)$ to $\Be^{s-\beta}_{q,w}(\Rd;Y)$ of norm $\|T_m\| \leq \|(c_{k})_{k}\|_{\ell^{u}}$
for all $s\in\R$ and all $v,w\in[1,\infty]$ with $\frac{1}{w}\leq \frac{1}{u}+\frac{1}{v}$.
\end{proposition}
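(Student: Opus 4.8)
The plan is to verify four things in turn: that the series in \eqref{definition extension} converges in $\Sw'(\Rd;Y)$, so that $\widetilde{T_m}$ is well-defined; that it satisfies the Littlewood--Paley identity $\ph_k\ast\widetilde{T_m}(f)=T_m^{(k)}(\ph_k\ast f)$ for every $k\in\N_0$ and $f\in\Be^s_{p,v}(\Rd;X)$; that this identity yields the asserted operator norm by the computation already carried out in \eqref{eq:estholderBesov}; and that $\widetilde{T_m}$ restricted to $\Sw(\Rd;X)$ coincides with $T_m$.

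For convergence, I would fix $f\in\Be^s_{p,v}(\Rd;X)$ and a scalar test function $h\in\Sw(\Rd)$. Since $T_m^{(n)}(\ph_n\ast f)\in\Ell^q_{I_n}(\Rd;Y)$, its Fourier transform is supported in $I_n$, and combining this with the partition-of-unity properties of $(\ph_j)_j$ and \eqref{extension commutes with convolution} one gets $T_m^{(n)}(\ph_n\ast f)=(\ph_{n-1}+\ph_n+\ph_{n+1})\ast T_m^{(n)}(\ph_n\ast f)$, hence $\lb T_m^{(n)}(\ph_n\ast f),h\rb=\lb T_m^{(n)}(\ph_n\ast f),(\ph_{n-1}+\ph_n+\ph_{n+1})\ast h\rb$ (the $\ph_j$ are even). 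Estimating the right-hand side by H\"older's inequality, then using \eqref{general estimate on intervals}, the bound $\|\ph_n\ast f\|_{\Ell^p}\le 2^{-ns}\|f\|_{\Be^s_{p,v}(\Rd;X)}$ (from the contractive embedding $\Be^s_{p,v}\subseteq\Be^s_{p,\infty}$ in \eqref{embedding between Besov spaces}), $|c_n|\le\|(c_k)_k\|_{\ell^u}$, and the rapid decay $\|\ph_j\ast h\|_{\Ell^{q'}}\le C_{h,M}2^{-jM}$ of the Littlewood--Paley pieces of $h$ (valid for any $M$ since $\Sw(\Rd)\hookrightarrow\Be^M_{q',\infty}(\Rd)$), one obtains a bound of the form $C'_{h,M}\,2^{n(\beta-s-M)}\|(c_k)_k\|_{\ell^u}\|f\|_{\Be^s_{p,v}(\Rd;X)}$ with $C'_{h,M}$ controlled by a Schwartz seminorm of $h$. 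Taking $M>\beta-s$ then makes the partial sums of \eqref{definition extension}, tested against $h$, converge absolutely in $Y$, uniformly over bounded sets of test functions; hence \eqref{definition extension} converges in $\Sw'(\Rd;Y)$.

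The main work, and the step I expect to be the obstacle, is the identity. Convolving \eqref{definition extension} with $\ph_k$ (which is continuous on $\Sw'(\Rd;Y)$) and using \eqref{extension commutes with convolution} gives $\ph_k\ast\widetilde{T_m}(f)=\sum_{n}T_m^{(n)}(\ph_k\ast\ph_n\ast f)$. Here $\ph_k\ast\ph_n\ast f=0$ unless $n\in\{k-1,k,k+1\}$, because $\widehat{\ph_k}\widehat{\ph_n}$ is a continuous function supported in $I_k\cap I_n$, which for $|k-n|\ge2$ is empty or a single sphere, hence of measure zero, so that $\widehat{\ph_k}\widehat{\ph_n}\equiv0$. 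For the three surviving indices, $\mathrm{int}(I_n\cap I_k)$ is an open annulus (or ball) and so has the segment property, whence by Lemma~\ref{lem:approxsmooth} $\Sw_{\mathrm{int}(I_n\cap I_k)}(\Rd;X)$ is dense in $\Ell^p_{I_n\cap I_k}(\Rd;X)$; since $T_m^{(n)}$ and $T_m^{(k)}$ are both bounded on $\Ell^p_{I_n\cap I_k}(\Rd;X)$ and both restrict to $T_m$ on this dense subspace, they coincide on $\Ell^p_{I_n\cap I_k}(\Rd;X)$. As $\ph_k\ast\ph_n\ast f$ has Fourier transform supported in $I_n\cap I_k$, I may therefore replace $T_m^{(n)}$ by $T_m^{(k)}$ in each term and pull $T_m^{(k)}$ out, so that $\ph_k\ast\widetilde{T_m}(f)=T_m^{(k)}\big(\sum_{n=k-1}^{k+1}\ph_k\ast\ph_n\ast f\big)=T_m^{(k)}(\ph_k\ast f)$, the last equality because $\widehat{\ph_k}(\widehat{\ph_{k-1}}+\widehat{\ph_k}+\widehat{\ph_{k+1}})=\widehat{\ph_k}$ and $\ph_k\ast f\in\Ell^p_{I_k}(\Rd;X)$. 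This reconciliation of the extensions $T_m^{(n)}$ over overlapping annuli is exactly what keeps the constant equal to $\|(c_k)_k\|_{\ell^u}$: bounding the three terms separately via $\|\ph_k\|_{\Ell^1}$ would only give the operator norm up to a factor $3\sup_k\|\ph_k\|_{\Ell^1}\,2^{|s|+|\beta|}$.

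With the identity in hand the norm bound is immediate: $\|\ph_k\ast\widetilde{T_m}(f)\|_{\Ell^q(\Rd;Y)}=\|T_m^{(k)}(\ph_k\ast f)\|_{\Ell^q(\Rd;Y)}\le2^{k\beta}c_k\|\ph_k\ast f\|_{\Ell^p(\Rd;X)}$ by \eqref{general estimate on intervals}, so inserting this into the Besov norm and repeating the chain in \eqref{eq:estholderBesov} — the contractive inclusion $\ell^z\subseteq\ell^w$ (as $\tfrac1w\le\tfrac1z$) followed by H\"older's inequality with $\tfrac1z=\tfrac1u+\tfrac1v$ — yields $\|\widetilde{T_m}(f)\|_{\Be^{s-\beta}_{q,w}(\Rd;Y)}\le\|(c_k)_k\|_{\ell^u}\|f\|_{\Be^s_{p,v}(\Rd;X)}$. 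Finally, to see that $\widetilde{T_m}$ extends $T_m$, note that for $f\in\Sw(\Rd;X)$ one has $\ph_n\ast f\in\Sw_{I_n}(\Rd;X)$, so $T_m^{(n)}(\ph_n\ast f)=T_m(\ph_n\ast f)=\ph_n\ast T_m(f)$ by \eqref{convolution with Schwartz functions}; since $\sum_{n\ge0}\ph_n\ast g=g$ in $\Sw'(\Rd;Y)$ for every $g\in\Sw'(\Rd;Y)$ (because $\sum_n\widehat{\ph_n}=1$), taking $g=T_m(f)$ shows $\widetilde{T_m}(f)=T_m(f)$.
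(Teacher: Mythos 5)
Your proof is correct, and its overall structure --- convergence of the series in $\Sw'(\Rd;Y)$, the Littlewood--Paley identity $\ph_k\ast\widetilde{T_m}(f)=T_m^{(k)}(\ph_k\ast f)$, the norm bound via the chain in \eqref{eq:estholderBesov}, and the verification that $\widetilde{T_m}$ extends $T_m$ --- matches the paper's. The one step where you take a different path is the identity. The paper picks $(f_j)_j\subseteq\Sw_{I_k}(\Rd;X)$ with $f_j\to\ph_k\ast f$ in $\Ell^p(\Rd;X)$ (Lemma \ref{lem:approxsmooth}), rewrites each surviving term $T_m^{(n)}(\ph_n\ast\ph_k\ast f)$ as $\lim_{j}T_m(\ph_n\ast f_j)$, and recombines by linearity using $\sum_{n=k-1}^{k+1}\ph_n\ast f_j=f_j$. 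You instead first prove that the extensions $T_m^{(n)}$ and $T_m^{(k)}$ coincide on $\Ell^p_{I_n\cap I_k}(\Rd;X)$ for $|n-k|\leq 1$, by observing that both agree with $T_m$ on the dense subspace $\Sw_{\mathrm{int}(I_n\cap I_k)}(\Rd;X)$ (again Lemma \ref{lem:approxsmooth}, using the segment property of the overlap annulus), and then replace $T_m^{(n)}$ by $T_m^{(k)}$ term by term. The two arguments rest on the same density lemma, so neither is more economical; your version makes explicit the gluing of the partial extensions $T_m^{(n)}$ into a single coherent operator, and you rightly note that it is this reconciliation --- rather than crude triangle-inequality estimates --- that yields the operator norm exactly $\|(c_k)_k\|_{\ell^u}$. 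Your convergence argument (via the rapid decay $\|\ph_j\ast h\|_{\Ell^{q'}(\Rd)}\leq C_{h,M}2^{-jM}$) and the paper's (which packages the decay into the factor $\|\ph\|_{\Be^{\beta-s}_{q',v'}(\Rd)}$) are the same estimate presented in two ways.
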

The extension of $T_m$ is unique for $v<\infty$, by the density of $\Sw(\R^d;X)$ in $\Be^{s}_{p,v}(\Rd;X)$. The uniqueness of the extension for $v= \infty$ is discussed in Remark \ref{rem:unique}.

\begin{proof}
Let $s\in\R$, $v\in[1,\infty]$ and $f\in\Be^{s}_{p,v}(\Rd;X)$. Then, using H\"{o}lder's inequality and the contractive inclusion $\ell^{u}\subseteq\ell^{\infty}$, there exists a constant $C\in(0,\infty)$ such that, for each $\ph\in\Sw(\Rd)$ and with the obvious modification for $v=1$,
\begin{align*}
&\sum_{n=0}^{\infty}\|\lb T_{m}^{(n)}(\ph_{n}\ast f),\ph\rb\|_{Y}=\sum_{n=0}^{\infty}\|\lb T_{m}^{(n)}(\ph_{n}\ast f),(\ph_{n-1}+\ph_{n}+\ph_{n+1})\ast\ph\rb\|_{Y}\\
\leq &\sum_{n=0}^{\infty}\|T_{m}^{(n)}(\ph_{n}\ast f)\|_{\Ellq(\Rd;Y)}\|(\ph_{n-1}+\ph_{n}+\ph_{n+1})\ast\ph\|_{\Ell^{q'}(\Rd)}\\
\leq &\sum_{n=0}^{\infty}c_{n}2^{n\beta}\|\ph_{n}\ast f\|_{\Ellp(\Rd;Y)}\|(\ph_{n-1}+\ph_{n}+\ph_{n+1})\ast\ph\|_{\Ell^{q'}(\Rd)}\\
\leq &\|(c_{n})_{n}\|_{\ell^{\infty}}\sum_{n=0}^{\infty}2^{ns}\|\ph_{n}\ast f\|_{\Ellp(\Rd;Y)}2^{n(\beta-s)}\|(\ph_{n-1}+\ph_{n}+\ph_{n+1})\ast\ph\|_{\Ell^{q'}(\Rd)}\\
\leq &\|(c_{n})_{n}\|_{\ell^{\infty}}\|f\|_{\Be^{s}_{p,v}(\Rd;X)}\Big(\sum_{n=0}^{\infty}2^{n(\beta-s)v'}\|(\ph_{n-1}+\ph_{n}+\ph_{n+1})\ast\ph\|_{\Ell^{q'}(\Rd)}^{v'}\Big)^{1/v'}\\
\leq &\|(c_{n})_{n}\|_{\ell^{\infty}}\|f\|_{\Be^{s}_{p,v}(\Rd;X)}\Big(\sum_{n=0}^{\infty}\Big(\sum_{k=n-1}^{n+1}2^{n(\beta-s)}\|\ph_{k}\ast\ph\|_{\Ell^{q'}(\Rd)}\Big)^{v'}\Big)^{1/v'}\\
\leq &C\|f\|_{\Be^{s}_{p,v}(\Rd;X)}\Big(\sum_{n=0}^{\infty}2^{n(\beta-s)v'}\|\ph_{n}\ast\ph\|_{\Ell^{q'}(\Rd)}^{v'}\Big)^{1/v'}=C\|f\|_{\Be^{s}_{p,v}(\Rd;X)}\|\ph\|_{\Be^{\beta-s}_{q',v'}(\Rd)}.
\end{align*}
Since $\Sw(\Rd)\subseteq\Be^{\beta-s}_{q',v'}(\Rd)$ continuously, $\widetilde{T}_{m}(f)=\sum_{n=0}^{\infty}T_{m}^{(n)}(\ph_{n}\ast f)$ converges as a limit in $\Sw'(\Rd;Y)$.

Now let $w\in[1,\infty]$ be such that $\tfrac{1}{w}\leq \tfrac{1}{u}+\tfrac{1}{v}$. We claim that
\begin{align*}
\ph_{k}\ast \widetilde{T_{m}}(f)=T_{m}^{(k)}(\ph_{k}\ast f)
\end{align*}
for each $k\in\N_{0}$. Indeed, by Lemma \ref{lem:approxsmooth} we can find $(f_{j})_{j\in\N}\subseteq\Sw_{I_{k}}(\Rd;X)$ such that $f_{j}\to \ph_{k}\ast f$ in $\Ellp(\Rd;X)$ as $j\to \infty$. Note that $\ph_{k}\ast \ph_{n}=0$ if $n\notin \{k-1,k,k+1\}$ and that $\sum_{n=k-1}^{k+1}\ph_{n}\ast g=g$ if $g\in\Sw'(\Rd;Y)$ is such that $\supp(\widehat{g})\subseteq I_{k}$. Therefore, using \eqref{extension commutes with convolution} and arguing in $\Sw'(\Rd;Y)$, we find
\begin{align*}
\ph_{k}\ast \widetilde{T_{m}}(f)&=\ph_{k}\ast\sum_{n=0}^{\infty}T_{m}^{(n)}(\ph_{n}\ast f)=\sum_{n=0}^{\infty}\ph_{k}\ast T_{m}^{(n)}(\ph_{n}\ast f)\\
&=\sum_{n=k-1}^{k+1}T_{m}^{(n)}(\ph_{n}\ast \ph_{k}\ast f)=\sum_{n=k-1}^{k+1}\lim_{j\to\infty}T_{m}(\ph_{n}\ast f_{j})\\
&=\lim_{j\to\infty}T_{m}\Big(\sum_{n=k-1}^{k+1}\ph_{n}\ast f_{j}\Big)=\lim_{j\to\infty}T_{m}(f_{j})=T_{m}^{(k)}(\ph_{k}\ast f),
\end{align*}
as claimed. Now the required norm bound for $\widetilde{T_m}$ follows as in \eqref{eq:estholderBesov}.

To see that $\widetilde{T_{m}}$ extends $T_{m}$, let $f\in\Sw(\Rd;X)$. Then, arguing in $\Sw'(\Rd;Y)$,
\begin{align*}
\F(\widetilde{T_{m}}(f))&=\F\Big(\sum_{n=0}^{\infty}T_{m}^{(n)}(\ph_{n}\ast f)\Big)=\sum_{n=0}^{\infty}\F(T_{m}(\ph_{n}\ast f))\\
&=\sum_{n=0}^{\infty}m\cdot \widehat{\ph_{n}}\widehat{f}=\big(\sum_{n=0}^{\infty}\widehat{\ph_{n}}\Big)m\cdot \widehat{f}=m\cdot \widehat{f}=\F(T_{m}(f)),
\end{align*}
as required.
\end{proof}

\begin{remark}[Uniqueness]\label{rem:unique}
In the case $v = \infty$, the operator $\widetilde{T_m}$ from $\Be^{s}_{p,\infty}(\Rd;X)$ into $\Be^{s-\beta}_{q,w}(\Rd;Y)$ given in
Proposition \ref{prop:multipliers_on_Besov_spaces} is also bounded from $\Be^{s-1}_{p,1}(\Rd;X)$ to $\Be^{s-\beta-1}_{q,w}(\Rd;Y)$.
On this larger space (in which $\Sw(\R^d;X)$ is dense) it is the unique extension of $T_m$.
\end{remark}

\begin{remark}\label{rem:Tmbddck}
Considering functions $f$ with suitable support one sees that the boundedness of $T_m$ also implies \eqref{general estimate on intervals} with $c_k$ replaced by $K (c_{k-1} + c_{k} + c_{k+1})$, where $K$ is a constant independent of $f$ and $(c_k)_{k\geq 0}$ and $c_{-1} = 0$. In this sense the boundedness of $T_m$ is equivalent to \eqref{general estimate on intervals}.
\end{remark}

We will also consider Fourier multipliers on homogeneous Besov spaces. Let $X$ and $Y$ be Banach spaces, $p\in[1,\infty)$ and $q\in[1,\infty]$. For $k\in\Z$ recall the definition of $\psi_{k}\in\dot{\Sw}(\Rd)$ and $J_{k}\subseteq\Rd$ from Section \ref{Besov spaces}. Let $m:\Rd\setminus\{0\}\to\La(X,Y)$ be an $X$-strongly measurable map of moderate growth at zero and infinity and with the property that there exist $\beta\in\R$, $u\in[1,\infty]$ and $(c_{k})_{k\in\Z}\in\ell^{u}(\Z)$ such that, for each $k\in\Z$, $m$ is an $(\Ell^{p}_{J_{k}}(\Rd;X),\Ell^{q}_{J_{k}}(\Rd;Y))$-Fourier multiplier and
\begin{align}\label{general homogeneous estimate on intervals}
\|\dot{T}_{m}(f)\|_{\Ellq(\Rd;Y)}\leq 2^{k\beta}c_{k}\|f\|_{\Ellp(\Rd;X)}\qquad (f\in\Sw_{J_{k}}(\Rd;X)).
\end{align}
For $n\in\Z$ denote by $\dot{T}_{m}^{(n)}\in\La(\Ell^{p}_{J_{n}}(\Rd;X),\Ell^{q}_{J_{n}}(\Rd;Y))$ the unique bounded extension of $\dot{T}_{m}\!\restriction_{\Sw_{J_{n}}(\Rd;X)}$. For $s\in\R$ and $v\in[1,\infty]$, define
\begin{align}\label{definition homogeneous extension}
\overline{T_{m}}(f):=\sum_{n=-\infty}^{\infty}\dot{T}_{m}^{(n)}(\psi_{n}\ast f)\qquad(f\in\dot{\Be}^{s}_{p,v}(\Rd;X)).
\end{align}
The following proposition is proved in the same way as Proposition \ref{prop:multipliers_on_Besov_spaces}.

\begin{proposition}\label{prop:multipliers_on_homogeneous_Besov_spaces}
Let $X$ and $Y$ be Banach spaces, $p\in[1,\infty)$ and $q\in[1,\infty)$. Let $m:\Rd\setminus\{0\}\to\La(X,Y)$ be an $X$-strongly measurable map of moderate growth at zero and infinity such that \eqref{general homogeneous estimate on intervals} holds for all $k\in\Z$ and for some $\beta\in\R$, $u\in[1,\infty]$ and $(c_{k})_{k\in\Z}\in\ell^{u}(\Z)$. Then $\overline{T_{m}}$ extends $\dot{T}_{m}$ to a bounded linear operator from $\dot{\Be}^{s}_{p,v}(\Rd;X)$ into $\dot{\Be}^{s-\beta}_{q,w}(\Rd;Y))$ with
\begin{align*}
\|\overline{T_{m}}\|_{\La(\dot{\Be}^{s}_{p,v}(\Rd;X),\dot{\Be}^{s-\beta}_{q,w}(\Rd;Y))}\leq \|(c_{k})_{k}\|_{\ell^{u}(\Z)}
\end{align*}
for all $s\in\R$ and all $v,w\in[1,\infty]$ such that $\frac{1}{w}\leq \frac{1}{u}+\frac{1}{v}$.
\end{proposition}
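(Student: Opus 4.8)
The plan is to follow the proof of Proposition~\ref{prop:multipliers_on_Besov_spaces} essentially verbatim, with the index set $\N_{0}$ replaced by $\Z$, the Littlewood--Paley pieces $\ph_{n}$ by $\psi_{n}$, the annuli $I_{n}$ by $J_{n}$, the estimate \eqref{general estimate on intervals} by \eqref{general homogeneous estimate on intervals}, and the ambient space $\Sw'(\Rd;Y)$ by $\dot{\Sw}'(\Rd;Y)$. The roles of \eqref{convolution with Schwartz functions} and \eqref{extension commutes with convolution} are played by $\psi_{k}\ast \dot{T}_{m}(f)=\dot{T}_{m}(\psi_{k}\ast f)$ for $f\in\dot{\Sw}(\Rd;X)$ and $\psi_{k}\ast \dot{T}_{m}^{(n)}(g)=\dot{T}_{m}^{(n)}(\psi_{k}\ast g)$ for $g\in\Ell^{p}_{J_{n}}(\Rd;X)$, which follow from the definitions just as before. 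Note first that each summand $\dot{T}_{m}^{(n)}(\psi_{n}\ast f)$ depends only on $\psi_{n}\ast f$, which is unchanged if $f$ is modified by a polynomial since $0\notin\supp(\widehat{\psi_{n}})$; hence \eqref{definition homogeneous extension} descends to a well-defined expression on $\dot{\Sw}'(\Rd;X)$.

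First I would show the series in \eqref{definition homogeneous extension} converges in $\dot{\Sw}'(\Rd;Y)$. Fix $s\in\R$, $v\in[1,\infty]$ and $f\in\dot{\Be}^{s}_{p,v}(\Rd;X)$. Given $\ph\in\dot{\Sw}(\Rd)$, I would insert the identity $\psi_{n-1}+\psi_{n}+\psi_{n+1}$ (which acts trivially on the Fourier support $J_{n}$ of $\dot{T}_{m}^{(n)}(\psi_{n}\ast f)$), apply H\"older's inequality in the $\Ell^{q}$--$\Ell^{q'}$ pairing on each term, use $\|\dot{T}_{m}^{(n)}\|\leq 2^{n\beta}c_{n}$, and then apply H\"older's inequality over $n\in\Z$ in the $\ell^{v}$--$\ell^{v'}$ pairing, exactly as in the proof of Proposition~\ref{prop:multipliers_on_Besov_spaces}. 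This gives
\[
\sum_{n\in\Z}\|\lb \dot{T}_{m}^{(n)}(\psi_{n}\ast f),\ph\rb\|_{Y}\leq C\,\|f\|_{\dot{\Be}^{s}_{p,v}(\Rd;X)}\,\|\ph\|_{\dot{\Be}^{\beta-s}_{q',v'}(\Rd)},
\]
and since $\dot{\Sw}(\Rd)\subseteq\dot{\Be}^{\beta-s}_{q',v'}(\Rd)$ continuously the series converges in $\dot{\Sw}'(\Rd;Y)$, which defines $\overline{T_{m}}(f)$.

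Next I would verify the Littlewood--Paley identity $\psi_{k}\ast\overline{T_{m}}(f)=\dot{T}_{m}^{(k)}(\psi_{k}\ast f)$ for each $k\in\Z$, using that $\psi_{k}\ast\psi_{n}=0$ for $|k-n|\geq 2$, that $\sum_{n=k-1}^{k+1}\psi_{n}\ast g=g$ whenever $\supp(\widehat{g})\subseteq J_{k}$, the intertwining relation above, and an approximating sequence $(f_{j})_{j}\subseteq\Sw_{J_{k}}(\Rd;X)$ with $f_{j}\to\psi_{k}\ast f$ in $\Ellp(\Rd;X)$, which exists by Lemma~\ref{lem:approxsmooth} since the interior of $J_{k}$ has the segment property. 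The norm bound $\|\overline{T_{m}}\|_{\La(\dot{\Be}^{s}_{p,v}(\Rd;X),\dot{\Be}^{s-\beta}_{q,w}(\Rd;Y))}\leq\|(c_{k})_{k}\|_{\ell^{u}(\Z)}$ then follows from the chain of inequalities in \eqref{eq:estholderBesov}, now with sums over $\Z$, the contractive inclusion $\ell^{z}\subseteq\ell^{w}$ for $\tfrac1z=\tfrac1u+\tfrac1v$, and H\"older's inequality. Finally, for $f\in\dot{\Sw}(\Rd;X)$ I would compute $\F(\overline{T_{m}}(f))=\sum_{n\in\Z}m\,\widehat{\psi_{n}}\,\widehat{f}=m\widehat{f}=\F(\dot{T}_{m}(f))$, using $\sum_{n\in\Z}\widehat{\psi_{n}}(\xi)=1$ for $\xi\neq 0$ and that $\widehat{f}$ vanishes to infinite order at $0$, so that $\overline{T_{m}}$ indeed extends $\dot{T}_{m}$.

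The main obstacle relative to the inhomogeneous case is the behaviour at low frequencies: the defining series now runs over all of $\Z$, and its convergence as $n\to-\infty$ --- both in the duality estimate and in the final norm bound --- relies on the test functions lying in $\dot{\Sw}(\Rd)$, whose Fourier transforms vanish to infinite order at the origin; this is exactly what makes the weights $2^{n(\beta-s)}\|\psi_{n}\ast\ph\|_{\Ell^{q'}(\Rd)}$ summable at $-\infty$, i.e.\ forces $\ph\in\dot{\Be}^{\beta-s}_{q',v'}(\Rd)$, and is also why $\overline{T_{m}}(f)$ is naturally an element of $\dot{\Sw}'$ rather than $\Sw'$. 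Throughout one has to read all identities modulo polynomials, which as noted is harmless for every quantity that appears.
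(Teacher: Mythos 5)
Your proposal is correct and follows exactly the route the paper indicates: the paper states only that Proposition~\ref{prop:multipliers_on_homogeneous_Besov_spaces} ``is proved in the same way as Proposition~\ref{prop:multipliers_on_Besov_spaces},'' and you carry out that transcription faithfully, replacing $\N_0$ by $\Z$, $\ph_n$ by $\psi_n$, $I_n$ by $J_n$, and $\Sw'$ by $\dot\Sw'$. Your added remarks — that each term depends only on $\psi_n\ast f$ and hence is insensitive to modification of $f$ by polynomials, that convergence as $n\to-\infty$ rests on $\widehat\ph$ vanishing to infinite order at the origin (so that $\dot\Sw(\Rd)\hookrightarrow\dot\Be^{\beta-s}_{q',v'}(\Rd)$), and that $\overline{T_m}(f)$ is therefore naturally an element of $\dot\Sw'$ rather than $\Sw'$ — are exactly the points where the homogeneous case requires care, and you handle them correctly.
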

As before, the extension $\overline{T_m}$ is unique if $v<\infty$ by the density of $\dot{\Sw}(\R^d;X)$ in $\dot{\Be}^{s}_{p,v}(\Rd;X)$.
If $v= \infty$, then one cannot argue as in Remark \ref{rem:unique}, and we leave out any uniqueness assertions in this case.

As in Remark \ref{rem:Tmbddck} one sees that the boundedness of $T_m$ is equivalent to \eqref{general homogeneous estimate on intervals}.

\begin{remark}
Using the technique of \cite[Proposition 3.4]{Rozendaal-Veraar16Fourier} one can transfer the results of Propositions \ref{prop:multipliers_on_Besov_spaces} and \ref{prop:multipliers_on_homogeneous_Besov_spaces} on $\R^d$ to the periodic setting $\T^d$. For the definition of the periodic Besov spaces we refer to \cite{Arendt-Bu-period,Triebel10}. Indeed, one can apply \eqref{general estimate on intervals} or \eqref{general homogeneous estimate on intervals} to suitable functions $f$ with compact Fourier support as in the proof of the transference result mentioned above. In particular, this yields periodic analogues of Proposition \ref{prop:BesovFourier} and Theorems \ref{main result Besov multipliers type} and \ref{main result homogeneous Besov multipliers type}. The details are left to the reader.
\end{remark}

\subsection{Fourier type setting}
In this section we use that \eqref{general estimate on intervals} holds under Fourier type conditions and apply Proposition \ref{prop:multipliers_on_Besov_spaces} to obtain a first Fourier multiplier result on Besov spaces.

A Banach space $X$ is said to have \emph{Fourier type} $p\in[1,2]$ if the Fourier transform $\F:\Ellp(\Rd;X)\to\Ellpprime(\Rd;X)$ is bounded for some (and then for all) $d\in\N$. So that our terminology is consistent with results in the rest of the paper, we say that $X$ has \emph{Fourier cotype} $q\in[2,\infty]$ if $X$ has Fourier type $q'$.

\begin{proposition}\label{prop:BesovFourier}
Let $X$ be a Banach space with Fourier type $p\in[1,2]$ and $Y$ a Banach space with Fourier cotype $q\in[2,\infty]$,
and let $r\in[1,\infty]$ be such that $\tfrac{1}{r}=\tfrac{1}{p}-\tfrac{1}{q}$.
Let $s\in\R$ and $v,w\in[1,\infty]$ be such that $\frac{1}{w}\leq \frac{1}{u}+\frac{1}{v}$.
Let $m:\Rd\to\La(X,Y)$ be an $X$-strongly measurable map such that $c_k: = \|[\xi\mapsto\norm{m(\xi)}_{\La(X,Y)}]\|_{\Ell^{r}(I_k)}<\infty$ for all $k\in \N_0$. Assume that $(c_k)_{k}\in \ell^u$ for some $u\in[1,\infty]$. Then $T_m$ extends to a bounded mapping from $\Be^{s}_{p,v}(\Rd;X)$ to $\Be^{s}_{q,w}(\Rd;Y)$ of norm $\|T_m\| \leq C\|(c_{k})_{k}\|_{\ell^{u}}$ for some $C\geq 0$ independent of $m$.
\end{proposition}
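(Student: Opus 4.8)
The plan is to reduce the statement to the single‑annulus $\Ellp$--$\Ellq$ bound \eqref{general estimate on intervals} with $\beta=0$ and then quote Proposition \ref{prop:multipliers_on_Besov_spaces}. Concretely, fix $k\in\N_{0}$ and $f\in\Sw_{I_{k}}(\Rd;X)$; I claim there is a constant $C$, depending only on the Fourier type constants of $X$ and $Y$ and in particular not on $m$, $k$ or $f$, such that
\[
\norm{T_{m}(f)}_{\Ellq(\Rd;Y)}\leq C\,c_{k}\,\norm{f}_{\Ellp(\Rd;X)}.
\]
Once this is available, Proposition \ref{prop:multipliers_on_Besov_spaces}, applied with $\beta=0$, the given $u\in[1,\infty]$ and the sequence $(C c_{k})_{k}\in\ell^{u}$, produces a bounded extension $T_{m}\colon\Be^{s}_{p,v}(\Rd;X)\to\Be^{s}_{q,w}(\Rd;Y)$ with $\norm{T_{m}}\leq C\norm{(c_{k})_{k}}_{\ell^{u}}$ for all $s\in\R$ and all $v,w\in[1,\infty]$ with $\tfrac1w\leq\tfrac1u+\tfrac1v$, which is exactly the assertion. (For $q=\infty$ one cannot quote that proposition verbatim, as it is stated for $q<\infty$; but the annulus bound feeds unchanged into the computation \eqref{eq:estholderBesov} when $v<\infty$ and into the direct‑sum construction \eqref{definition extension} when $v=\infty$, so that case needs no new idea. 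Note also that for $f\in\Sw_{I_{k}}(\Rd;X)$ the product $m\widehat{f}$ below is a well‑defined element of $\Ellqprime(\Rd;Y)$ irrespective of any growth of $m$, since $\widehat{f}$ is a compactly supported Schwartz function; this is all the construction in Proposition \ref{prop:multipliers_on_Besov_spaces} uses.)

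The annulus bound is the standard Hausdorff--Young sandwich. First, since $X$ has Fourier type $p$, the Fourier transform is bounded from $\Ellp(\Rd;X)$ to $\Ellpprime(\Rd;X)$, so $\widehat{f}\in\Ellpprime(\Rd;X)$ with $\norm{\widehat{f}}_{\Ellpprime(\Rd;X)}\leq C_{X}\norm{f}_{\Ellp(\Rd;X)}$ and $\supp(\widehat{f})\subseteq I_{k}$. Second, for $\xi\in I_{k}$ one has $\norm{m(\xi)\widehat{f}(\xi)}_{Y}\leq\norm{m(\xi)}_{\La(X,Y)}\norm{\widehat{f}(\xi)}_{X}$, and since $\tfrac1{q'}=\tfrac1r+\tfrac1{p'}$ — which is precisely the relation $\tfrac1r=\tfrac1p-\tfrac1q$ rewritten — H\"{o}lder's inequality over $I_{k}$ gives $m\widehat{f}\in\Ellqprime(\Rd;Y)$ with
\[
\norm{m\widehat{f}}_{\Ellqprime(\Rd;Y)}\leq\norm{[\xi\mapsto\norm{m(\xi)}_{\La(X,Y)}]}_{\Ellr(I_{k})}\,\norm{\widehat{f}}_{\Ellpprime(\Rd;X)}=c_{k}\,\norm{\widehat{f}}_{\Ellpprime(\Rd;X)}.
\]
Third, $Y$ has Fourier cotype $q$, i.e.\ Fourier type $q'\in[1,2]$, so $\F^{-1}$ (which differs from $\F$ only by a reflection) is bounded from $\Ellqprime(\Rd;Y)$ to $\Ellq(\Rd;Y)$; hence the distribution $T_{m}(f)=\F^{-1}(m\widehat{f})$ is in fact an $\Ellq(\Rd;Y)$‑function with $\norm{T_{m}(f)}_{\Ellq(\Rd;Y)}\leq C_{Y}\norm{m\widehat{f}}_{\Ellqprime(\Rd;Y)}$. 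Chaining the three estimates proves the claim with $C=C_{X}C_{Y}$, which is manifestly independent of $m$ (and of $s,v,w$).

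I do not expect a substantial obstacle: the argument is entirely classical. The only points that call for a little care are the endpoint exponents — $p=1$ (so $p'=\infty$), $q=\infty$ (so $q'=1$ and $r=p$), $r=\infty$ — at which each of the three steps still makes literal sense, and the separate but routine treatment of $q=\infty$ at the Besov level mentioned above; everything else is bookkeeping of constants.
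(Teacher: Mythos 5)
Your argument is correct and matches the paper's proof in structure: both reduce the claim to the single‑annulus estimate $\|T_m f\|_{L^q}\leq C c_k\|f\|_{L^p}$ for $f\in\Sw_{I_k}(\Rd;X)$ and then invoke Proposition \ref{prop:multipliers_on_Besov_spaces} (with $\beta=0$). The only difference is cosmetic: the paper cites the companion paper \cite{Rozendaal-Veraar16Fourier} for that annulus bound, while you unfold the short Hausdorff--Young sandwich $\F\colon L^p(X)\to L^{p'}(X)$, H\"older with $\tfrac1{q'}=\tfrac1r+\tfrac1{p'}$ on $I_k$, $\F^{-1}\colon L^{q'}(Y)\to L^q(Y)$ --- and you are in fact slightly more careful than the paper in flagging the endpoint $q=\infty$ at the Besov stage.
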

A similar result in the homogeneous setting follows from Proposition \ref{prop:multipliers_on_homogeneous_Besov_spaces}.
\begin{proof}
By the Fourier multiplier result of \cite{Rozendaal-Veraar16Fourier} under Fourier type conditions, there exists a constant $C\geq 0$ independent of $m$ such that
\begin{align*}
\|T_{m}(f)\|_{\Ellq(\Rd;Y)}\leq c_{k}\|f\|_{\Ellp(\Rd;X)}\qquad (f\in\Sw_{I_{k}}(\Rd;X))
\end{align*}
for all $k\in\N_{0}$. Consequently, the result follows from Proposition \ref{prop:multipliers_on_Besov_spaces}.
\end{proof}

\section{Fourier multipliers under type and cotype conditions}\label{sec:type and cotype assumptions}

In this section we prove our main results. We obtain Fourier multiplier theorems on Besov spaces under type and cotype conditions on the underlying spaces. As a corollary we derive a result for $(L^{p},L^{q})$-multipliers.

Let $X$ be a Banach space, $(\gamma_{n})_{n\in\N}$ a Gaussian sequence on a probability space $(\Omega,\mathbb{P})$ and let $p\in[1,2]$ and $q\in[2,\infty]$. We say that $X$ has \emph{(Gaussian) type} $p$ if there exists a constant $C\geq 0$ such that for all $m\in\N$ and all $x_{1},\ldots, x_{m}\in X$,
\begin{align}\label{type}
\Big(\mathbb{E}\Big\|\sum_{n=1}^{m}\gamma_{n}x_{n}\Big\|^{2}\Big)^{1/2}\leq C\Big(\sum_{n=1}^{m}\|x_{n}\|^{p}\Big)^{1/p}.
\end{align}
We say that $X$ has \emph{(Gaussian) cotype} $q$ if there exists a constant $C\geq 0$ such that for all $m\in\N$ and all $x_{1},\ldots, x_{m}\in X$,
\begin{align}\label{cotype}
\Big(\sum_{n=1}^{m}\|x_{n}\|^{q}\Big)^{1/q}\leq C\Big(\mathbb{E}\Big\|\sum_{n=1}^{m}\gamma_{n}x_{n}\Big\|^{2}\Big)^{1/2},
\end{align}
with the obvious modification for $q=\infty$. The minimal constants $C$ in \eqref{type} and \eqref{cotype} are called the \emph{(Gaussian) type $p$ constant} and the \emph{(Gaussian) cotype $q$ constant} and will be denoted by $\tau_{p,X}$ and $c_{q,X}$. We say that $X$ has \emph{nontrivial type} if $X$ has type $p\in(1,2]$ and \emph{finite cotype} if $X$ has cotype $q\in[2,\infty)$.

The Gaussian sequence in \eqref{type} and \eqref{cotype} is usually replaced by a \emph{Rademacher sequence}, i.e.\ a sequence $(r_{n})_{n\in\N}$ of independent identically distributed random variables with $\mathbb{P}(r_{1}=1)=\mathbb{P}(r_{1}=-1)=\frac{1}{2}$. This does not change the class of spaces under consideration, only the minimal constants in \eqref{type} and \eqref{cotype} (see \cite[Chapter 12]{DJT95}). We choose to work with Gaussian sequences because the Gaussian constants $\tau_{p,X}$ and $c_{q,X}$ occur naturally in the results in this section.

Each Banach space $X$ has type $p=1$ and cotype $q=\infty$, with $\tau_{1,X}=c_{\infty,X}=1$. If $X$ has type $p$ and cotype $q$ then $X$ has type $r$ with $\tau_{r,X}\leq \tau_{p,X}$ for all $r\in[1,p]$ and cotype $s$ with $c_{s,X}\leq c_{q,X}$ for all $s\in[q,\infty]$.

A Banach space with Fourier type $p\in[1,2]$ has type $p$ and cotype $p'$ (see \cite{HNVW2}). By a result of Bourgain a Banach space has nontrivial type if and only if it has nontrivial Fourier type (see \cite[5.6.30]{PieWen}).

For more on type and cotype see \cite{AK06}, \cite{DJT95}, \cite{HNVW2} and \cite[Section 9.2]{LT}.

\subsection{Functions with compact Fourier support under type and cotype conditions}\label{subsec:multipliers theorems on Besov spaces}

Fix $d\in\N$. For $X$ a Banach space, $\Omega\subseteq\Rd$ and $p\in[1,\infty]$, recall the definitions of $\Sw_{\Omega}(\Rd;X)$, $\Ell^{p}_{\Omega}(\Rd;X)$, and $\gamma_{\Omega}(\Rd;X)$ from \eqref{analytic Schwartz functions}, \eqref{analytic Lp-functions} and \eqref{gamma-functions with compact support}.
Note that distributions $f\in\Sw'(\Rd;X)$ with $\supp(\widehat{f}\,)\subseteq\Omega$ for some compact $\Omega\subseteq\Rd$ satisfy $f\in\Ce^{\infty}\!(\Rd;X)$ (see \cite[Theorem 2.3.21]{Grafakos08}).

The following lemma is a consequence of \cite[Lemma 2.1]{KvNV08}, which applies to $f\in \Sw_{[0,1]^d}(\Rd;\!X)$.

\begin{lemma}\label{compactly supported functions}
Let $X$ be a Banach space with type $p\in [1, 2]$ and cotype $q\in [2, \infty]$. Let $a,b\in\R$ with $a<b$. Then the following assertions hold.
\begin{enumerate}[$(1)$]
\item\label{item 1} $\Ell^{p}_{[a,b]^{d}}(\Rd;X)\subseteq\gamma_{[a,b]^{d}}(\Rd;X)$ and
\begin{align*}
\norm{f}_{\gamma(\Rd;X)}\leq \tau_{p,X}(b-a)^{d(\frac{1}{p}-\frac{1}{2})}\norm{f}_{\Ellp(\Rd;X)}
\end{align*}
for all $f\in\Ell^{p}_{[a,b]^{d}}(\Rd;X)$.
\item\label{item 2} $\gamma_{[a,b]^{d}}(\Rd;X)\subseteq\Ell^{q}_{[a,b]^{d}}(\Rd;X)$ and
\begin{align*}
\norm{f}_{\Ellq(\Rd;X)}\leq c_{q,X}(b-a)^{d(\frac{1}{2}-\frac{1}{q})}\norm{f}_{\gamma(\Rd;X)}.
\end{align*}
for all $f\in\gamma_{[a,b]^{d}}(\Rd;X)$.
\end{enumerate}
\end{lemma}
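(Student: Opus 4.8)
The statement is essentially a periodization/sampling result: a band-limited function with Fourier support in a cube of side length $b-a$ is determined by its values on a lattice of mesh $\sim 1/(b-a)$, and its various norms can be compared through these samples. The reference point is \cite[Lemma 2.1]{KvNV08}, which treats the unit cube $[0,1]^d$, so the whole proof should be a scaling-and-translation reduction to that case. First I would set $\lambda := b-a$ and introduce the affine change of variables: given $f\in\Ell^{p}_{[a,b]^{d}}(\Rd;X)$, put $g(t) := f(\lambda^{-1}t + c)$ for a suitable shift $c$ (or work on the Fourier side: $\widehat{g}(\xi) = \lambda^{d}e^{2\pi i\, c\cdot\xi/\lambda}\widehat{f}(\xi/\lambda + \text{shift})$), chosen so that $\supp(\widehat{g})\subseteq[0,1]^{d}$. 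Then I would record how each of the three norms — $\|\cdot\|_{\Ellp(\Rd;X)}$, $\|\cdot\|_{\gamma(\Rd;X)}$, $\|\cdot\|_{\Ellq(\Rd;X)}$ — transforms under this dilation. The $L^{p}$ and $L^{q}$ norms pick up explicit powers $\lambda^{-d/p}$ and $\lambda^{-d/q}$ by the usual change of variables; the $\gamma$-norm picks up $\lambda^{-d/2}$, which follows from \eqref{Fourier transform on gamma} together with the ideal property (Lemma \ref{ideal property gamma}) applied to the dilation operator on $L^{2}$, or directly from the dilation being a scalar multiple of an isometry on $\Ell^{2}(\Rd)$.

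The core step is then to invoke \cite[Lemma 2.1]{KvNV08} for $g$: since $X$ has type $p$ we get $\|g\|_{\gamma(\Rd;X)}\leq \tau_{p,X}\|g\|_{\Ellp(\Rd;X)}$, and since $X$ has cotype $q$ we get $\|g\|_{\Ellq(\Rd;X)}\leq c_{q,X}\|g\|_{\gamma(\Rd;X)}$. (One should check that the cited lemma is stated with the sharp constants $\tau_{p,X}$, $c_{q,X}$; if it is stated only qualitatively or with different constants, a short separate argument via the sampling formula and the definitions \eqref{type}, \eqref{cotype} is needed — but the phrasing of the excerpt strongly suggests the sharp form is available.) Substituting back the scaling factors for $g$ in terms of $f$: for part (1), $\|f\|_{\gamma} \sim \lambda^{d/2}\|g\|_{\gamma}\leq \tau_{p,X}\lambda^{d/2}\|g\|_{\Ellp} \sim \tau_{p,X}\lambda^{d/2}\lambda^{-d/p}\|f\|_{\Ellp} = \tau_{p,X}\lambda^{d(1/2-1/p)}\|f\|_{\Ellp}$; wait — this gives exponent $d(\tfrac12-\tfrac1p)$, whereas the statement has $d(\tfrac1p-\tfrac12)$, so I must be careful with the direction of the dilation (whether I compress or stretch $f$) and make sure the bookkeeping produces $(b-a)^{d(1/p-1/2)}$ exactly; this sign bookkeeping is the one genuinely error-prone spot and I would track it by testing on a simple example (e.g.\ $f$ an indicator-type bump) before committing. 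Part (2) is handled identically: $\|f\|_{\Ellq}\sim\lambda^{-d/q}\|g\|_{\Ellq}\leq c_{q,X}\lambda^{-d/q}\|g\|_{\gamma}\sim c_{q,X}\lambda^{-d/q}\lambda^{-d/2}\|f\|_{\gamma}$, again to be reconciled with the stated exponent $d(\tfrac12-\tfrac1q)$ by fixing the dilation convention consistently across both parts.

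**Main obstacle.** The only real subtlety is getting the constants and the powers of $b-a$ exactly right — i.e.\ verifying that the dilation conventions in \cite[Lemma 2.1]{KvNV08} match what is needed here, and that the scaling exponents $d(\tfrac1p-\tfrac12)$ and $d(\tfrac12-\tfrac1q)$ come out with the correct sign. The inclusions $\Ell^{p}_{[a,b]^{d}}\subseteq\gamma_{[a,b]^{d}}$ and $\gamma_{[a,b]^{d}}\subseteq\Ell^{q}_{[a,b]^{d}}$ themselves are immediate once the norm inequalities are in place: membership in $\gamma(\Rd;X)$ (not merely $\gamma_{\infty}$) follows because Schwartz functions with Fourier support in the cube are dense in $\Ell^{p}_{[a,b]^{d}}$ by Lemma \ref{lem:approxsmooth}, and on such dense sets the function lies in $\gamma(\Rd;X)$ via \eqref{embedding Schwartz functions and gamma-functions}, so the norm bound extends by completeness. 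I would write the proof in three short moves: (i) reduce to $[0,1]^{d}$ by dilation, recording the three scaling identities; (ii) quote \cite[Lemma 2.1]{KvNV08}; (iii) undo the scaling and read off the constants, with a line confirming the membership assertions via density.
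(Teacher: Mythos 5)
Your approach is the same as the paper's: scale $f$ to a function $g$ with Fourier support in a fixed unit cube, apply \cite[Lemma 2.1]{KvNV08} to $g$, undo the scaling, and extend to $\Ell^{p}_{[a,b]^{d}}$ by density. Your sign worry resolves as you anticipate: with the paper's substitution $g(t)=\ue^{-2\pi\ui\frac{b+a}{2(b-a)}t}f(\tfrac{t}{b-a})$ one gets $\|g\|_{\Ellp}=(b-a)^{d/p}\|f\|_{\Ellp}$ and $\|g\|_{\gamma}=(b-a)^{d/2}\|f\|_{\gamma}$, and dividing the two gives the exponent $d(\tfrac1p-\tfrac12)$ with the correct sign.

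There is one point you gloss over, and it is the only part of the paper's argument that takes more than a line: passing from $\Sw_{[a,b]^d}$ to $\Ell^p_{[a,b]^d}$. You write that ``the norm bound extends by completeness,'' but completeness of $\gamma(\Ell^2(\Rd),X)$ only gives you that the Cauchy sequence $(J_{f_n})$ converges to \emph{some} operator $T$; you still have to prove $T=J_f$, i.e.\ that the limit operator really is the integration operator associated to the $\Ellp$-limit $f$. The paper does this by testing: for $x^*\in X^*$, $(J_{f_n})^*x^*=x^*\circ f_n$ converges both to $T^*x^*$ in $\Elltwo$ and to $x^*\circ f=(J_f)^*x^*$ in $\Ellp$, and a.e.-convergent subsequences identify the two, hence $T=J_f$. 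This identification is needed for the conclusion ``$f\in\gamma_{[a,b]^d}(\Rd;X)$'' in part (1); a symmetric issue appears in part (2) when showing the $\Ellq$-limit of approximants equals $f$. So the outline is right, but this identification step should be spelled out rather than subsumed under ``completeness.''
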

\begin{proof}
\eqref{item 1} First assume that $f\in \Sw_{[a,b]^{d}}(\Rd;X)$.
Let $g(t):=\ue^{-2\pi i \frac{b+a}{2(b-a)}t}f(\tfrac{t}{b-a})$ for $t\in\Rd$. Then $g\in\Sw_{[-\frac{1}{2},\frac{1}{2}]^{d}}(\Rd;X)$ and, by Lemma \ref{ideal property gamma},
\begin{align*}
\norm{g}_{\Ellp(\Rd;X)}=(b-a)^{\frac{d}{p}}\norm{f}_{\Ellp(\Rd;X)} \ \ \text{and} \ \ \norm{g}_{\gamma(\Rd;X)}=(b-a)^{\frac{d}{2}}\norm{f}_{\gamma(\Rd;X)}.
\end{align*}
By \cite[Lemma 2.1]{KvNV08} (note that $\F$ is normalized differently in \cite{KvNV08}),
\begin{align*}
(b-a)^{\frac{d}{2}}\norm{f}_{\gamma(\Rd;X)}&=\norm{g}_{\gamma(\Rd;X)}\leq \tau_{p,X}\norm{g}_{\Ellp(\Rd;X)}=\tau_{p,X}(b-a)^{\frac{d}{p}}\norm{f}_{\Ellp(\Rd;X)}.
\end{align*}
For a general $f\in \Ell^{p}_{[a,b]^{d}}(\Rd;X)$, let $(f_n)_{n\in\N}\subseteq\Sw_{[a,b]^{d}}(\Rd;X)$ be such that $\|f_{n}-f\|_{\Ellp(\Rd;X)}\to 0$, as in Lemma \ref{lem:approxsmooth}. By the previous estimate,
\begin{align}\label{eq:estndep}
\norm{f_n}_{\gamma(\Rd;X)} \leq \tau_{p,X}(b-a)^{d(\frac{1}{p}-\frac{1}{2})}\norm{f_n}_{\Ellp(\Rd;X)}
\end{align}
for each $n\in\N$. Since the same estimate holds with $f_{n}$ replaced by $f_n - f_m$ for $m\in\N$ with $m\geq n$, it follows that $(f_n)_{n\in\N}$ is a Cauchy sequence in $\gamma(\Rd;X)$. Therefore, with $J_{f_{n}}$ as in \eqref{definition integration operator} for each $n\in\N$, $(J_{f_{n}})_{n\in\N}$ converges to some operator $T\in\gamma(\Ell^{2}(\Rd), X)$ as $n\to\infty$. We claim that $T=J_{f}$. Indeed, fix $x^*\in X^*$. Then $(J_{f_n})^{*}x^{*}\to T^*x^*$ in $\Ell^2(\R^d)$. It is straightforward to check that $(J_{f_{n}})^{*}x^{*}=x^{*}\circ f_{n}$ for each $n\in\N$. Moreover, also $(J_{f_{n}})^{*}x^{*}=x^{*}\circ f_{n}\to x^{*}\circ f=(J_{f})^{*}x^{*}$ in $\Ellp(\Rd)$ as $n\to\infty$. Choosing appropriate almost everywhere convergent subsequences we find that $(J_{f})^{*}x^{*}= T^* x^*$, which yields the claim. The required estimate now follows by letting $n\to \infty$ in \eqref{eq:estndep}.

\eqref{item 2} is proved in the same manner.
\end{proof}

The main result of this section is a consequence of the following proposition, which is of independent interest.

\begin{proposition}\label{compact Fourier support multipliers}
Let $X$ be a Banach space with type $p\in[1,2]$ and $Y$ a Banach space with cotype $q\in[2,\infty]$, and let $r\in[1,\infty]$ be such that $\tfrac{1}{r}=\tfrac{1}{p}-\tfrac{1}{q}$. Let $a,b\in\R$ with $a<b$ and let $m:\Rd\to\La(X,Y)$ be an $X$-strongly measurable map such that $\{m(\xi)\!\mid\! \xi\in[a,b]^{d}\}\subseteq\La(X,Y)$ is $\gamma$-bounded. Then there exists a unique bounded operator $T\in\La(\Ell^{p}_{[a,b]^{d}}(\Rd;X),\Ell^{q}_{[a,b]^{d}}(\Rd;Y))$ such that
\begin{align*}
T(f)=\F^{-1}(m\cdot\widehat{f})
\end{align*}
for each $f\in \Sw_{[a,b]^{d}}(\Rd;X)$. Moreover,
\begin{align*}
\|T(f)\|_{\Ellq(\Rd;Y)}\leq \tau_{p,X}c_{q,Y}(b-a)^{d/r}\gamma(\{m(\xi)\mid \xi\in[a,b]^{d}\})\,\|f\|_{\Ellp(\Rd;X)}
\end{align*}
for all $f\in\Ell^{p}_{[a,b]^{d}}(\Rd;X)$.
\end{proposition}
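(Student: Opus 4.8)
The plan is to route the estimate through the spaces of $\gamma$-radonifying functions: on the Fourier side $T_{m}$ is pointwise multiplication by the $\gamma$-bounded family $\{m(\xi)\mid\xi\in[a,b]^{d}\}$, which the $\gamma$-Multiplier Theorem controls, while Lemma \ref{compactly supported functions} converts between $\Ellp$, $\gamma$ and $\Ellq$ for band-limited functions, and \eqref{Fourier transform on gamma} says the Fourier transform is isometric on $\gamma(\Rd;\cdot)$. Concretely, I would fix $f\in\Sw_{[a,b]^{d}}(\Rd;X)$ and chain four estimates. By Lemma \ref{compactly supported functions}(1), $\|f\|_{\gamma(\Rd;X)}\leq\tau_{p,X}(b-a)^{d(\frac1p-\frac12)}\|f\|_{\Ellp(\Rd;X)}$, and by \eqref{Fourier transform on gamma} the left-hand side equals $\|\widehat f\|_{\gamma(\Rd;X)}$. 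Since $\widehat f$ vanishes off $[a,b]^{d}$, the restriction $\Elltwo(\Rd)\to\Elltwo([a,b]^{d})$ and the zero-extension in the other direction are contractive, so the ideal property of Lemma \ref{ideal property gamma} forces $\|\widehat f\|_{\gamma(\Rd;X)}=\|\widehat f\restriction_{[a,b]^{d}}\|_{\gamma([a,b]^{d};X)}$. Applying the $\gamma$-Multiplier Theorem \ref{gamma-multiplier theorem} on the finite measure space $[a,b]^{d}$ yields $m\widehat f\in\gamma_{\infty}([a,b]^{d};Y)$ with $\|m\widehat f\|_{\gamma([a,b]^{d};Y)}\leq\gamma(\{m(\xi)\mid\xi\in[a,b]^{d}\})\,\|\widehat f\restriction_{[a,b]^{d}}\|_{\gamma([a,b]^{d};X)}$. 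Reinterpreting $m\widehat f$ as a function on $\Rd$ vanishing off $[a,b]^{d}$ and running the previous identifications backwards, $\F^{-1}(m\widehat f)=T_{m}f$ lies in $\gamma(\Rd;Y)$ with $\|T_{m}f\|_{\gamma(\Rd;Y)}=\|m\widehat f\|_{\gamma([a,b]^{d};Y)}$ and has Fourier support in $[a,b]^{d}$; hence Lemma \ref{compactly supported functions}(2) gives $\|T_{m}f\|_{\Ellq(\Rd;Y)}\leq c_{q,Y}(b-a)^{d(\frac12-\frac1q)}\|T_{m}f\|_{\gamma(\Rd;Y)}$. Multiplying these and using $(\tfrac12-\tfrac1q)+(\tfrac1p-\tfrac12)=\tfrac1r$ produces exactly the asserted inequality on $\Sw_{[a,b]^{d}}(\Rd;X)$.

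To finish, I would invoke Lemma \ref{lem:approxsmooth}: the open cube $(a,b)^{d}$ has the segment property and its closure is $[a,b]^{d}$, so $\Sw_{(a,b)^{d}}(\Rd;X)\subseteq\Sw_{[a,b]^{d}}(\Rd;X)$ is dense in $\Ell^{p}_{[a,b]^{d}}(\Rd;X)$. Since $\Ell^{q}_{[a,b]^{d}}(\Rd;Y)$ is a Banach space, the estimate above produces a unique bounded extension $T\in\La(\Ell^{p}_{[a,b]^{d}}(\Rd;X),\Ell^{q}_{[a,b]^{d}}(\Rd;Y))$ which by construction equals $\F^{-1}(m\cdot\widehat f)$ on $\Sw_{[a,b]^{d}}(\Rd;X)$ and satisfies the stated bound on all of $\Ell^{p}_{[a,b]^{d}}(\Rd;X)$.

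The one step that is more than bookkeeping is reconciling the $\gamma$-Multiplier Theorem, which a priori only places $m\widehat f$ in $\gamma_{\infty}$, with Lemma \ref{compactly supported functions}(2), which is phrased for $\gamma$-radonifying functions. For $q<\infty$ this is harmless, since then $Y$ has finite cotype, hence contains no copy of $c_{0}$, so $\gamma_{\infty}([a,b]^{d};Y)=\gamma([a,b]^{d};Y)$ and the identifications above go through verbatim. For $q=\infty$ (where $c_{\infty,Y}=1$ and $r=p$) I would bypass the radonifying property by estimating pointwise: for $x\in\Rd$ one has $\F^{-1}(m\widehat f)(x)=J_{m\widehat f}(e_{x})$ with $e_{x}(\xi):=\ue^{2\pi\ui x\cdot\xi}\one_{[a,b]^{d}}(\xi)$, so $\|\F^{-1}(m\widehat f)(x)\|_{Y}\leq\|e_{x}\|_{\Elltwo}\,\|m\widehat f\|_{\gamma_{\infty}([a,b]^{d};Y)}=(b-a)^{d/2}\|m\widehat f\|_{\gamma_{\infty}([a,b]^{d};Y)}$, using that the $\gamma_{\infty}$-norm dominates the operator norm; together with the bound on $\|m\widehat f\|_{\gamma_{\infty}}$ from the $\gamma$-Multiplier Theorem this recovers the claim in the remaining case. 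I expect this finite-cotype dichotomy to be the only genuine obstacle; everything else is a direct concatenation of the cited results.
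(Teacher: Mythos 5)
Your chain of inequalities is the same as the paper's, but you handle the central technical obstacle — that the $\gamma$-Multiplier Theorem a priori only produces $m\widehat f\in\gamma_{\infty}$, whereas Lemma \ref{compactly supported functions}\eqref{item 2} is stated for $\gamma$-radonifying functions — by a genuinely different route. The paper first reduces to the case where $\xi\mapsto m(\xi)x$ is smooth and compactly supported for every $x\in X$; then the ``Moreover'' clause of Theorem \ref{gamma-multiplier theorem} (with $X_{0}=X$) gives $m\widehat f\in\gamma(\Rd;Y)$ directly, and the general case is recovered by mollifying $m$, controlling $\gamma(\{m_{n}(\xi)\})$ via the Kahane contraction principle \eqref{Kahane contraction principle}, and passing to the limit with the Fatou-type lemma [Lemma 3.1, Rozendaal-Veraar16Fourier]. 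You instead appeal to the Hoffmann-J{\o}rgensen--Kwapie\'n theorem that $\gamma_{\infty}(H,Y)=\gamma(H,Y)$ whenever $Y$ does not contain $c_{0}$ — which holds when $q<\infty$ since finite cotype excludes $c_{0}$ — and for $q=\infty$ you replace Lemma \ref{compactly supported functions}\eqref{item 2} by the direct pointwise estimate $\|J_{m\widehat f}(e_{x})\|_{Y}\leq\|e_{x}\|_{2}\|m\widehat f\|_{\gamma_{\infty}}$, which is in fact how that lemma is proved anyway. Both approaches are correct. Yours is shorter and avoids the approximation-and-limit machinery, but it relies on the $\gamma_{\infty}=\gamma$ equivalence for finite-cotype targets, a result not recorded in the paper's preliminaries (it is standard, e.g.\ \cite{vanNeerven10}). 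The paper's mollification argument is longer but self-contained within the toolkit it has set up, and it treats all $q\in[2,\infty]$ uniformly, whereas you must split into the cases $q<\infty$ and $q=\infty$.
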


It follows from an example in \cite{Rozendaal-Veraar16Fourier} that in certain cases the $\gamma$-boundedness condition is necessary in Proposition \ref{compact Fourier support multipliers} and in the results that follow from it.

\begin{proof}
By Lemma \ref{lem:approxsmooth} it suffices to show that $\F^{-1}(m\cdot\widehat{f}\,)\in\Ellq(\Rd;Y)$ with
\begin{align*}
\|\F^{-1}(m\cdot\widehat{f}\,)\|_{\Ellq(\Rd;Y)}
\leq \tau_{p,X}c_{q,Y}(b-a)^{d/r}\gamma(\{m(\xi)\mid \xi\in[a,b]^{d}\})\,\|f\|_{\Ellp(\Rd;X)}
\end{align*}
for all $f\in\Sw_{[a,b]^{d}}(\Rd;X)$. To this end, fix $f\in\Sw_{[a,b]^{d}}(\Rd;X)$ and first assume that $[\xi\mapsto m(\xi)x]\in\Ce^{\infty}_{\text{c}}\!(\Rd;Y)$ for all $x\in X$. Then in fact $m(\cdot)x\in\gamma(\Rd;Y)$, by \eqref{embedding Schwartz functions and gamma-functions}, and by Lemma \ref{ideal property gamma} also $\ind_{[a,b]^{d}}(\cdot)m(\cdot)x\in\gamma(\Rd;Y)$ for all $x\in X$. Now use Lemma \ref{compactly supported functions}, \eqref{Fourier transform on gamma}, Theorem \ref{gamma-multiplier theorem}, \eqref{Fourier transform on gamma} and Lemma \ref{compactly supported functions} in sequence to obtain that $\F^{-1}(m\cdot \widehat{f}\,)\in\Ellq(\Rd;Y)$ with
\begin{align*}
\|\F^{-1}(m\cdot \widehat{f}\,)\|_{\Ellq(\Rd;Y)}&\leq c_{q,Y}(b-a)^{d(\frac{1}{2}-\frac{1}{q})}\|\F^{-1}(m\cdot \widehat{f}\,)\|_{\gamma(\Rd;Y)}\\
&=c_{q,Y}(b-a)^{d(\frac{1}{2}-\frac{1}{q})}\|m\cdot \widehat{f}\|_{\gamma(\Rd;Y)}\\
&= c_{q,Y}(b-a)^{d(\frac{1}{2}-\frac{1}{q})}\|\ind_{[a,b]^{d}}m\cdot \widehat{f}\|_{\gamma(\Rd;Y)}\\
&\leq c_{q,Y}(b-a)^{d(\frac{1}{2}-\frac{1}{q})}\gamma(\{m(\xi)\mid \xi\in[a,b]^{d}\})\|\widehat{f}\|_{\gamma(\Rd;X)}\\
&=c_{q,Y}(b-a)^{d(\frac{1}{2}-\frac{1}{q})}\gamma(\{m(\xi)\mid \xi\in[a,b]^{d}\})\|f\|_{\gamma(\Rd;X)}\\
&\leq \tau_{p,X}c_{q,Y}(b-a)^{d(\frac{1}{p}-\frac{1}{q})}\gamma(\{m(\xi)\mid \xi\in[a,b]^{d}\})\|f\|_{\Ellp(\Rd;X)},
\end{align*}
as required.

Now let $m:\Rd\to\La(X,Y)$ be a general $X$-strongly measurable map such that $\{m(\xi)\mid \xi\in[a,b]^{d}\})\subseteq\La(X,Y)$ is $\gamma$-bounded. Since $\F^{-1}(m\cdot \widehat{f}\,)=\F^{-1}(\ind_{[a,b]^{d}}m\cdot \widehat{f}\,)$, we may assume that $\supp(m)\subseteq[a,b]^{d}$. Let $(h_{n})_{n\in\N}\subseteq\Ce_{\text{c}}^{\infty}\!(\Rd)$ be an approximate identity with $\norm{h_{n}}_{\Ell^{1}\!(\Rd)}\leq 1$ for all $n\in\N$, and define $m_{n}(\xi)x:=(h_{n}\ast m(\cdot)x)(\xi)$ for $n\in\N$, $x\in X$ and $\xi\in\Rd$. Then $[\xi\mapsto m_{n}(\xi)x]\in\Ce^{\infty}_{\text{c}}\!(\Rd;Y)$ and $m(\xi)x=\lim_{n\to\infty}m_{n}(\xi)x$ for all $x\in X$ and almost all $\xi\in\Rd$. Moreover,
\begin{align*}
\gamma(\{m_{n}(\xi)\mid \xi\in\Rd\})\leq \gamma(\{m(\xi)\mid \xi\in\Rd\})
\end{align*}
for each $n\in\N$, by \eqref{Kahane contraction principle} (see also \cite[Corollary 2.14]{Kunstmann-Weis04}). In particular,
\begin{align*}
\sup_{\xi\in\Rd}\sup_{n\in\N}\|m_{n}(\xi)\|_{\La(X,Y)}<\infty.
\end{align*}
Now, by what we have already shown and by \cite[Lemma 3.1]{Rozendaal-Veraar16Fourier}, $T_{m}(f)\in\Ellq(\Rd;Y)$ with
\begin{align*}
\|T_{m}(f)\|_{\Ellq(\Rd;Y)}&\leq \liminf_{n\to\infty}\|T_{m_{n}}(f)\|_{\Ellq(\Rd;Y)}\\
&\leq \tau_{p,X}c_{q,Y}(b-a)^{d/r}\gamma(\{m(\xi)\mid\xi\in[a,b]^{d}\})\|f\|_{\Ellp(\Rd;X)},
\end{align*}
which concludes the proof.
\end{proof}

\subsection{Multipliers on Besov spaces under type and cotype assumptions}
If $m:\Rd\to\La(X,Y)$ is an $X$-strongly measurable map of moderate growth at infinity such that $\{m(\xi)\mid \xi\in I_{n}\}\subseteq\La(X,Y)$ is $\gamma$-bounded for each $n\in\N_{0}$, then by Proposition \ref{compact Fourier support multipliers} (applied to $\ind_{I_{n}}m$) $m$ is an $(\Ell^{p}_{I_{n}}(\Rd;X),\Ell^{q}_{I_{n}}(\Rd;Y))$-Fourier multiplier for each $n\in\N_{0}$. As in Section \ref{subsec:Fourier multipliers on Besov spaces}, let $T_{m}^{(n)}\in\La(\Ell^{p}_{I_{n}}(\Rd;X),\Ell^{q}_{I_{n}}(\Rd;Y))$ be the unique bounded extension of $T_{m}\!\restriction_{\Sw_{I_{n}}(\Rd;X)}$ to $\Ell^{p}_{I_{n}}(\Rd;X)$. Recall that in \eqref{definition extension} we defined, for $s\in\R$ and $v\in[1,\infty]$,
\begin{align}\label{definition extension (co)type}
\widetilde{T_{m}}(f):=\sum_{n=0}^{\infty}T_{m}^{(n)}(\ph_{n}\ast f)\qquad(f\in\Be^{s}_{p,v}(\Rd;X))
\end{align}
as a limit in $\Sw'(\Rd;Y)$. The following result was already stated in the Introduction as Theorem \ref{main result Besov multipliers typeintro}.

\begin{theorem}\label{main result Besov multipliers type}
Let $X$ be a Banach space with type $p\in[1,2]$ and $Y$ a Banach space with cotype $q\in[2,\infty]$, and let $r\in[1,\infty]$ be such that $\tfrac{1}{r}=\tfrac{1}{p}-\tfrac{1}{q}$. Let $m:\Rd\to\La(X,Y)$ be an $X$-strongly measurable map such that $\left(2^{k\sigma}\gamma(\{m(\xi)\!\mid\! \xi\in I_{k}\})\right)_{k\in\N_{0}}\in\ell^{u}$ for some $\sigma\in\R$ and $u\in[1,\infty]$. Then the operator $\widetilde{T_{m}}$ defined by \eqref{definition extension (co)type} extends $T_{m}$ to a bounded linear map $\widetilde{T_{m}}:\Be^{s}_{p,v}(\Rd;X) \to \Be^{s+\sigma-d/r}_{q,w}(\Rd;Y)$
with
\begin{align*}
\|\widetilde{T_{m}}\|_{\La(\Be^{s}_{p,v}(\Rd;X),\Be^{s+\sigma-d/r}_{q,w}(\Rd;Y))}\!\leq 4^{d/r} \tau_{p,X}c_{q,Y}\norm{\Big(2^{k\sigma}\gamma(\{m(\xi)\mid \xi\in I_{k}\})\Big)_{k}}_{\ell^{u}}
\end{align*}
for all $s\in\R$ and all $v,w\in[1,\infty]$ with $\frac{1}{w}\leq \frac{1}{u}+\frac{1}{v}$.
\end{theorem}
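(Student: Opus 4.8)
The plan is to obtain the theorem by combining two results already established in the excerpt: Proposition~\ref{compact Fourier support multipliers}, which provides the single-annulus estimate \eqref{general estimate on intervals}, and Proposition~\ref{prop:multipliers_on_Besov_spaces}, which lifts any such family of annulus-wise estimates to a bounded Fourier multiplier on the Besov scale. Thus there is essentially nothing to do beyond checking that the hypotheses of the latter proposition hold with the right parameters and tracking constants. First I would fix $k\in\N_{0}$ and record the geometric observation that the dyadic annulus $I_{k}$ from \eqref{dyadic annuli} is contained in the cube $[-2^{k+1},2^{k+1}]^{d}$ of side length $b-a=2^{k+2}=4\cdot 2^{k}$ (for $k=0$ one has $I_{0}\subseteq[-2,2]^{d}$, again of side length $4=4\cdot 2^{0}$).

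Next I would apply Proposition~\ref{compact Fourier support multipliers} to the truncated symbol $\ind_{I_{k}}m$ on this cube. Writing $\gamma_{k}:=\gamma(\{m(\xi)\mid\xi\in I_{k}\})$, the family $\{(\ind_{I_{k}}m)(\xi)\mid\xi\in[-2^{k+1},2^{k+1}]^{d}\}=\{m(\xi)\mid\xi\in I_{k}\}\cup\{0\}$ is $\gamma$-bounded with $\gamma$-bound $\gamma_{k}$, and for $f\in\Sw_{I_{k}}(\Rd;X)$ we have $\F^{-1}(\ind_{I_{k}}m\cdot\wh f\,)=\F^{-1}(m\cdot\wh f\,)=T_{m}(f)$ since $\supp(\wh f\,)\subseteq I_{k}$. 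Hence Proposition~\ref{compact Fourier support multipliers} yields
\[
\|T_{m}(f)\|_{\Ellq(\Rd;Y)}\le\tau_{p,X}c_{q,Y}(4\cdot 2^{k})^{d/r}\gamma_{k}\,\|f\|_{\Ellp(\Rd;X)}=4^{d/r}\tau_{p,X}c_{q,Y}\,2^{kd/r}\gamma_{k}\,\|f\|_{\Ellp(\Rd;X)}
\]
for all $f\in\Sw_{I_{k}}(\Rd;X)$. Setting $\beta:=\tfrac{d}{r}-\sigma$ and $c_{k}:=4^{d/r}\tau_{p,X}c_{q,Y}\,2^{k\sigma}\gamma_{k}$, this is exactly \eqref{general estimate on intervals}, and by hypothesis $(c_{k})_{k}\in\ell^{u}$ with $\|(c_{k})_{k}\|_{\ell^{u}}=4^{d/r}\tau_{p,X}c_{q,Y}\|(2^{k\sigma}\gamma_{k})_{k}\|_{\ell^{u}}$.

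Before invoking Proposition~\ref{prop:multipliers_on_Besov_spaces} I would verify that $m$ is of moderate growth at infinity, which that proposition requires only to ensure $T_{m}:\Sw(\Rd;X)\to\Sw'(\Rd;Y)$ is well defined: since $(2^{k\sigma}\gamma_{k})_{k}\in\ell^{u}\subseteq\ell^{\infty}$, each $\gamma$-bounded family $\{m(\xi)\mid\xi\in I_{k}\}$ is uniformly bounded by $\gamma_{k}\le C2^{-k\sigma}$, so $\|m(\xi)\|_{\La(X,Y)}$ grows at most polynomially in $\abs{\xi}$ and $(1+\abs{\xi})^{-\alpha}\|m(\xi)\|_{\La(X,Y)}\in\Ell^{1}(\Rd)$ for $\alpha$ large enough. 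Then Proposition~\ref{prop:multipliers_on_Besov_spaces}, applied with this $\beta$, $u$ and $(c_{k})_{k}$, shows that the operator $\widetilde{T_{m}}$ of \eqref{definition extension (co)type} (which is precisely the extension constructed there) is bounded from $\Be^{s}_{p,v}(\Rd;X)$ to $\Be^{s-\beta}_{q,w}(\Rd;Y)=\Be^{s+\sigma-d/r}_{q,w}(\Rd;Y)$ with $\|\widetilde{T_{m}}\|\le\|(c_{k})_{k}\|_{\ell^{u}}=4^{d/r}\tau_{p,X}c_{q,Y}\|(2^{k\sigma}\gamma_{k})_{k}\|_{\ell^{u}}$, for all $s\in\R$ and all $v,w\in[1,\infty]$ with $\tfrac1w\le\tfrac1u+\tfrac1v$; this is the claimed bound. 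For the endpoint $q=\infty$ I would note that the proof of Proposition~\ref{prop:multipliers_on_Besov_spaces} applies verbatim, since only the conjugate exponent $q'$ (here $q'=1$) enters its estimates.

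Accordingly, I do not expect any genuine obstacle: the substantive content is already packaged in Propositions~\ref{compact Fourier support multipliers} and~\ref{prop:multipliers_on_Besov_spaces}, and what remains is bookkeeping. The only points requiring care are identifying the cube $[-2^{k+1},2^{k+1}]^{d}$ containing $I_{k}$ (the source both of the universal factor $4^{d/r}$ and of the exponent shift $d/r$), matching the power $2^{kd/r}$ coming from $(b-a)^{d/r}$ against the assumed decay $2^{k\sigma}\gamma_{k}$ to read off $\beta=\tfrac dr-\sigma$ and hence the target smoothness index $s-\beta=s+\sigma-d/r$, and the routine check of moderate growth. In short, the "hard part" is purely organisational: keeping constants, indices, and the exponent conditions on $u,v,w$ consistent.
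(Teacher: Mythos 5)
Your proof is correct and follows essentially the same route as the paper: embed $I_k$ in the cube $[-2^{k+1},2^{k+1}]^d$ of side length $4\cdot 2^k$, apply Proposition~\ref{compact Fourier support multipliers} to $\ind_{I_k}m$ to get \eqref{general estimate on intervals} with $\beta=\tfrac dr-\sigma$ and $c_k=4^{d/r}\tau_{p,X}c_{q,Y}2^{k\sigma}\gamma(\{m(\xi)\mid\xi\in I_k\})$, then invoke Proposition~\ref{prop:multipliers_on_Besov_spaces}. You are a shade more careful than the paper in spelling out the moderate-growth check and in noting that Proposition~\ref{prop:multipliers_on_Besov_spaces} (stated there only for $q<\infty$) extends verbatim to $q=\infty$ since only $q'$ appears in its proof, which is needed here because cotype $q=\infty$ is allowed.
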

In the case of scalar-valued multipliers the $\gamma$-bound of course reduces to a uniform bound.
For the uniqueness of the extensions we refer to Remark \ref{rem:unique}.
\begin{proof}
First note that $m$ is of moderate growth at infinity, so $T_{m}:\Sw(\Rd;X)\to \Sw'(\Rd;Y)$ is well-defined. By Proposition \ref{compact Fourier support multipliers} applied to $\ind_{I_{k}}m$,
\begin{align*}
\|T_{m}(f)\|_{\Ellq(\Rd;Y)}\leq \tau_{p,X}c_{q,Y}(2\cdot 2^{k+1})^{d/r}\gamma(\{m(\xi)\mid \xi\in I_{k}\})\|f\|_{\Ellp(\Rd;X)}
\end{align*}
for all $k\in\N_{0}$ and all $f\in\Sw_{I_{k}}(\Rd;X)$. Letting $\beta:=\tfrac{d}{r}-\sigma$ and, for $k\in\N_{0}$, $c_{k}:=\tau_{p,X}c_{q,Y}4^{d/r}2^{k\sigma}\gamma(\{m(\xi)\mid \xi\in I_{k}\})$, the proof is concluded by appealing to Proposition \ref{prop:multipliers_on_Besov_spaces}.
\end{proof}

\begin{remark}
It also follows from \cite{Rozendaal15} that the smoothness parameter $\tfrac{d}{r}$ in Theorem \ref{main result Besov multipliers type} is sharp, since the results in \cite{Rozendaal15} are derived from Theorem \ref{main result Besov multipliers type} and are sharp with respect to this parameter (see \cite[Remark 6.5]{Rozendaal15}).
\end{remark}

In the same manner we derive the following result from Proposition \ref{prop:multipliers_on_homogeneous_Besov_spaces}.

\begin{theorem}\label{main result homogeneous Besov multipliers type}
Let $X$ be a Banach space with type $p\in[1,2]$ and $Y$ a Banach space with cotype $q\in[2,\infty]$, and let $r\in[1,\infty]$ be such that $\tfrac{1}{r}=\tfrac{1}{p}-\tfrac{1}{q}$. Let $m:\Rd\to\La(X,Y)$ be an $X$-strongly measurable map such that $\left(2^{k\sigma}\gamma(\{m(\xi)\mid \xi\in J_{k}\})\right)_{k\in\Z}\!\in\ell^{u}(\Z)$ for some $\sigma\in\R$ and $u\in[1,\infty]$. Then \eqref{definition homogeneous extension} defines an extension $\overline{T_{m}}\in \La(\dot{\Be}^{s}_{p,v}(\Rd;X),\dot{\Be}^{s+\sigma-d/r}_{q,w}(\Rd;Y))$ of $\dot{T}_{m}$ such that
\begin{align*}
\|\overline{T_{m}}\|_{\La(\dot{\Be}^{s}_{p,v}(\Rd;X),\dot{\Be}^{s+\sigma-d/r}_{q,w}(\Rd;Y))}\!\leq 4^{d/r} \tau_{p,X}c_{q,Y}\norm{\Big(2^{k\sigma}\gamma(\{m(\xi)\mid \xi\in J_{k}\})\Big)_{k}}_{\ell^{u}(\Z)}
\end{align*}
for all $s\in\R$ and all $v,w\in[1,\infty]$ with $\frac{1}{w}\leq \frac{1}{u}+\frac{1}{v}$.
\end{theorem}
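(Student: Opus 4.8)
The plan is to follow the proof of Theorem~\ref{main result Besov multipliers type} line by line, replacing each inhomogeneous ingredient by its homogeneous counterpart: the annuli $I_{k}$ indexed by $\N_{0}$ become the dyadic annuli $J_{k}$ indexed by $\Z$, and Proposition~\ref{prop:multipliers_on_Besov_spaces} is replaced by Proposition~\ref{prop:multipliers_on_homogeneous_Besov_spaces}. The only step that is genuinely new compared with the inhomogeneous case is checking that the restriction of $m$ to $\Rd\setminus\{0\}$ is of moderate growth at zero and infinity, which is needed both so that $\dot{T}_{m}:\dot{\Sw}(\Rd;X)\to\Sw'(\Rd;Y)$ is well defined and so that Proposition~\ref{prop:multipliers_on_homogeneous_Besov_spaces} applies. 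For this I would use that $\ell^{u}(\Z)\subseteq\ell^{\infty}(\Z)$ and that every $\gamma$-bounded collection is uniformly bounded by its $\gamma$-bound, so that $\|m(\xi)\|_{\La(X,Y)}\leq M2^{-k\sigma}$ for some $M\geq 0$, all $k\in\Z$ and all $\xi\in J_{k}$. Choosing $\alpha\in(0,\infty)$ large enough, the function $\xi\mapsto\abs{\xi}^{\alpha}(1+\abs{\xi})^{-2\alpha}\|m(\xi)\|_{\La(X,Y)}$ is then bounded on each $J_{k}$ by a constant $C_{k}$ with $\sum_{k\in\Z}C_{k}\abs{J_{k}}<\infty$, hence dominated a.e.\ by the $\Ell^{1}(\Rd)$-function $\sum_{k\in\Z}C_{k}\ind_{J_{k}}$.

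Next I would fix $k\in\Z$ and apply Proposition~\ref{compact Fourier support multipliers} to $\ind_{J_{k}}m$, using that $J_{k}$ is contained in the cube $[-2^{k+1},2^{k+1}]^{d}$ of side length $2^{k+2}$ and that $\{(\ind_{J_{k}}m)(\xi)\mid\xi\in[-2^{k+1},2^{k+1}]^{d}\}=\{m(\xi)\mid\xi\in J_{k}\}\cup\{0\}$ has $\gamma$-bound equal to $\gamma(\{m(\xi)\mid\xi\in J_{k}\})$. Since $\dot{T}_{m}(f)=\F^{-1}((\ind_{J_{k}}m)\widehat{f}\,)$ for $f\in\Sw_{J_{k}}(\Rd;X)$, this shows that $m$ is an $(\Ell^{p}_{J_{k}}(\Rd;X),\Ell^{q}_{J_{k}}(\Rd;Y))$-Fourier multiplier and that
\begin{align*}
\|\dot{T}_{m}(f)\|_{\Ellq(\Rd;Y)}\leq\tau_{p,X}c_{q,Y}\,4^{d/r}\,2^{kd/r}\,\gamma(\{m(\xi)\mid\xi\in J_{k}\})\,\|f\|_{\Ellp(\Rd;X)}
\end{align*}
for all $f\in\Sw_{J_{k}}(\Rd;X)$, where $4^{d/r}2^{kd/r}=(2^{k+2})^{d/r}$.

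Finally I would set $\beta:=\tfrac{d}{r}-\sigma$ and $c_{k}:=\tau_{p,X}c_{q,Y}4^{d/r}2^{k\sigma}\gamma(\{m(\xi)\mid\xi\in J_{k}\})$ for $k\in\Z$. Because $\beta+\sigma=\tfrac{d}{r}$, the displayed estimate becomes precisely \eqref{general homogeneous estimate on intervals}, and by hypothesis $(c_{k})_{k\in\Z}\in\ell^{u}(\Z)$ with $\|(c_{k})_{k}\|_{\ell^{u}(\Z)}=\tau_{p,X}c_{q,Y}4^{d/r}\norm{\big(2^{k\sigma}\gamma(\{m(\xi)\mid\xi\in J_{k}\})\big)_{k}}_{\ell^{u}(\Z)}$. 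Proposition~\ref{prop:multipliers_on_homogeneous_Besov_spaces} then directly gives that $\overline{T_{m}}$ from \eqref{definition homogeneous extension} extends $\dot{T}_{m}$ to a bounded operator from $\dot{\Be}^{s}_{p,v}(\Rd;X)$ into $\dot{\Be}^{s-\beta}_{q,w}(\Rd;Y)$ of norm at most $\|(c_{k})_{k}\|_{\ell^{u}(\Z)}$ whenever $\tfrac{1}{w}\leq\tfrac{1}{u}+\tfrac{1}{v}$, and since $s-\beta=s+\sigma-\tfrac{d}{r}$ this is exactly the asserted estimate. I expect the only real obstacle to be the first step — extracting the moderate growth condition at zero and infinity from the $\ell^{u}(\Z)$-summability hypothesis — together with the careful bookkeeping of the dyadic factors $2^{kd/r}$ so that the smoothness shift matches $s+\sigma-\tfrac{d}{r}$; everything else is a direct appeal to Propositions~\ref{compact Fourier support multipliers} and \ref{prop:multipliers_on_homogeneous_Besov_spaces}, exactly as in the inhomogeneous case.
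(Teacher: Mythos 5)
Your proposal is correct and follows exactly the route the paper has in mind: apply Proposition~\ref{compact Fourier support multipliers} to $\ind_{J_k}m$ to obtain the dyadic estimate \eqref{general homogeneous estimate on intervals} with $\beta=\tfrac{d}{r}-\sigma$ and $c_k=\tau_{p,X}c_{q,Y}4^{d/r}2^{k\sigma}\gamma(\{m(\xi)\mid\xi\in J_k\})$, and then invoke Proposition~\ref{prop:multipliers_on_homogeneous_Besov_spaces}. The paper only says ``in the same manner'' and omits the details; your verification of moderate growth at zero and infinity from the $\ell^u(\Z)$-hypothesis, and the bookkeeping of the cube side length $2^{k+2}$ giving the $4^{d/r}2^{kd/r}$ factor, fill that gap correctly.
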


As a consequence we can derive an $(\Ell^p,\Ell^q)$-multiplier result.
\begin{theorem}\label{thm:Lp-Lq multipliers type}
Let $X$ be a Banach space with type $p\in[1,2]$ and $Y$ a Banach space with cotype $q\in[2,\infty]$, and let $r\in[1,\infty]$ be such that $\tfrac{1}{r}=\tfrac{1}{p}-\tfrac{1}{q}$. Let $m:\Rd\setminus\{0\}\to\La(X,Y)$ be an $X$-strongly measurable map such that
\begin{equation}\label{eq:condmultRbdd}
\left(2^{kd/r}\gamma(\{m(\xi)\mid \xi\in J_{k}\})\right)_{k\in\Z}\!\in\ell^{1}(\Z).
\end{equation}
Then $T_{m}$ extends uniquely to a bounded map $\widetilde{T_{m}}\in\La(\Ellp(\Rd;X),\Ellq(\Rd;Y))$ with
\begin{align*}
\|\widetilde{T_{m}}\|_{\La(\Ellp(\Rd;X),\Ellq(\Rd;Y))}\leq C 4^{d/r}\tau_{p,X}c_{q,Y}\|(2^{kd/r}\gamma(\{m(\xi)\!\mid\!\xi\in J_{k}\}))_{k}\|_{\ell^{1}(\Z)},
\end{align*}
where $C\geq0$ is a constant which depends only on $p$ and $d$.
\end{theorem}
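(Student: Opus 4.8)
The plan is to deduce Theorem~\ref{thm:Lp-Lq multipliers type} from the homogeneous Besov-space result by sandwiching the multiplier operator between two elementary embeddings. Apply Theorem~\ref{main result homogeneous Besov multipliers type} with $\sigma=d/r$ and $u=1$, so that \eqref{eq:condmultRbdd} is exactly its hypothesis, and combine the conclusion with the continuous inclusions $\Ellp(\Rd;X)\hookrightarrow\Br^{0}_{p,\infty}(\Rd;X)$ and $\Br^{0}_{q,1}(\Rd;Y)\hookrightarrow\Ellq(\Rd;Y)$. First I would note that \eqref{eq:condmultRbdd} forces $2^{kd/r}\gamma(\{m(\xi)\mid\xi\in J_{k}\})\to0$ as $k\to\pm\infty$, and since $\gamma(\{m(\xi)\mid\xi\in J_{k}\})\geq\sup_{\xi\in J_{k}}\norm{m(\xi)}_{\La(X,Y)}$ the symbol $m$ is bounded on $\Rd\setminus\{0\}$; setting $m(0):=0$ it is of moderate growth at infinity, so $T_{m}f=\F^{-1}(m\widehat{f}\,)$ is a well-defined map $\Sw(\Rd;X)\to\Sw'(\Rd;Y)$, and it is this operator that is to be extended, $\Sw(\Rd;X)$ being dense in $\Ellp(\Rd;X)$ since $p<\infty$.

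The two embeddings are routine. Young's inequality gives $\norm{\psi_{k}\ast f}_{\Ellp(\Rd;X)}\leq\norm{\psi_{k}}_{\Ell^{1}(\Rd)}\norm{f}_{\Ellp(\Rd;X)}$, and since each $\psi_{k}$ is a dilate of $\psi_{0}$ with $\norm{\psi_{k}}_{\Ell^{1}(\Rd)}=\norm{\psi_{0}}_{\Ell^{1}(\Rd)}$, the first inclusion holds with norm $C:=\norm{\psi_{0}}_{\Ell^{1}(\Rd)}$, depending only on $d$ and the fixed function $\psi$; the second inclusion holds with norm $\leq1$ by the reconstruction formula and the triangle inequality, $\norm{\sum_{k}\psi_{k}\ast g}_{\Ellq}\leq\sum_{k}\norm{\psi_{k}\ast g}_{\Ellq}=\norm{g}_{\Br^{0}_{q,1}}$. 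Now Theorem~\ref{main result homogeneous Besov multipliers type} with $\sigma=d/r$, $u=1$, $s=0$, $v=\infty$ and $w=1$ --- admissible since $\tfrac1w\leq\tfrac1u+\tfrac1v$ reads $1\leq1$ --- yields a bounded operator $\overline{T_{m}}\colon\Br^{0}_{p,\infty}(\Rd;X)\to\Br^{0}_{q,1}(\Rd;Y)$ whose norm is at most $4^{d/r}\tau_{p,X}c_{q,Y}\norm{(2^{kd/r}\gamma(\{m(\xi)\mid\xi\in J_{k}\}))_{k}}_{\ell^{1}(\Z)}$, and composing the three arrows gives the asserted estimate up to the constant $C$. (Equivalently, one may skip Theorem~\ref{main result homogeneous Besov multipliers type} and argue directly from Proposition~\ref{compact Fourier support multipliers}: decompose $f=\sum_{k}\psi_{k}\ast f$, apply Proposition~\ref{compact Fourier support multipliers} to $\ind_{J_{k}}m$ on the cube $[-2^{k+1},2^{k+1}]^{d}\supseteq J_{k}$, and sum the resulting $\ell^{1}$-series.)

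The step that needs care is identifying $\overline{T_{m}}$ with the extension of $T_{m}$. For $f\in\Sw(\Rd;X)$ one has $f\in\Br^{0}_{p,\infty}(\Rd;X)$ and each $\psi_{k}\ast f\in\Sw_{J_{k}}(\Rd;X)$, so that $\dot{T}_{m}^{(k)}(\psi_{k}\ast f)=\F^{-1}(m\widehat{\psi_{k}}\widehat{f}\,)$; using that $m$ is bounded and $\sum_{k\in\Z}\widehat{\psi_{k}}=1$ a.e., dominated convergence shows $\overline{T_{m}}f=\sum_{k}\dot{T}_{m}^{(k)}(\psi_{k}\ast f)=\F^{-1}(m\widehat{f}\,)=T_{m}f$ in $\Sw'(\Rd;Y)$, and one checks this common distribution is the $\Ellq$-representative of $\overline{T_{m}}f\in\Br^{0}_{q,1}(\Rd;Y)$. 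Hence $\norm{T_{m}f}_{\Ellq(\Rd;Y)}\leq C\,4^{d/r}\tau_{p,X}c_{q,Y}\norm{(2^{kd/r}\gamma(\{m(\xi)\mid\xi\in J_{k}\}))_{k}}_{\ell^{1}(\Z)}\norm{f}_{\Ellp(\Rd;X)}$ for all $f\in\Sw(\Rd;X)$, and by density $T_{m}$ extends uniquely to $\widetilde{T_{m}}\in\La(\Ellp(\Rd;X),\Ellq(\Rd;Y))$. Beyond this identification there is no real obstacle; what is essential is that the indices leave no slack --- the $\ell^{1}$-hypothesis pins $u=1$, after which $w=1$ (needed for $\Br^{0}_{q,1}\hookrightarrow\Ellq$) is only just admissible and $v=\infty$ (needed for $\Ellp\hookrightarrow\Br^{0}_{p,\infty}$) is maximal --- and that the homogeneous, not inhomogeneous, framework is indispensable, since the low-frequency annuli $J_{k}$ with $k\to-\infty$ must be allowed to contribute.
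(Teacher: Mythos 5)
Your proof follows essentially the same route as the paper's: sandwich the homogeneous Besov result (Theorem~\ref{main result homogeneous Besov multipliers type}, with $u=w=1$, $v=\infty$, $s=0$, $\sigma=d/r$) between the two embeddings $\Ellp(\Rd;X)\hookrightarrow\dot{\Be}^{0}_{p,\infty}(\Rd;X)$ and $\dot{\Be}^{0}_{q,1}(\Rd;Y)\hookrightarrow\Ellq(\Rd;Y)$, and conclude by density. The paper works with $\dot{\Sw}(\Rd;X)$ rather than $\Sw(\Rd;X)$ as the dense subspace, which lets it quote the fact that $\overline{T_m}$ extends $\dot{T}_m$ directly, but your more detailed identification of $\overline{T_m}f$ with $T_m f$ on Schwartz functions is a valid alternative.

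One small slip in your justification: the hypothesis \eqref{eq:condmultRbdd} does \emph{not} imply that $m$ is bounded on $\Rd\setminus\{0\}$ when $r<\infty$. It gives $2^{kd/r}\gamma(\{m(\xi)\mid\xi\in J_k\})\to 0$ as $k\to\pm\infty$, but for $k\to-\infty$ the weight $2^{kd/r}\to 0$, so the $\gamma$-bounds, and hence $\sup_{\xi\in J_k}\|m(\xi)\|$, may blow up as $k\to-\infty$ (roughly like $|\xi|^{-d/r}$ near the origin). What your argument actually needs is that $m\widehat{f}\in\Ellone$ for $f\in\Sw(\Rd;X)$, and this does hold: since $2^{kd}\leq 2^{kd/r}$ for $k\leq 0$ one has
$\int_{|\xi|\leq 2}\|m(\xi)\|\,d\xi\lesssim\sum_{k\leq0}2^{kd}\gamma(\{m(\xi)\mid\xi\in J_k\})\leq\sum_{k\leq0}2^{kd/r}\gamma(\{m(\xi)\mid\xi\in J_k\})<\infty$,
so $m$ is locally integrable near $0$ (and decays at infinity). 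With this replacement both the well-definedness of $T_m$ on $\Sw(\Rd;X)$ and the dominated-convergence step go through, since the dominating function is $\|m\widehat{f}\|\in\Ellone$, not $\|m\|_\infty|\widehat{f}|$.
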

In \cite[Theorem 1.1]{Rozendaal-Veraar16Fourier} we derived a similar result, with \eqref{eq:condmultRbdd} replaced by the assumption that $\{|\xi|^{d/r} m(\xi)\mid \xi\in \R^d\setminus\{0\}\}$ is $\gamma$-bounded. However, there we use that $X$ has type $p_{0}>p$ and $Y$ cotype $q_{0}<q$. Note that this is not needed in Theorem \ref{thm:Lp-Lq multipliers type}, at the cost of a more restrictive $\gamma$-boundedness condition.

\begin{proof}
First note that $T_{m}:\dot{\Sw}(\Rd;X)\to\Sw'(\Rd;Y)$ is well-defined since $m$ is of moderate growth at infinity, where we use that $\left(2^{kd/r}\gamma(\{m(\xi)\mid \xi\in J_{k}\})\right)_{k\in\Z}\!\in\ell^{1}(\Z)$. Moreover, since $\dot{\Sw}(\Rd;X)\subseteq\Ellp(\Rd;X)$ is dense, it suffices to show that
\begin{align*}
\|T_{m}(f)\|_{L^q(\R^d;Y)}\leq C4^{d/r}\tau_{p,X}c_{q,Y}\!\|(2^{kd/r}\gamma(\{m(\xi)\mid \xi\in J_{k}\}))_{k}\|_{\ell^{1}(\Z)}\|f\|_{\Ellp(\Rd;X)}
\end{align*}
for all $f\in\dot{\Sw}(\Rd;X)$. To this end, note that it straightforward to show that the contractive inclusion $\dot{\Be}^{0}_{q,1}(\Rd;Y)\hookrightarrow \Ellq(\Rd;Y)$ and the continuous inclusion $\Ellp(\Rd;X)\hookrightarrow \dot{\Be}^{0}_{p,\infty}(\Rd;X)$ hold. Using these inclusions, the required estimate follows from Theorem \ref{main result homogeneous Besov multipliers type} with $u = w=1$ and $v = \infty$.
\end{proof}

Theorem \ref{thm:Lp-Lq multipliers type} can be improved for UMD spaces. For details on UMD spaces we refer to \cite{Bu3, RF86} and to the recent monograph \cite{HNVW1}.
\begin{remark}\label{better with UMD}
Suppose, in addition to the assumptions of Theorem \ref{thm:Lp-Lq multipliers type}, that $X$ is a UMD space with cotype $q_{0}\in[2,\infty)$ and $Y$ is a UMD space with type $p_0\in(1,2]$. Then the homogeneous version of \cite[Proposition 3.1]{Veraar13} (proved in the same way as in the inhomogeneous case) yields the continuous embeddings $\Ellp(\Rd;X)\hookrightarrow \dot{\Be}^{0}_{p,q_{0}}(\Rd;X)$ and $\dot{\Be}^{0}_{q,p_0}(\Rd;X)\hookrightarrow \Ell^{q}(\Rd;X)$. Following the proof of Theorem \ref{thm:Lp-Lq multipliers type} it then suffices to assume that for $\frac{1}{r_0} = \frac{1}{p_0} - \frac{1}{q_0}$, one has
\[\left(2^{kd/r}\gamma(\{m(\xi)\mid \xi\in I_{k}\})\right)_{k\in\Z}\!\in\ell^{r_0}.\]
\end{remark}

\begin{remark}\label{rem: weighted gamma-boundedness}
It is straightforward to check that the condition on $m$ in Theorem \ref{thm:Lp-Lq multipliers type} holds if $\{\abs{\xi}^{\sigma}m(\xi)\mid \abs{\xi}\geq 1\}\cup \{\abs{\xi}^{\mu}m(\xi)\mid \abs{\xi}\leq 1\}\subseteq\La(X,Y)$ is $\gamma$-bounded for some
$\sigma,\mu\in\R$ with $\sigma>\tfrac{d}{r}>\mu$. The exponent $\tfrac{d}{r}$ cannot be improved, as can be seen from the scalar case and the Hardy--Littlewood--Sobolev inequality (see \cite[Theorem 6.1.3]{Grafakos09}).

As a consequence, by applying the same method as in \cite[Lemma 3.26 and Proposition 3.27]{Rozendaal-Veraar16Fourier}, one sees that in certain cases the type $p$ of $X$ and cotype $q$ of $Y$ are necessary for Theorem \ref{thm:Lp-Lq multipliers type} and hence for Theorem \ref{main result homogeneous Besov multipliers type}.
\end{remark}

\section{Extrapolation}\label{extrapolation}

In this section we give a proof of \cite[Theorem 4.1]{Rozendaal-Veraar16Fourier} and use it to extrapolate our Fourier multiplier results on Besov spaces to different integrability exponents.  In order to prove the extrapolation result we extend several results of H\"ormander in \cite{Hormander60} to the vector-valued setting and $p\leq q$.

For $a,p,q\in [1, \infty]$ with $a\neq \infty$ consider the following identity:
\begin{align}\label{eq:admissble}
\frac{1}{p} - \frac{1}{q} = 1-\frac{1}{a}.
\end{align}

\subsection{Kernels and extrapolation}\label{subsec:ExtraKernel}

Throughout this section we fix $d\in\N$ and Banach spaces $X$ and $Y$. Consider the following variant of H\"ormander's condition which we formulate in the strong operator topology:
\begin{enumerate}
\item[(H)$_a$]
Let $K:\Rd\setminus\{0\}\to \La(X,Y)$ be such that for all $x\in X$, $t\mapsto K(t) x$  is locally integrable on $\Rd\setminus\{0\}$. Suppose there exists a constant $C_{H,a}\geq 0$ such that
\begin{align*}
\Big(\int_{|s|\geq 2|t|} \|K(s-t)x - K(s)x\|^{a} \, ds\Big)^{\frac1a}\leq C_{H,a} \|x\|\quad (x\in X, t\in \R^d\setminus\{0\}).
\end{align*}
We denote the infimum over all {\em H\"ormander constants} $C_{H,a}$ by $C_{H,a}(K)$.
\end{enumerate}

\begin{remark}
In particular, the condition (H)$_a$ holds with constant $C_{H,a}>0$ if $K$ is $X$-strongly measurable and
\begin{align}\label{eq:Hormanderuniform}
\Big(\int_{|s|\geq 2|t|} \|K(s-t) - K(s)\|_{\La(X,Y)}^{a} \, ds\Big)^{\frac1a}\leq C_{H,a}\quad(t\in \R^d\setminus \{0\}),
\end{align}
where we assume that the integrand is measurable (or at least $\one_{|s|\geq 2|t|} \|K(s-t) - K(s)\|\leq f_t(s)$, where $f$ is measurable and satisfies $\|f_t\|_{\Ell^a(\R^d)}\leq C_{H,a}$ for all $t\neq 0$).
Under the appropriate measurability conditions on $K^*$, \eqref{eq:Hormanderuniform} implies (H)$_a$ for $K^*$ as well.

The advantage of (H)$_a$ over \eqref{eq:Hormanderuniform} will become clear in the proof of Theorem \ref{thm:extrapolmult}.
\end{remark}

Let $\Ellinftyc(\Rd)\subseteq \Ellinfty(\Rd)$ denote the subset of functions which have compact support. Let $\Ellinftyc(\Rd)\otimes X$ be the linear span of the functions $t\mapsto (f\otimes x)(t) := f(t) x$ where $f\in \Ellinftyc(\Rd)$ and $x\in X$.

Let $K:\Rd\setminus\{0\}\to \La(X,Y)$ be such that for all $x\in X$, $t\mapsto K(t) x$  is locally integrable on $\Rd\setminus\{0\}$. For a bounded linear operator $T:\Ellp(\Rd;X)\to \Ellq(\Rd;Y)$ and $p,q\in [1, \infty]$, consider the following condition: for all $f\in \Ellinftyc(\Rd)\otimes X$
\begin{equation}\label{eq:TreprK}
T f(s) = \int_{\R^d} K(s-t) f(t)\,  \ud t\quad \text{for almost all }s\in (\supp(f))^c.
\end{equation}

The following result is a vector-valued extension of \cite[Theorem 2.2]{Hormander60}. Recall that the norm of the space $\Ell^{p,\infty}(\Rd;X)$ is given by
\begin{align}\label{eq:weakLpnotation}
\|f\|_{\Ell^{p,\infty}(\R^d;X)} := \sup_{\alpha>0} \alpha \lambda_f(\alpha)^{\frac1p}<\infty,
\end{align}
where $\lambda_f(\alpha) := \mu(\{s\in \R^d\mid\|f(s)\|_{X}>\alpha\})$ for $\alpha>0$ and $\mu$ is the Lebesgue measure.

\begin{proposition}[Extrapolation to $\Ell^1\to \Ell^{a,\infty}$]\label{prop:weakLa}
Let $p,q\in(1,\infty]$ and $a\in[1,\infty)$ satisfy \eqref{eq:admissble}. Let $K:\Rd\setminus\{0\}\to\La(X,Y)$ satisfy condition (H)$_a$. Let $T:\Ellp(\Rd;X)\to \Ellq(\Rd;Y)$ be a bounded linear operator of norm $B$ satisfying \eqref{eq:TreprK}. Then $T:\Ell^1(\R^d;X)\to \Ell^{a,\infty}(\R^d;Y)$ is bounded and
\begin{align*}
\|T\|_{\La(\Ellone(\Rd;X),\Ell^{a,\infty}(\Rd;Y))} \leq C_{d,a} B + 4 C_{H,a}(K),
\end{align*}
where $C_{d,a}:=2+2 d^{\frac{d}{2a}} 4^{\frac{d}{a}}$.
\end{proposition}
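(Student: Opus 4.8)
The strategy is the classical Calderón–Zygmund argument, adapted to the vector-valued setting and to the exponent pair $p \leq q$. Fix $f \in \Ellone(\Rd;X) \cap \Ellp(\Rd;X)$ (such $f$ are dense in $\Ellone$, or one may work directly with $f \in \Ellinftyc(\Rd)\otimes X$ which are dense in $\Ellone(\Rd;X)$) and a level $\alpha > 0$. The first step is to perform a Calderón–Zygmund decomposition of $f$ at height $\alpha$: apply the dyadic maximal function stopping-time construction to the scalar function $t \mapsto \|f(t)\|_X$ to obtain a countable family of disjoint dyadic cubes $(Q_j)_j$ with $\sum_j |Q_j| \leq \alpha^{-1} \|f\|_{\Ellone(\Rd;X)}$ and $\alpha \leq |Q_j|^{-1}\int_{Q_j} \|f\|_X \leq 2^d \alpha$. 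Write $f = g + b$ with the good part $g := f \mathbf{1}_{(\cup_j Q_j)^c} + \sum_j \big(|Q_j|^{-1}\int_{Q_j} f\big)\mathbf{1}_{Q_j}$ and the bad part $b := \sum_j b_j$, where $b_j := \big(f - |Q_j|^{-1}\int_{Q_j} f\big)\mathbf{1}_{Q_j}$ has mean zero and is supported in $Q_j$. The constant $d^{d/(2a)}4^{d/a}$ appearing in $C_{d,a}$ is exactly the geometric factor coming from the ratio of the diameter of a cube to that of a slightly enlarged ball, so the relevant enlargement should be $Q_j^* := $ the ball concentric with $Q_j$ of radius (say) $2\sqrt d$ times the sidelength, or equivalently $2Q_j$ dilated appropriately; I would pin the precise dilation down so the bookkeeping matches $C_{d,a}$.

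The second step handles the good part using only the $\Ellp \to \Ellq$ boundedness. One has $\|g\|_{\Ellinfty(\Rd;X)} \lesssim_d \alpha$ and $\|g\|_{\Ellone(\Rd;X)} \leq 2\|f\|_{\Ellone(\Rd;X)}$, hence by interpolation $\|g\|_{\Ellp(\Rd;X)}^p \lesssim_d \alpha^{p-1}\|f\|_{\Ellone(\Rd;X)}$. Then $\|Tg\|_{\Ellq(\Rd;Y)} \leq B\|g\|_{\Ellp(\Rd;X)}$, and the weak-type bound is extracted via Chebyshev: using \eqref{eq:admissble} in the form $\frac1a = 1 - \frac1p + \frac1q$, one checks that
\begin{align*}
\mu\big(\{\|Tg\|_Y > \alpha/2\}\big) \leq (2/\alpha)^q \|Tg\|_{\Ellq(\Rd;Y)}^q \lesssim_d \alpha^{-q} B^q \alpha^{(p-1)q/p}\|f\|_{\Ellone}^{q/p},
\end{align*}
and the exponent of $\alpha$ on the right is $-a$ after a short computation, with the $\|f\|_{\Ellone}$ exponent equal to $1$; this is where $p \leq q$ (rather than $p = q$) is accommodated, and it produces the term $C_{d,a}B$.

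The third step — and the main obstacle — is the bad part, where condition (H)$_a$ enters. Let $\Omega^* := \bigcup_j Q_j^*$; then $\mu(\Omega^*) \lesssim_d \alpha^{-1}\|f\|_{\Ellone}$, so it suffices to estimate $Tb$ on $(\Omega^*)^c$. On that set we use the kernel representation \eqref{eq:TreprK}: for $s \notin Q_j^* \supseteq \supp(b_j)$, and using $\int b_j = 0$,
\begin{align*}
Tb_j(s) = \int_{Q_j} \big(K(s-t) - K(s-c_j)\big) b_j(t)\, \ud t,
\end{align*}
where $c_j$ is the center of $Q_j$. Taking $\Ell^a$-norms in $s$ over $\{|s - c_j| \geq 2\,\mathrm{diam}(Q_j)\}\supseteq (Q_j^*)^c$, Minkowski's integral inequality together with (H)$_a$ (applied with $x = b_j(t)$, $t$ replaced by $t - c_j$, after translating) gives $\|Tb_j\|_{\Ell^a((\Omega^*)^c;Y)} \leq C_{H,a}(K)\|b_j\|_{\Ellone(\Rd;X)}$. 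Here it is crucial that (H)$_a$ is stated in the strong-operator form — one cannot pull $\|b_j(t)\|_X$ out and is instead forced to keep the vector $b_j(t)$ inside the kernel difference, which is precisely the point of that formulation (cf. the remark preceding the proposition). Summing, $\sum_j \|b_j\|_{\Ellone} \leq 2\|f\|_{\Ellone}$, so by the triangle inequality in $\Ell^a$ — or rather, since $\Ell^a$ with $a \geq 1$ is a normed space and we only need an $\Ell^a$ (not $\Ell^{a,\infty}$) bound on this piece — $\|Tb\|_{\Ell^a((\Omega^*)^c;Y)} \leq 2 C_{H,a}(K)\|f\|_{\Ellone}$. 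Then Chebyshev on $(\Omega^*)^c$ converts this into $\mu(\{s \in (\Omega^*)^c : \|Tb(s)\|_Y > \alpha/2\}) \leq (2/\alpha)^a (2C_{H,a}(K))^a \|f\|_{\Ellone}^a$, and combining the bad set $\Omega^*$ with this gives the contribution $4C_{H,a}(K)$ after taking $a$-th roots and tracking the factor $2$ from the level split and the factor $2$ from $\sum\|b_j\| \leq 2\|f\|$. Finally, adding the good and bad estimates and optimizing the split of the level $\alpha$ (or simply using $\alpha/2$ for each) yields $\|Tf\|_{\Ell^{a,\infty}(\Rd;Y)} \leq C_{d,a}B + 4C_{H,a}(K)$; a density argument from $\Ellinftyc(\Rd)\otimes X$ to all of $\Ellone(\Rd;X)$ completes the proof. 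The subtle points to get right are the exact geometric constants (the enlargement radius and the resulting $d^{d/(2a)}4^{d/a}$), and ensuring the $\Ell^a$-in-$s$ integration in the bad-part estimate is taken over a region genuinely contained in $\{|s-t| \geq 2|t'|\}$ for the appropriate shifted variable so that (H)$_a$ applies verbatim.
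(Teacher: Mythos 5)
The overall structure you propose — Calder\'on--Zygmund decomposition, good part via Chebyshev and the $\Ell^p\to\Ell^q$ bound, bad part via the mean-zero property and (H)$_a$ on the complement of dilated cubes, Minkowski to pass the $\Ell^a$-norm inside the $t$-integral — is the same as the paper's, and your remarks about the role of the strong-operator form of (H)$_a$ and the density of $\Ellinftyc(\Rd)\otimes X$ are on the mark. However, there is a genuine gap at the very first step: you decompose $f$ at height $\alpha$, and this gives the wrong powers of $\alpha$ when $p<q$.

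Concretely, with your CZ height $\alpha$ (and normalizing $\|f\|_{\Ell^1}=1$), the good-part Chebyshev estimate gives an $\alpha$-exponent of $-q + \tfrac{(p-1)q}{p} = -\tfrac{q}{p}$, which equals $-a$ if and only if $p=q$; your claim that ``the exponent of $\alpha$ on the right is $-a$ after a short computation'' is not correct under \eqref{eq:admissble} when $p<q$ (indeed $a=\frac{pq}{pq-q+p}$ and $\tfrac{q}{p}=a$ forces $p=q$). Likewise, your bad-set measure estimate reads $\mu(\Omega^*)\lesssim_d \alpha^{-1}$, whereas the target bound requires $\lesssim\alpha^{-a}$. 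Both defects have the same source: for the $\Ell^1\to\Ell^{a,\infty}$ endpoint with $a>1$, the decomposition must be performed at the modified height $\gamma\alpha^{a}$, where $\gamma$ is chosen proportional to $B^{-a}$ (the paper takes $\gamma=B^{-a}2^{-(d+a)}$). With that height one gets $\|g\|_{\Ell^p}\leq 2^{d/p'}\gamma^{1/p'}\alpha^{a/p'}$, and then, using $\tfrac{1}{p'}=\tfrac{1}{a}-\tfrac{1}{q}$ from \eqref{eq:admissble}, the good-part Chebyshev exponent $-q+qa/p'$ simplifies to exactly $-a$; similarly $\sum_j|Q_j|\leq(\gamma\alpha^a)^{-1}$ produces the bad-set measure $\lesssim\alpha^{-a}B^a$. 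Once you replace ``height $\alpha$'' by ``height $\gamma\alpha^a$'' with this $\gamma$, the rest of your argument goes through and matches the paper's proof.
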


\begin{proof}
We adopt the presentation from \cite[Theorem 4.3.3]{Grafakos08} and show that it extends to the vector-valued setting with general $p\leq q$.

Assume $q<\infty$ (the case $q=\infty$ is left to the reader, see \cite[Exercise 4.3.7]{Grafakos08}).
In order to prove the result it suffices to show that, for each simple function $f\in\Ellinftyc(\Rd;X)$ of norm $\|f\|_{L^{1}(\Rd;X)}\leq 1$,
\begin{align}\label{eq:toshowTbddweak}
\|Tf\|_{a,\infty}\leq C_{d,a} B + 4 C_{H,a}(K).
\end{align}

We apply the Calder\'on-Zygmund decomposition of height $\gamma \alpha^a$ to write $f$ as the sum of a good and bad part $g+b$. Here $\alpha>0$ is fixed for the moment, and we set
\begin{equation}\label{eq:choicegamma}
\gamma := B^{-a}2^{-(d+a)}.
\end{equation}
To obtain this decomposition note that \cite[Theorem 4.3.1 and Remark 4.3.2]{Grafakos08} have a straightforward generalization to the vector-valued setting.
The decomposition given there yields $f = g+b$, where $b  = \sum_{j\in\N} b_j$ for simple $b_j\in\Ellinftyc(\Rd)\otimes X$,
and the existence of a sequence of disjoint cubes $(Q_j)_{j\in\N}$ in $\Rd$ such that the following properties are satisfied, for each $j\in\N$:
\begin{align}\label{eq:CZprop1}
\|g\|_{1}  \leq 1,\quad & \|g\|_{\infty}\leq 2^d \gamma \alpha^a,\quad\|g\|_{p}\leq 2^{\frac{d}{p'}} \gamma^{\frac{1}{p'}}\alpha^{\frac{a}{p'}}, \quad\|b\|_1\leq 2,
\\ \label{eq:CZprop2}
\supp(b_j) & \subseteq Q_j,\quad\int b_j(t) \,\ud t= 0,\quad\sum_{k\in\N} |Q_k| \leq \frac{1}{\gamma\alpha^a}.
\end{align}

By subadditivity we can write (see below \eqref{eq:weakLpnotation} for the definition of $\lambda_f$)
\begin{align*}
\lambda_{Tf}(\alpha) \leq \lambda_{Tg}(\alpha/2) + \lambda_{Tb}(\alpha/2).
\end{align*}
The good part $Tg$ can be estimated directly using the boundedness of $T$ and \eqref{eq:CZprop1}:
\begin{align}\label{eq:goodpart}
\lambda_{Tg}(\alpha/2) \leq \frac{2^q}{\alpha^q} \|Tg\|_q^q \leq \frac{2^q B^q}{\alpha^q} \|g\|_p^q
\leq  \frac{2^q B^q}{\alpha^q}  2^{\frac{q d}{p'}}  \alpha^{\frac{q a}{p'}} \gamma^{\frac{q}{p'}}= \frac{2^a B^a}{\alpha^a} ,
\end{align}
where we used \eqref{eq:admissble} and the choice of $\gamma$ given in \eqref{eq:choicegamma} in the last step.
The bad part we split into two parts again.
Let $Q_j^*$ be the unique cube with sides parallel to $Q_j$ and the same center as $Q_j$ but such that $\ell(Q_j^*) = 2\sqrt{d} \ell(Q_j)$, where $\ell(Q)$ denotes the side length of a cube $Q$. Setting $\Omega = \bigcup_{j\geq 1}Q_j^*$, we can write
\begin{align}\label{eq:badpartsplit}
\lambda_{Tb}(\alpha/2) & \leq |\Omega| + \big|\{s\in\Omega^c: \|Tb(s)\|_Y>\frac{\alpha}{2}\}\big|
 \leq  |\Omega| + \frac{2^a}{\alpha^a} \int_{\Omega^c} \|Tb(s)\|^a \,\ud s
\end{align}
By the choice of $Q_j^*$ and by \eqref{eq:CZprop2} and \eqref{eq:choicegamma},
\begin{align}\label{eq:badpart1}
|\Omega| \leq \sum_{j\geq 1} |Q_j^*| \leq \frac{(2\sqrt{d})^d }{\gamma\alpha^a} = \frac{4^d d^{\frac{d}{2}} 2^{a} B^a} {\alpha^a}.
\end{align}
For the second part of \eqref{eq:badpartsplit} note that by the triangle inequality
\begin{align}\label{eq:badpart2}
\|\one_{\Omega^c} Tb\|_{a}\leq \sum_{j\geq 1}\|\one_{\Omega^c} Tb_j\|_{a}\leq \sum_{j\geq 1}\|\one_{(Q_j^*)^c} Tb_j\|_{a}.
\end{align}
We first estimate each term of this series separately. Let $t_j$ denote the center of $Q_j$. By \eqref{eq:TreprK} and \eqref{eq:CZprop2} (twice),
\begin{align*}
\|\one_{(Q_j^*)^c} Tb_j\|_{a} &= \Big(\int_{\R^d\setminus Q_j^*} \Big\| \int_{Q_j} K(s-t)b_j(t) \,\ud t\Big\|^a \,\ud s\Big)^{\frac1a}
\\ & = \Big(\int_{(Q_j^*)^c} \Big\| \int_{Q_j} K(s-t)b_j(t) - K(s-t_j) b_j(t) \,\ud t\Big\|^a \,\ud s\Big)^{\frac1a}
\\ & \stackrel{(i)}{\leq} \int_{Q_j} \Big( \int_{(Q_j^*)^c} \|K(s-t)b_j(t) - K(s-t_j) b_j(t)\|^a \,\ud s\Big)^{\frac1a} \,\ud t
\\ & = \int_{Q_j} \Big( \int_{(Q_j^*)^c - t_j} \|K(s-(t-t_j))b_j(t) - K(s) b_j(t)\|^a \,\ud s\Big)^{\frac1a} \,\ud t
\\ & \stackrel{(ii)}{\leq} \int_{Q_j} \Big( \int_{|s|\geq 2|t-t_j|} \|K(s-(t-t_j))b_j(t) - K(s) b_j(t)\|^a \,\ud s\Big)^{\frac1a} \,\ud t
\\ & \stackrel{(iii)}{\leq} C_{H,a}(K) \int_{Q_j} \|b_j(t)\| \,\ud t = C_{H,a}(K) \|b_j\|_1.
\end{align*}
In (i) we applied Minkowski's inequality. The estimate (ii) follows from $|s|\geq \frac12 \ell(Q_j^*) = \sqrt{d} \ell(Q_j) \geq 2|t-t_j|$ for $s\in (Q_j^*)^{\mathrm{c}}-t_{j}$ and $t\in Q_j$. In (iii) we applied (H)$_a$. With \eqref{eq:badpart2}, \eqref{eq:CZprop1}  and \eqref{eq:CZprop2} we can conclude that
\begin{align}\label{eq:badpart2concl}
\|\one_{\Omega^c} Tb\|_{a}\leq C_{H,a}(K) \sum_{j\geq 1}\|b_j\|_1 = C_{H,a}(K)  \|b\|_{1} \leq 2 C_{H,a}(K).
\end{align}
Now \eqref{eq:badpartsplit}, \eqref{eq:badpart1} and \eqref{eq:badpart2concl} yield
\begin{align}\label{eq:badpartfinal}
\lambda_{Tb}(\alpha/2) \leq \frac{4^d d^{\frac{d}{2}} 2^{a} B^a}{\alpha^a} +  \frac{4^a C_{H,a}(K)^a}{\alpha^a}.
\end{align}

Finally, combining the good part \eqref{eq:goodpart} and the bad part \eqref{eq:badpartfinal}, we obtain
\begin{align*}
\alpha  \lambda_{Tf}(\alpha)^{\frac1a}  \leq  \Big(2^a B^a +  4^d d^{\frac{d}{2}} 2^{a} B^a +  4^a C_{H,a}(K)^a\Big)^{\frac1a}.
\end{align*}
Now \eqref{eq:toshowTbddweak} follows from the estimate $(x^{\alpha} + y^{\alpha})^{\frac1{\alpha}} \leq x+y$, for $x,y>0$, and by taking the supremum over all $\alpha>0$.
\end{proof}

\begin{corollary}[Extrapolation I, kernel condition]\label{cor:Hormander}
Let $p_0,q_0\in (1, \infty]$ and $a\in[1,\infty)$ be such that $\frac{1}{p_0} - \frac{1}{q_0} = 1-\frac{1}{a}$.
Let $K:\Rd\setminus\{0\}\to\La(X,Y)$ satisfy (H)$_a$. Let $T:\Ell^{p_0}(\Rd;X)\to \Ell^{q_0}(\Rd;Y)$ be a bounded linear operator of norm $B$ satisfying \eqref{eq:TreprK}.
Then, for all $(p,q)$ satisfying $p\in (1, p_0]$ and \eqref{eq:admissble}, $T:\Ellp(\Rd;X)\to \Ellq(\Rd;Y)$ is bounded and
\begin{align*}
\|T\|_{\La(\Ellp(\Rd;X),\Ellq(\Rd;Y))} \leq C_{p_0, q_0, p, d} (B + C_{H,a}(K)),
\end{align*}
where $C_{p_0, q_0, p, d}\sim (p-1)^{-1}$ as $p\downarrow 1$.
\end{corollary}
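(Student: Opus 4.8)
The plan is to deduce the corollary from Proposition \ref{prop:weakLa} by real interpolation. First I would apply Proposition \ref{prop:weakLa} to the pair $(p_0,q_0)$: since $p_0,q_0\in(1,\infty]$, $a\in[1,\infty)$ and $\frac{1}{p_0}-\frac{1}{q_0}=1-\frac1a$, it shows that $T$ extends to a bounded operator $T:\Ellone(\Rd;X)\to\Ell^{a,\infty}(\Rd;Y)$ of norm at most $C_{d,a}B+4C_{H,a}(K)$. Together with the hypothesis, $T$ then satisfies a weak-type estimate at the endpoint $(1,a)$ and a strong-type estimate at $(p_0,q_0)$; a routine density argument based on $\Ellinftyc(\Rd)\otimes X$ (on which $T$ is the operator given in the hypothesis, and which is dense in $\Ellone(\Rd;X)$ and in $\Ellp(\Rd;X)$) shows that all the operators in question are restrictions of a single linear map, so there is no ambiguity about what ``$T$'' means on $\Ellp(\Rd;X)$.

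Next I would interpolate. Fix $p\in(1,p_0)$, the case $p=p_0$ being the hypothesis itself, and let $q$ be determined by \eqref{eq:admissble}; choose $\theta\in(0,1)$ with $\frac1p=(1-\theta)\cdot 1+\frac{\theta}{p_0}$. Applying \eqref{eq:admissble} to the three pairs $(1,a)$, $(p_0,q_0)$ and $(p,q)$ one checks directly that $\frac1q=(1-\theta)\cdot\frac1a+\frac{\theta}{q_0}$, so $(\tfrac1p,\tfrac1q)$ is the $\theta$-intermediate point of the two endpoints. Moreover $1\le a$ and $p_0\le q_0$ (both equivalent to $1-\frac1a\ge 0$), and $a\ne q_0$ since $a=q_0$ would force $\frac1{p_0}=1$, contradicting $p_0>1$. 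Hence the off-diagonal Marcinkiewicz interpolation theorem applies to $T$ on $\Ellone(\Rd;X)+\Ell^{p_0}(\Rd;X)$: its proof — splitting $f=f\ind_{\{\|f\|>\lambda\}}+f\ind_{\{\|f\|\le\lambda\}}$, with the first summand in $\Ellone(\Rd;X)$ and the second in $\Ell^{p_0}(\Rd;X)$ (both memberships using $1<p<p_0$), and then estimating distribution functions and optimizing over $\lambda$ — uses only sublinearity of $T$ and the triangle inequality in $Y$, so it carries over verbatim to the Banach space valued setting; see \cite{Grafakos08} for the scalar argument. This yields that $T$ maps $\Ellp(\Rd;X)$ boundedly into $\Ellq(\Rd;Y)$.

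Finally I would track the constants. Running the Marcinkiewicz argument with the explicit endpoint norms above gives a bound $\|T\|_{\La(\Ellp(\Rd;X),\Ellq(\Rd;Y))}\le C_{p_0,q_0,p,d}\,(B+C_{H,a}(K))$ in which the only $p$-dependent factor is the Marcinkiewicz constant. As $p\downarrow 1$ one has $\theta\downarrow 0$, $q\downarrow a$, and $q-a$ is comparable to $p-1$ (with constant depending on $p_0,q_0$); the factor in the Marcinkiewicz constant coming from approaching the $\Ellone$-endpoint is of order $(p-1)^{-1}$, the remaining factors staying bounded in terms of $p_0,q_0,d$, which produces the asserted behaviour $C_{p_0,q_0,p,d}\sim(p-1)^{-1}$. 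The step I expect to require the most care is precisely this last one: to get the sharp $(p-1)^{-1}$ rate one cannot merely invoke the interpolation theorem as a black box but must carry the constants through the Calder\'{o}n--Zygmund/Marcinkiewicz splitting by hand, choosing the truncation level optimally and isolating the divergent dyadic sum as the endpoint is approached. By contrast, the passage to Banach space valued functions, the verification of the hypotheses of the Marcinkiewicz theorem, and the bookkeeping showing that the various extensions of $T$ are consistent are all routine.
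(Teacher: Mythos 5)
Your argument is correct and follows essentially the same route as the paper: apply Proposition \ref{prop:weakLa} to obtain the weak-type $(1,a)$ endpoint bound with explicit constant, then run Marcinkiewicz interpolation between $(1,a)$ and $(p_0,q_0)$, tracking the constant to see the $(p-1)^{-1}$ blow-up as $p\downarrow 1$. The paper simply cites a reference (Hunt--Weiss) for the constant-explicit, vector-valued Marcinkiewicz theorem rather than redoing the Calder\'on--Zygmund splitting by hand, but the underlying argument and the verification of the hypotheses ($1\le a$, $p_0\le q_0$, $a\ne q_0$) are the same as what you wrote.
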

\begin{proof}
By Proposition \ref{prop:weakLa} we find that also $T:\Ell^1(\R^d;X)\to \Ell^{a,\infty}(\R^d;Y)$ is bounded. From the Marcinkiewicz interpolation theorem (see \cite{HuntWeiss} for a formulation with explicit constants which extends to the vector valued setting), we deduce the required boundedness and estimate.
\end{proof}

Under other conditions on $K$ we can extrapolate to $p>p_0$:

\begin{corollary}[Extrapolation II, kernel condition]\label{cor:HormanderII}
Let $p_0,q_0,a\in [1, \infty)$ with $q_0\neq 1$
be such that $\frac{1}{p_0} - \frac{1}{q_0} = 1-\frac{1}{a}$. Let $K:\Rd\to\La(X,Y)$ be such that $K(\cdot)x$ and $K^{*}(\cdot)y$ are locally integrable on $\Rd$ for all $x\in X$ and $y^{*}\in Y^{*}$. Suppose that $K^{*}$ satisfies (H)$_a$. Let $T:\Ell^{p_0}(\Rd;X)\to \Ell^{q_0}(\Rd;Y)$ be a bounded linear operator of norm $B$ such that $T f = K*f$ for all $f\in \Ellinftyc(\Rd)\otimes X$.
Then, for all $(p,q)$ satisfying $q\in [q_0,\infty)$ and \eqref{eq:admissble}, $T:\Ellp(\Rd;X)\to \Ellq(\Rd;Y)$ is bounded and
\begin{align*}
\|T\|_{\La(\Ellp(\Rd;X),\Ellq(\Rd;Y))} \leq C_{p_0, q_0, q, d} (B+ C_{H,a}(K^*)),
\end{align*}
where $C_{p_0, q_0, p, d}\sim q$ as $q\uparrow \infty$.
\end{corollary}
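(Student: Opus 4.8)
The plan is to reduce the assertion to Corollary \ref{cor:Hormander} by passing to the adjoint of $T$. Since $q_{0}\in(1,\infty)$ (the hypotheses give $q_{0}\in[1,\infty)$ and $q_{0}\neq1$), the space $\Ell^{q_{0}'}(\Rd;Y^{*})$ embeds isometrically into $(\Ell^{q_{0}}(\Rd;Y))^{*}$, so the restriction of $T^{*}$ to $\Ell^{q_{0}'}(\Rd;Y^{*})$ is bounded into $(\Ell^{p_{0}}(\Rd;X))^{*}$ with norm $\leq B$. Set $L(t):=K(-t)^{*}\in\La(Y^{*},X^{*})$, so that $L(\cdot)y^{*}$ is locally integrable on $\Rd$ for every $y^{*}\in Y^{*}$. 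Fubini's theorem (applicable since all functions below are compactly supported and $K(\cdot)x$, $K^{*}(\cdot)y^{*}$ are locally integrable) gives, for $f\in\Ellinftyc(\Rd)\otimes X$ and $g\in\Ellinftyc(\Rd)\otimes Y^{*}$,
\[
\int_{\Rd}\langle Tf(s),g(s)\rangle\,\ud s=\int_{\Rd}\langle f(t),(L*g)(t)\rangle\,\ud t .
\]
Hence $T^{*}g=L*g$ on the dense subspace $\Ellinftyc(\Rd)\otimes Y^{*}$ of $\Ell^{q_{0}'}(\Rd;Y^{*})$. Since $L*g$ is a locally integrable $X^{*}$-valued function defining a functional on $\Ell^{p_{0}}(\Rd;X)$ of norm $\leq B\|g\|_{\Ell^{q_{0}'}(\Rd;Y^{*})}$, a standard argument shows it lies in $\Ell^{p_{0}'}(\Rd;X^{*})$ with the same norm bound; thus $T^{*}$ extends to a bounded operator $\Ell^{q_{0}'}(\Rd;Y^{*})\to\Ell^{p_{0}'}(\Rd;X^{*})$ of norm $\leq B$ satisfying \eqref{eq:TreprK} with kernel $L$.

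Next I would verify that $L$ inherits condition (H)$_{a}$ from $K^{*}$: substituting $s\mapsto-s$ in the defining integral for $L$ turns it into the one for $K^{*}$ with $t$ replaced by $-t$, and since (H)$_{a}$ for $K^{*}$ holds for every $t\in\Rd\setminus\{0\}$, this yields $C_{H,a}(L)\leq C_{H,a}(K^{*})$. Now apply Corollary \ref{cor:Hormander} to $T^{*}$ with $(q_{0}',p_{0}',a)$ in the role of $(p_{0},q_{0},a)$; this is legitimate since $q_{0}'\in(1,\infty)$, $p_{0}'\in(1,\infty]$, $a\in[1,\infty)$ and $\tfrac{1}{q_{0}'}-\tfrac{1}{p_{0}'}=\tfrac{1}{p_{0}}-\tfrac{1}{q_{0}}=1-\tfrac{1}{a}$. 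For $(p,q)$ as in the statement put $\widetilde{p}:=q'$ and $\widetilde{q}:=p'$; as $q_{0}\leq q<\infty$ we have $\widetilde{p}\in(1,q_{0}']$, and $\tfrac{1}{\widetilde{p}}-\tfrac{1}{\widetilde{q}}=\tfrac1p-\tfrac1q=1-\tfrac1a$. Corollary \ref{cor:Hormander} then gives that $T^{*}:\Ell^{q'}(\Rd;Y^{*})\to\Ell^{p'}(\Rd;X^{*})$ is bounded with norm at most $C_{q_{0}',p_{0}',q',d}\,(B+C_{H,a}(K^{*}))$, where $C_{q_{0}',p_{0}',q',d}\sim(q'-1)^{-1}=q-1\sim q$ as $q\uparrow\infty$.

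Finally I would dualize back. As $q<\infty$, $\Ell^{q'}(\Rd;Y^{*})$ is norming for $\Ell^{q}(\Rd;Y)$ and $\Ellinftyc(\Rd)\otimes Y^{*}$ is dense in it, while $1\leq p<\infty$ (this follows from the admissibility relation together with $q\geq q_{0}\geq a$), so $\Ellinftyc(\Rd)\otimes X$ is dense in $\Ell^{p}(\Rd;X)$. Hence for $f\in\Ellinftyc(\Rd)\otimes X$, testing $Tf$ against $g\in\Ellinftyc(\Rd)\otimes Y^{*}$ with $\|g\|_{\Ell^{q'}(\Rd;Y^{*})}\leq1$ and using the identity of the first paragraph together with $T^{*}g=L*g$ gives $\|Tf\|_{\Ell^{q}(\Rd;Y)}\leq C_{q_{0}',p_{0}',q',d}\,(B+C_{H,a}(K^{*}))\,\|f\|_{\Ell^{p}(\Rd;X)}$, so $T$ extends to a bounded operator $\Ell^{p}(\Rd;X)\to\Ell^{q}(\Rd;Y)$ with the claimed estimate and $C_{p_{0},q_{0},q,d}:=C_{q_{0}',p_{0}',q',d}\sim q$ as $q\uparrow\infty$. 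The whole proof is duality bookkeeping rather than a new idea; the main points requiring care are identifying $T^{*}$ with convolution by $L$, checking that (H)$_{a}$ survives the reflection $t\mapsto-t$, arguing that the a priori merely locally integrable function $L*g$ actually belongs to $\Ell^{p_{0}'}(\Rd;X^{*})$, and keeping in mind that $\Ell$-spaces of dual-valued functions need not be full duals — it is precisely the hypotheses $q_{0}\neq1$ and $q<\infty$ that make $\Ell^{q_{0}'}(\Rd;Y^{*})$ and $\Ell^{q'}(\Rd;Y^{*})$ behave as isometric, norming subspaces of the relevant dual spaces.
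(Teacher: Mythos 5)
Your proposal is correct and follows essentially the same route as the paper's own proof: you dualize, identify $T^{*}$ (restricted to $\Ellinftyc(\Rd)\otimes Y^{*}$) with the convolution operator $S g(t)=\int_{\Rd}K(s-t)^{*}g(s)\,\ud s=L*g(t)$, apply Corollary \ref{cor:Hormander} to this dual operator, and dualize back. The paper states this tersely; you correctly fill in the points it leaves implicit, namely that the reflection $u\mapsto -u$ preserves condition (H)$_{a}$ (so $C_{H,a}(L)\leq C_{H,a}(K^{*})$), that $L*g$ is a strongly measurable, locally integrable $X^{*}$-valued function actually lying in $\Ell^{p_{0}'}(\Rd;X^{*})$, and that the index bookkeeping $q'\in(1,q_{0}']$ corresponds exactly to $q\in[q_{0},\infty)$ with constant $\sim(q'-1)^{-1}\sim q$.
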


\begin{proof}
For $g\in \Ellinftyc(\Rd)\otimes Y^{*}$ and $t\in \Rd$, let $Sg(t)\in X^{*}$ be defined by
\begin{align*}
S g(t) := \int_{\Rd}  K(s-t)^* g(s)\,\ud s.
\end{align*}
One can check that
\begin{equation}\label{eq:dualityTS}
  \lb f, Sg \rb = \lb Tf, g\rb\quad (f\in \Ellinftyc(\Rd)\otimes X, g\in\Ellinftyc(\Rd)\otimes Y^{*}).
\end{equation}
Thus, by a density argument, $S$ extends to a bounded mapping from $\Ell^{q_0'}(\R^d;Y^*)$ into $\Ell^{p_0'}(\R^d;X^*)$ of norm $B$.
By Corollary \ref{cor:Hormander}, $S$ extends to a bounded mapping from $\Ell^{q'}(\Rd;Y^*)$ into $\Ell^{p'}(\Rd;X^*)$ for all $(p,q)$ satisfying $q'\in (1, q_0']$ and \eqref{eq:admissble}. The proof is concluded by using \eqref{eq:dualityTS} once again.
\end{proof}

\begin{remark}\label{rem:comparisonKernel} \
\begin{enumerate}[(i)]
\item The extrapolation result of H\"ormander \cite[Theorem 2.1]{Hormander60} was extended in \cite[Theorem 2]{BCP62} to the vector-valued setting in the case $p=q$. Moreover, they also considered extensions to $\Ell^p$ for vectors $p\in (1, \infty)^d$.
    It is sometimes overlooked that the kernels in \cite[Theorem 2]{BCP62} are assumed to be locally integrable on $\R^d$, hence in particular integrable at zero. This assumption only plays a role in the duality argument. See \cite[Theorem 3]{BCP62} for other possible conditions in the case where $X$ and $Y$ are Hilbert spaces.

\item To weaken the assumption of local integrability on $\R^d$ in the operator-valued setting in Proposition \ref{prop:weakLa}, Corollary \ref{cor:Hormander} and Corollary \ref{cor:HormanderII}, one can check that it suffices to assume $t\mapsto K(t)x$ and $t\mapsto K(t)y^*$ are locally integrable on $\R^d$ only for $x$ and $y^*$ in a dense subspace of $X$ and $Y^*$ respectively.

\item A slightly different presentation in the case $p=q\in [1, \infty]$ is given in \cite[Theorem V.3.4]{GarciaRubiobook}, where the condition \eqref{eq:Hormanderuniform} is used. The argument given there has the advantage that no duality arguments are required. To extrapolate to $p\in (p_0, \infty)$ one first proves that $T$ maps $\Ellinfty$ into BMO, after which an interpolation argument can be applied again.
\end{enumerate}
\end{remark}

\subsection{Multipliers and extrapolation\label{subsec:ExtraMult}}

In this section we use the extrapolation results from above to prove an extension of the extrapolation theorem for Fourier multipliers in \cite[Theorem 2.5]{Hormander60}.
Let $m:\Rd\setminus\{0\}\to\La(X,Y)$ be a strongly measurable map of moderate growth at zero and infinity.
For $r\in [1, \infty)$, $\varrho\in [1, \infty)$ and $n\in \N$, consider the following variants of the Mihlin--H\"ormander condition:
\begin{enumerate}
\item[(M1)$_{r,\varrho,n}$]
There exists a constant $M_1\geq 0$ such that for all multi-indices $|\alpha|\leq n$,
\begin{align*}
R^{|\alpha| + \frac{d}{r} - \frac{d}{\varrho}} \Big(\int_{R\leq |\xi|<2R} \|\partial^{\alpha} m(\xi)x\|^{\varrho} \, d\xi\Big)^{1/\varrho}\leq M_1\|x\|\qquad (x\in X, R>0).
\end{align*}
\item[(M2)$_{r,\varrho,n}$]
There exists a constant $M_2\geq 0$ such that for all multi-indices $|\alpha|\leq n$
\begin{align*}
R^{|\alpha| + \frac{d}{r} - \frac{d}{\varrho}} \Big(\int_{R\leq |\xi|<2R} \|\partial^{\alpha} m(\xi)^*y^*\|^{\varrho} \, d\xi\Big)^{1/\varrho}\leq M_2\|y^*\|\qquad (y^*\in Y^*,  R>0).
\end{align*}
\end{enumerate}
For $\varrho = 2$, $r=1$, $X = Y = \R$, condition (M1)$_{r,\varrho,n}$ reduces to the classical H\"ormander condition in \cite[Theorem 2.5]{Hormander60} (see also \cite[Theorem 5.2.7]{Grafakos08}).

Now we can now prove the main result of this section. The following theorem was already stated as \cite[Theorem 4.1]{Rozendaal-Veraar16Fourier} without proof, and extends \cite[Theorem 2.5]{Hormander60} to the vector-valued setting and to general exponents $p,q\in (1, \infty)$.

\begin{theorem}[Extrapolation, multiplier condition]\label{thm:extrapolmult}
Let $p_0,q_0,r\in [1, \infty]$ with $r\neq 1$ be such that $\frac{1}{p_0} - \frac{1}{q_0} = \frac{1}{r}$. Let $m:\Rd\setminus\{0\}\to\La(X,Y)$ be a strongly measurable map of moderate growth at zero and infinity. Suppose that $T_m:\Ell^{p_0}(\R^d;X) \to \Ell^{q_0}(\R^d;Y)$ is bounded of norm $B$.
\begin{enumerate}[$(1)$]
\item Suppose that $p_0\in (1, \infty]$, $Y$ has Fourier type $\varrho\in [1, 2]$ with $\varrho\leq r$, and (M1)$_{r, \varrho, n}$ holds for $n := \lfloor \frac{d}{\varrho} - \frac{d}{r} \rfloor+1$.
Then $T_{m}\in\La(\Ellp(\Rd;X),\Ellq(\Rd;Y))$ and
\begin{align}\label{eq:toprovebddextr}
\|T_{m}\|_{\La(\Ellp(\Rd;X),\Ellq(\Rd;Y))} \leq C_{p_{0},q_{0},p,d} (M_1 + B)
\end{align}
for all $(p,q)$ such that $p\in (1, p_0]$ and $\tfrac{1}{p}-\tfrac{1}{q}=\tfrac{1}{r}$, where $C_{p_0, q_0, p, d}\sim (p-1)^{-1}$ as $p\downarrow 1$.
\item Suppose that $q_0\in (1, \infty)$, $X$ has Fourier type $\varrho\in [1, 2]$ with $\varrho\leq r$, and (M2)$_{r, \varrho, n}$ holds for $n := \lfloor \frac{d}{\varrho} - \frac{d}{r} \rfloor+1$. Then $T_{m}\in\La(\Ellp(\Rd;X),\Ellq(\Rd;Y))$ and
\begin{align}\label{eq:toprovebddextr2}
\|T_m\|_{\calL(\Ellp(\R^d;X),\Ellq(\R^d;Y))} \leq C_{p_{0},q_{0},q,d} (M_2 + B),
\end{align}
for all $(p,q)$ satisfying $q\in [q_0,\infty)$ and $\tfrac{1}{p}-\tfrac{1}{q}=\tfrac{1}{r}$, where $C_{p_0, q_0, q, d}\sim q$ as $q\uparrow \infty$.
\end{enumerate}
\end{theorem}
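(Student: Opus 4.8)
The plan is to deduce both parts from the kernel extrapolation results of Subsection~\ref{subsec:ExtraKernel}; the heart of the matter is to pass from the symbol condition to H\"{o}rmander's kernel condition, which is the vector-valued and $p\leq q$ analogue of the argument in \cite{Hormander60}. Consider part~(1) first. Put $a:=r'$, so that $\frac{1}{p_0}-\frac{1}{q_0}=\frac1r=1-\frac1a$ and $a\in[1,\infty)$ because $r\neq1$. Since $m$ has moderate growth, $K:=\F^{-1}m$ is a well-defined element of $\Sw'(\Rd;\La(X,Y))$. I would show that: (i) away from the origin $K$ coincides with a function such that $t\mapsto K(t)x$ is locally integrable on $\Rd\setminus\{0\}$ for each $x\in X$, and \eqref{eq:TreprK} holds for $T_m$ on $L^{p_0}$; and (ii) $K$ satisfies condition (H)$_{r'}$ with $C_{H,r'}(K)\leq CM_1$ for some $C=C(d,\varrho,r)$. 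Given (i) and (ii), Corollary~\ref{cor:Hormander} applied with the triple $(p_0,q_0,a)=(p_0,q_0,r')$ immediately yields \eqref{eq:toprovebddextr}, with $C_{p_0,q_0,p,d}\sim(p-1)^{-1}$ as $p\downarrow1$.

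For~(ii) I would decompose $m=\sum_{k\in\Z}\psi_k m$ with the Littlewood--Paley functions $\psi_k$ of Section~\ref{Besov spaces} and put $K^{(k)}:=\F^{-1}(\psi_k m)$, which is smooth since $\psi_k m$ is compactly supported. The key step is to use the Fourier type $\varrho$ of $Y$ (through the corresponding Hausdorff--Young inequality, applied to $\F^{-1}$ of $\partial^{\gamma}(\xi^{\beta}\psi_k(\xi)m(\xi)x)$), together with the Leibniz rule and the bounds of (M1)$_{r,\varrho,n}$ on the $O(1)$ dyadic shells comprising the support of $\psi_k$, to obtain for all $x\in X$, $|\beta|\leq1$ and $|\gamma|\in\{0,n\}$ the estimate
\begin{align*}
\bigl\||\cdot|^{|\gamma|}\,\partial^{\beta}K^{(k)}(\cdot)x\bigr\|_{L^{\varrho'}(\Rd;Y)}\leq CM_1\,2^{-k(|\gamma|-|\beta|+\frac dr-\frac d\varrho)}\|x\|.
\end{align*}
Writing $\delta:=\frac d\varrho-\frac dr\geq0$ (this is where $\varrho\leq r$ is used) and splitting $\Rd$ into $\{|s|\leq2^{-k}\}$ and $\{|s|>2^{-k}\}$, on which one applies H\"{o}lder's inequality together with, respectively, the unweighted and the $|\cdot|^{n}$-weighted estimate above, one arrives --- using $\varrho'\geq r'$ and $n>\delta$, the latter being exactly why $n=\lfloor\frac d\varrho-\frac dr\rfloor+1$ is the right choice --- at
\begin{align*}
\bigl\|\partial^{\beta}K^{(k)}(\cdot)x\bigr\|_{L^{r'}(\{|s|>\rho\})}\leq CM_1\,2^{k|\beta|}\min\bigl(1,(2^{k}\rho)^{-(n-\delta)}\bigr)\|x\|\qquad(\rho>0).
\end{align*}

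With these in hand, for fixed $t\neq0$ and $x\in X$ I would estimate $\bigl\|(K^{(k)}(\cdot-t)-K^{(k)}(\cdot))x\bigr\|_{L^{r'}(\{|s|\geq2|t|\})}$ in two complementary ways: via the mean value theorem and the $\nabla K^{(k)}$-estimate (the case $|\beta|=1$), which gains a factor $2^{k}|t|$ and is efficient when $2^{k}|t|\leq1$; and via the triangle inequality and the $K^{(k)}$-estimate (the case $|\beta|=0$), which, using $\{|s|\geq2|t|\}\subseteq\{|s-\theta t|\geq|t|\}$ for $\theta\in[0,1]$, gains a factor $(2^{k}|t|)^{-(n-\delta)}$ and is efficient when $2^{k}|t|\geq1$. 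Summing the resulting geometric-type series over $k\in\Z$ (convergent because $n-\delta>0$) and using the triangle inequality in $L^{r'}$ to move the sum inside the norm yields $\bigl(\int_{|s|\geq2|t|}\|K(s-t)x-K(s)x\|^{r'}\,ds\bigr)^{1/r'}\leq CM_1\|x\|$, i.e.\ (H)$_{r'}$. Part~(i) is then routine: the same estimates show that $\sum_k K^{(k)}$ converges in $L^{r'}_{\mathrm{loc}}(\Rd\setminus\{0\})$, while $\sum_k\psi_k m=m$ in $\Sw'$ (moderate growth at zero handling the low frequencies), so $K$ has the required local integrability and the representation \eqref{eq:TreprK} follows as in the scalar case.

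Part~(2) follows from part~(1) by the duality used to derive Corollary~\ref{cor:HormanderII} from Corollary~\ref{cor:Hormander}: the adjoint $T_m^{*}$ is the Fourier multiplier operator with symbol $\xi\mapsto m(\xi)^{*}$, which maps $L^{q_0'}(\Rd;Y^{*})$ into $L^{p_0'}(\Rd;X^{*})$ with norm $B$, satisfies $\frac{1}{q_0'}-\frac{1}{p_0'}=\frac1r$, the space $X^{*}$ has Fourier type $\varrho$ because $X$ does, and (M2)$_{r,\varrho,n}$ for $m$ is exactly (M1)$_{r,\varrho,n}$ for the symbol of $T_m^{*}$. Applying part~(1) to $T_m^{*}$ and dualizing back --- writing $q=\tilde p'$, where $\tilde p\downarrow1$ corresponds to $q\uparrow\infty$ and $(\tilde p-1)^{-1}\sim q$ --- gives \eqref{eq:toprovebddextr2}. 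I expect the main obstacle to be the kernel estimate~(ii): one must read off the precise powers of $2^{k}$ and of $\rho$ from the Leibniz expansion and (M1)$_{r,\varrho,n}$ so that both the H\"{o}lder splitting at scale $\rho=2^{-k}$ and the summation over $k\in\Z$ converge, and it is exactly here that the hypotheses $\varrho\leq r$ and $n=\lfloor\frac d\varrho-\frac dr\rfloor+1$ enter essentially.
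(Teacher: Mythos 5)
Your proposal follows essentially the same route as the paper's proof: Littlewood--Paley decomposition of the kernel, Hausdorff--Young with the Fourier type $\varrho$ of $Y$ combined with the Leibniz rule and (M1)$_{r,\varrho,n}$ to get weighted $L^{\varrho'}$ bounds on $K^{(k)}(\cdot)x$ and $\nabla K^{(k)}(\cdot)x$, H\"older to pass from $L^{\varrho'}$ to $L^{r'}$ with polynomial decay, the split at scale $2^k|t|\sim 1$ (mean value theorem for small scales, tail decay for large scales) to verify (H)$_{r'}$, and then Corollary~\ref{cor:Hormander} for part (1) and duality as in Corollary~\ref{cor:HormanderII} for part (2). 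The only noteworthy difference is at the level of rigor in handling convergence: the paper works throughout with the truncated symbols $m^{(N)}=\sum_{|j|\leq N}\widehat{\zeta_j}\,m$ and kernels $K^{(N)}$, establishes the extrapolation bound with constants uniform in $N$, and passes to the limit via \cite[Lemma 3.1]{Rozendaal-Veraar16Fourier}; this sidesteps having to prove directly that $\sum_k K^{(k)}$ converges to a locally integrable representative of $K=\F^{-1}m$ away from the origin and that \eqref{eq:TreprK} holds for it, which you defer to ``routine'' and ``as in the scalar case.'' The paper also explicitly decreases $\varrho$ slightly so that $\varrho<r$ strictly, making the H\"older exponent finite; in your sketch the case $\varrho=r$ is a degenerate H\"older application that should be flagged. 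Finally, for the extension of the convolution identity to $\Ell^\infty_{\mathrm{c}}\otimes X$ when $p_0=q_0=\infty$ the paper uses a scalar-reduction/duality argument that your sketch omits; these are minor gaps in presentation rather than in the underlying argument.
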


As in \cite{Rozendaal-Veraar16Fourier}, we can deduce the following corollary from Theorem \ref{thm:extrapolmult}:

\begin{corollary}\label{cor:extrapol m uniform}
Let $p_0,q_0,r\in [1, \infty]$ with $q_0\neq1 $ and $r\neq 1$ be such that $\frac{1}{p_0} - \frac{1}{q_0} = \frac{1}{r}$.
Let $X$ and $Y$ both have Fourier type $\varrho\in [1, 2]$ $\varrho\leq r$ and let $n := \lfloor \frac{d}{\varrho} - \frac{d}{r} \rfloor+1$.
Let $m:\Rd\setminus\{0\}\to\La(X,Y)$ be such that, for all multi-indices $|\alpha|\leq n$,
\begin{align}\label{eq:mihlin}
\|\partial^{\alpha} m(\xi)\|\leq C|\xi|^{-|\alpha| - \frac{d}{r}}, \ \ \xi\in \R^d\setminus \{0\}.
\end{align}
Suppose that $T_m:\Ell^{p_0}(\Rd;X) \to \Ell^{q_0}(\Rd;Y)$ is bounded of norm $B$.
Then, for all exponents $p$ and $q$ satisfying $1<p\leq q<\infty$ and $\frac1p-\frac1q  = \frac1r$, $T_m:\Ellp(\R^d;X) \to \Ellq(\R^d;Y)$ is bounded and
\begin{align*}
\|T_m\|_{\La(\Ellp(\Rd;X),\Ellq(\Rd;Y))} \leq C_{p,q,d} (B+C)
\end{align*}
for some constant $C_{p,q,d}\geq 0$.
\end{corollary}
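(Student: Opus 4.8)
The plan is to deduce the corollary from Theorem~\ref{thm:extrapolmult} by a dichotomy according to whether the target exponent $p$ lies at most $p_0$ or strictly above $p_0$, after first checking that the pointwise bound \eqref{eq:mihlin} implies the integral conditions (M1)$_{r,\varrho,n}$ and (M2)$_{r,\varrho,n}$.

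\emph{Step 1.} For $|\alpha|\le n$ and $x\in X$, \eqref{eq:mihlin} gives $\|\partial^{\alpha}m(\xi)x\|\le C|\xi|^{-|\alpha|-d/r}\|x\|$, and since $\|\partial^{\alpha}(m(\xi)^{*})\|=\|(\partial^{\alpha}m(\xi))^{*}\|=\|\partial^{\alpha}m(\xi)\|$ one likewise has $\|\partial^{\alpha}m(\xi)^{*}y^{*}\|\le C|\xi|^{-|\alpha|-d/r}\|y^{*}\|$ for $y^{*}\in Y^{*}$. On the annulus $\{R\le|\xi|<2R\}$ one has $|\xi|\sim R$ and Lebesgue measure $\sim R^{d}$, so $\big(\int_{R\le|\xi|<2R}|\xi|^{-\varrho(|\alpha|+d/r)}\,d\xi\big)^{1/\varrho}\lesssim R^{-|\alpha|-d/r+d/\varrho}$; multiplying by $R^{|\alpha|+d/r-d/\varrho}$ leaves a bound independent of $R$. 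Hence (M1)$_{r,\varrho,n}$ and (M2)$_{r,\varrho,n}$ hold with $M_{1},M_{2}\le C'C$ for a constant $C'=C'(d,\varrho,r,n)$. Taking $\alpha=0$ also shows $m$ is of moderate growth at zero and infinity, so $T_m$ is well defined. This step is a routine computation.

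\emph{Step 2.} Fix $p,q$ with $1<p\le q<\infty$ and $\tfrac1p-\tfrac1q=\tfrac1r$; since $\tfrac1p-\tfrac1q=\tfrac1{p_0}-\tfrac1{q_0}$, we have $p\le p_0$ precisely when $q\le q_0$. If $p\le p_0$ then $p_0\ge p>1$, so $p_0\in(1,\infty]$; combining this with Step~1 and the Fourier type $\varrho\le r$ of $Y$, the hypotheses of Theorem~\ref{thm:extrapolmult}(1) are satisfied, and it yields $T_m\in\La(\Ellp(\Rd;X),\Ellq(\Rd;Y))$ with norm $\le C_{p_0,q_0,p,d}(M_1+B)$, where $C_{p_0,q_0,p,d}\sim(p-1)^{-1}$. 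If instead $p>p_0$ then $q>q_0$, and since $\tfrac1{q_0}=\tfrac1{p_0}-\tfrac1r$ and $q<\infty$ we get $q_0<q<\infty$; as $q_0>1$ by hypothesis, $q_0\in(1,\infty)$, so combining this with Step~1 and the Fourier type $\varrho\le r$ of $X$, Theorem~\ref{thm:extrapolmult}(2) applies and gives the boundedness of $T_m:\Ellp(\Rd;X)\to\Ellq(\Rd;Y)$ with norm $\le C_{p_0,q_0,q,d}(M_2+B)$, where $C_{p_0,q_0,q,d}\sim q$. In either case, absorbing $M_1,M_2\le C'C$, we obtain the claimed estimate with a constant $C_{p,q,d}$.

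\emph{Main obstacle.} There is no genuine obstacle; the substance is entirely contained in Theorem~\ref{thm:extrapolmult}, and this corollary is a bookkeeping argument. The only points requiring some care are: (i) checking that in each branch of the dichotomy the relevant finiteness hypothesis ($p_0\in(1,\infty]$ for part~(1), $q_0\in(1,\infty)$ for part~(2)) comes for free --- this uses $q_0\neq1$, $r\neq1$ and $1<p\le q<\infty$, which also force $q>r'$, so that the output ranges $q\in(r',q_0]$ of part~(1) and $q\in[q_0,\infty)$ of part~(2) together exhaust all admissible $q$; and (ii) the elementary implication \eqref{eq:mihlin}$\Rightarrow$(M1)$_{r,\varrho,n}$,(M2)$_{r,\varrho,n}$ of Step~1.
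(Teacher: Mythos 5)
Your proposal is correct and follows the route the paper intends: check that the pointwise Mihlin bound \eqref{eq:mihlin} implies both (M1)$_{r,\varrho,n}$ and (M2)$_{r,\varrho,n}$ by integrating over a dyadic annulus, and then split into $p\le p_0$ (so that $q\le q_0$) versus $p>p_0$ (so that $q>q_0$) and apply Theorem~\ref{thm:extrapolmult}(1) or \ref{thm:extrapolmult}(2) respectively, noting in each branch that the required finiteness conditions on $p_0$ resp.\ $q_0$ follow automatically from $1<p\le q<\infty$, $q_0\neq1$ and the identity $\frac1p-\frac1q=\frac1{p_0}-\frac1{q_0}$. The paper itself omits the proof (``As in \cite{Rozendaal-Veraar16Fourier}, we can deduce\dots''), and what you wrote is exactly the intended bookkeeping; your one implicit use, that $\partial^{\alpha}(m(\xi)^{*})=(\partial^{\alpha}m(\xi))^{*}$ so that \eqref{eq:mihlin} gives (M2) as well, is harmless since \eqref{eq:mihlin} already presupposes differentiability of $m$ in the operator norm topology and taking adjoints is a linear isometry.
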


Note that one can always take $\varrho= 1$ and $n= \lfloor \frac{d}{r'}\rfloor+1$ in the results above.

\begin{remark}
If $p_0 = q_0=1$, then the results above are true with $\varrho=1$. Indeed, one can repeat the proof of Theorem \ref{thm:extrapolmult} using the trivial Fourier type $1$ of $\calL(X,Y)$ and apply \cite[Theorem V.3.4]{GarciaRubiobook} (see Remark \ref{rem:comparisonKernel}).
\end{remark}

\begin{proof}[Proof of Theorem \ref{thm:extrapolmult}]
We follow the line of reasoning from \cite[Theorem 5.2.7]{Grafakos08}.

(1): \ By replacing $\varrho$ by a slightly smaller number if necessary we can assume that $\varrho<r$. Let $\eta\in\Sw(\Rd)$ be such that
\begin{align}\label{eq:definition eta}
\wh{\eta}(\xi)\in[0,1]\text{ for }\xi\in\Rd,\quad \wh{\eta}=1\text{ if }\abs{\xi}\leq 1,\quad \wh{\eta}=0\text{ if }\abs{\xi}\geq \tfrac{3}{2}.
\end{align}
Let $(\zeta_{j})_{j\in\Z}\subseteq\Sw(\Rd)$ be such that $\wh{\zeta_{0}}(\xi):=\wh{\eta}(\xi)-\wh{\eta}(2\xi)$ and $\wh{\zeta_{j}}(\xi):=\wh{\zeta_{0}}(2^{-j}\xi)$ for $\xi\in\Rd$ and $j\in\Z$. Then $\supp(\wh{\zeta_{j}})\subseteq\{\xi\in\Rd\mid \abs{\xi}\in[2^{j-1},\tfrac{3}{2}2^{j}]\}$ and
\begin{align*}
\sum_{j\in \Z} \wh{\zeta_{j}}(\xi)=  1\qquad (\xi\in\Rd\setminus\{0\}).
\end{align*}
Set $m_j (\xi) :=\widehat{\zeta_{j}}(\xi) m(\xi)$ for $j\in \Z$. Let $K_j:= \F^{-1} (m_j)\in L^\infty(\R^d;\calL(X,Y))$.
We fix $N\in \N$ and let $K^{(N)} = \sum_{j=-N}^N K_j$. Then ,
\begin{align}\label{eq:identityKm}
K^{(N)}*f = T_{m^{(N)}} f = T_m T_g f,    \ \ \ \text{for all} \  \ f\in C^\infty_c(\R^d)\otimes X,
\end{align}
where $m^{(N)} =\wh{g}^{(N)} m$ and $\wh{g}^{(N)} =  \sum_{j=-N}^N \wh{\zeta_{j}} = (\wh{\eta}(2^{-N-1}\xi)-\wh{\eta}(2^{N+1}\xi))$.
Since $\|g^{(N)}\|_{\Ell^1(\R^d)}\leq 2$, also
$\|K^{N}*f\|_{\Ell^{q_0}(\R^d;Y)}\leq 2 B \|f\|_{\Ell^{p_0}(\R^d;X)}$ for all $f\in C^\infty_c(\R^d)\otimes X$. We will prove that \eqref{eq:toprovebddextr} holds with $m$ replaced by $m^{(N)}$. Since $m^{(N)}(\xi)\to m(\xi)$ for almost all $\xi\in\Rd$ as $N\to\infty$, \cite[Lemma 3.1]{Rozendaal-Veraar16Fourier} would then conclude the proof of (1). For this we will check the conditions of Corollary \ref{cor:Hormander} with constants independent of $N$.

By the preceding discussion, from now on we may assume that $m = m^{(N)}$ and $K = K^{(N)}$. We first claim that \eqref{eq:identityKm} extends to all $f\in \Ell^\infty_c(\R^d)\otimes X$. This is clear if $p_0<\infty$ by a density argument. Next consider $p_0 = q_0 = \infty$. Using the Hahn-Banach theorem we can reduce to the scalar case. Fix $x\in X$ and $y^*\in Y$ and let $K_{x, y^*}(t) := \lb K(t)x, y^*\rb$ and $m_{x, y^*}(t) := \lb mx, y^*\rb$. Since $T_{m_{x, y^*}}$ is bounded on $\Ell^\infty(\R^d)$, by duality $T_{m_{x, y^*}}$ is also bounded on $\Ell^1(\R^d)$. Now we can apply the same density argument as before.

Let $\delta>0$ be a constant which is chosen suitably small below, and let $x\in X$. We claim that there exists a constant $C_{d}\geq0$ such that
\begin{align}
\sup_{j\in \Z} T_{1,j} &:=  \sup_{j\in \Z} \Big(\int_{\R^d} \|K_j(s)x\|^{r'} (1+2^j |s|)^{\delta} \, ds \Big)^{\frac{1}{r'}} \leq C_d M_1\|x\|_{X}, \label{eq:HormKern1}
\\ \sup_{j\in \Z} T_{2,j} &:= \sup_{j\in \Z} 2^{-j} \Big(\int_{\R^d} \|\nabla K_j(s)x\|^{r'} (1+2^j |s|)^{\delta} \, ds \Big)^{\frac{1}{r'}} \leq C_d M_1\|x\|_{X}.\label{eq:HormKern2}
\end{align}
Indeed, by H\"older's inequality, with $1 = \frac{r'}{\varrho'} + \frac{1}{b}$ for some $b\in[1,\infty)$,
\begin{align*}
T_{1,j}&=\Big(\int_{\R^d} \|K_j(s)x\|^{r'} (1+2^j |s|)^{\delta} \, ds \Big)^{\frac{1}{r'}}
\\ & \leq \Big(\int_{\R^d} \|K_j(s)x\|^{\varrho'}  (1+2^j |s|)^{\varrho' n}  \, ds \Big)^{\frac{1}{\varrho'}}
\Big(\int_{\R^d} (1+2^j |s|)^{(-n r' + \delta)b} \, ds \Big)^{\frac{1}{r'b}}
\\ & \leq C 2^{-jd(\frac{1}{r'}- \frac{1}{\varrho'})}  \Big(\int_{\R^d} \|K_j(s)x\|^{\varrho'}  (1+2^j |s|)^{\varrho' n}  \, ds \Big)^{\frac{1}{\varrho'}},
\end{align*}
since $(-nr' + \delta)b<-d$, or equivalently $n>  \frac{d}{\varrho}  - \frac{d}{r} + \frac{\delta}{r'}$, for $\delta>0$ small enough.
Writing $(1+2^j |s|)^{n}\leq C \sum_{|\gamma|\leq n} |(2^j s)^{\gamma}|$ and using the Fourier type $\varrho$ of $Y$, it follows that
\begin{align*}
T_{1,j}& \leq C 2^{-j(\frac{1}{\varrho}- \frac{1}{r})}  \sum_{|\gamma|\leq n} 2^{j|\gamma|} \Big(\int_{\R^d} \|s^{\gamma} K_j(s)x\|^{\varrho'}  \, ds \Big)^{\frac{1}{\varrho'}}
\\ & \leq C 2^{-j( \frac{1}{\varrho}- \frac{1}{r})}  \sum_{|\gamma|\leq n} 2^{j|\gamma|}
\Big(\int_{\R^d} \|\partial^{\gamma} m_j(\xi)x\|^{\varrho}  \, d\xi \Big)^{\frac{1}{\varrho}}.
\end{align*}
Using the Leibniz rule, the support condition of $\wh{\zeta_{j}}$ and the assumption (M1)$_{r,\varrho,n}$, as in \cite[Theorem 5.2.7]{Grafakos08} we find that
\begin{align*}
\Big(\int_{\R^d} \|\partial^{\gamma} m_j(\xi)x\|^{\varrho}  \, d\xi \Big)^{\frac{1}{\varrho}} \leq C M_1 2^{jd(\frac{1}{\varrho} - \frac{1}{r})}  2^{-j|\gamma|}\|x\|.
\end{align*}
Therefore, \eqref{eq:HormKern1} follows if we combine the estimates. The proof of \eqref{eq:HormKern2} is similar. The extra factor $2^{-j}$ cancels out because of the extra factor $|\xi|$ which comes from the Fourier transform of $\nabla K_j$.

It remains to check that $K$ satisfies (H)$_{r'}$. By the triangle inequality it suffices to prove that
\begin{align}\label{eq:sumjZa}
\sum_{j\in \Z} \Big(\int_{|s|\geq 2|t|} \|K_j(s-t)x - K_j(s)x\|^{r'} \, ds\Big)^{\frac{1}{r'}}\leq CM_1\|x\|_{X}\quad (t\neq 0, x\in X)
\end{align}
for a constant $C\geq 0$ independent of $m$.
Let $x\in X$ and $t\in \R^d\setminus\{0\}$, and choose $k\in \Z$ such that $2^{-k}\leq |t|\leq 2^{-k+1}$. Then, by  \eqref{eq:HormKern1}, for the part of the sum with $j>k$ we find
\begin{align*}
\sum_{j>k} \Big(\int_{|s|\geq 2|t|} &\|K_j(s-t)x - K_j(s)x\|^{r'} \, ds\Big)^{\frac{1}{r'}} \\ & \leq \sum_{j>k} 2\Big(\int_{|s|\geq |t|} \|K_j(s)x\|^{r'} \, ds\Big)^{\frac{1}{r'}}
\\ & \leq \sum_{j>k} 2\Big(\int_{|s|\geq |t|} \|K_j(s)x\|^{r'} \frac{(1+2^j |s|)^{\delta}}{(1+2^j |s|)^{\delta}} \, ds\Big)^{\frac{1}{r'}}
\\ & \leq \sum_{j>k}  \frac{2 C_d M_1 \|x\|}{(1+2^j |t|)^{\delta}}\leq  \sum_{j>k}  \frac{2 C_d M_1 \|x\|}{(1+2^{j-k})^{\delta}} = C M_1 \|x\|
\end{align*}
For the part with $j\leq k$, it follows from Minkowski's inequality and \eqref{eq:HormKern2} that
\begin{align*}
\sum_{j\leq k} \Big(\int_{|s|\geq 2|t|} &\|K_j(s-t)x - K_j(s)x\|^{r'} \, ds\Big)^{\frac{1}{r'}}
\\ & = \sum_{j\leq k} \Big(\int_{|s|\geq 2|t|} \Big\|\int_0^1 - t \cdot \nabla K_j(s-\theta t)x \, d\theta\Big\|^{r'} \, ds\Big)^{\frac{1}{r'}}
\\ & \leq \sum_{j\leq k} |t|  \int_0^1  \Big(\int_{\R^d}  \|\nabla K_j(s-\theta t)x\|^{r'} \, ds\Big)^{\frac{1}{r'}} \, d\theta
\\ & \leq \sum_{j\leq k} 2^{-k+1} \int_0^1  \Big(\int_{\R^d} \|\nabla K_j(s-\theta t)x\|^{r'} (1+2^j |s-\theta t|)^{\delta}   \, ds\Big)^{\frac{1}{r'}}\, d\theta
\\ & \leq \sum_{j\leq k} 2^{-k+1} 2^{j} C_d M_1\|x\| = CM_1\|x\|.
\end{align*}
Now Corollary \ref{cor:Hormander} concludes the proof.

(2):
Again, it suffices to prove \eqref{eq:toprovebddextr2} with $m$ replaced by $m^{(N)}$ with constants independent of $N$. So fix $N\in \N$ and write $m = m^{(N)}$ and $K = K^{(N)}$ again. It follows as in the proof of (1) that $K$ satisfies the hypotheses of Corollary \ref{cor:HormanderII}.
\end{proof}

\subsection{Applications\label{subsec:ExtraAppl}}

A straightforward application of Corollary \ref{cor:extrapol m uniform} is that under suitable smoothness conditions on $m$ one can extrapolate the result of Theorem \ref{thm:Lp-Lq multipliers type} (assuming $X$ has type $p_0$ and $Y$ has cotype $q_0$) to all values of $1<p\leq q<\infty$ with $\frac{1}{p} - \frac1q = \frac{1}{p_0}-\frac{1}{q_0}$.

Next we wish to extrapolate the result of Theorem \ref{main result homogeneous Besov multipliers type}. To this end, we first extrapolate Proposition \ref{compact Fourier support multipliers}, as it stands at the basis of our results on Besov spaces.

\begin{lemma}\label{lem:alt compact Fourier support multipliers}
Let $X$ be a Banach space with type $p_0\in[1,2]$ and $Y$ a Banach space with cotype $q_0\in[2,\infty]$, and let $r\in[1,\infty]$ be such that $\tfrac{1}{p_0}-\tfrac{1}{q_0}=\tfrac{1}{r}$. Let $m:\Rd\setminus\{0\}\to\La(X,Y)$ be an $X$-strongly measurable map such that $\{m(\xi)\!\mid\! \abs{\xi}\in [2^{k-2},2^{k+2}]\}\subseteq\calL(X,Y)$ is $\gamma$-bounded by $2^{-k\sigma}M$ for some $k\in\Z$, $\sigma\in\R$ and $M\geq0$.
Suppose that $X$ and $Y$ both have Fourier type $\varrho\in [1, 2]$ with $\varrho\leq r$, and let $n:=\lfloor d(\tfrac{1}{\varrho}-\tfrac{1}{r})\rfloor+1$.
Assume that, for a constant $C\geq 0$ and for all multi-indices $|\alpha|\leq n$,
\begin{equation}\label{eq:mihlin3}
\|\partial^{\alpha} m(\xi)\|_{\La(X,Y)}\leq C|\xi|^{-|\alpha| - \sigma}\qquad (\abs{\xi}\in [2^{k-2},2^{k+2}]).
\end{equation}
Then, for all $1<p\leq q<\infty$ such that $\frac1p-\frac1q=\tfrac{1}{r}$, the operator $T_m:\Sw_{J_k}(\Rd;X)\to\Sw'(\Rd;Y)$ extends to a bounded operator $\widetilde{T_{m}}\in\La(\Ell^{p}_{J_k}(\R^d;X),\Ell^{q}_{J_k}(\R^d;Y))$. Moreover,
\begin{align}\label{eq:boundedcompactsupp}
\|\widetilde{T_m}(f)\|_{\Ellq(\R^d;Y)}\leq C_{d,p,q} (C + M) 2^{kd/r} 2^{-k \sigma}  \,\|f\|_{\Ellp(\R^d;X)}
\end{align}
for all $f\in\Ell^{p}_{J_k}(\R^d;X)$ and some $C_{d,p,q}\geq 0$.
\end{lemma}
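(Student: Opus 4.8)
The plan is to reduce, by a dilation, to a fixed dyadic annulus on which $m$ is controlled, cut $m$ off there to obtain a globally defined symbol satisfying a scale‑invariant Mihlin condition, feed this into the extrapolation result Corollary~\ref{cor:extrapol m uniform}, and then transfer the resulting $\Ellp$--$\Ellq$ estimate back by undoing the dilation.

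First I note that the assertion is vacuous unless $r>1$, since any pair with $1<p\leq q<\infty$ and $\tfrac1p-\tfrac1q=\tfrac1r$ forces $\tfrac1r<1$. Hence we may assume $r\in(1,\infty]$, and in particular Corollary~\ref{cor:extrapol m uniform} is applicable (it requires only $q_{0}\neq1$, which holds as $q_{0}\geq2$, and $r\neq1$). I would then fix once and for all a cutoff $\chi\in\Ccinf(\Rd)$ with $0\leq\chi\leq1$, $\chi\equiv1$ on $\{\tfrac12\leq|\xi|\leq2\}$ and $\supp(\chi)\subseteq\{2^{-2}<|\xi|<2^{2}\}$, and set $m_{k}(\xi):=m(2^{k}\xi)$ and $\widetilde m:=\chi\,m_{k}$ (extended by $0$ outside $\supp\chi$). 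Three properties of $\widetilde m$ need to be verified, all routine. (i) The set $\{\widetilde m(\xi)\mid\xi\in\Rd\}$ is $\gamma$‑bounded by $2^{-k\sigma}M$: indeed $\{m_{k}(\xi)\mid|\xi|\in[2^{-2},2^{2}]\}=\{m(\eta)\mid|\eta|\in[2^{k-2},2^{k+2}]\}$ is $\gamma$‑bounded by $2^{-k\sigma}M$ by hypothesis, and the Kahane contraction principle \eqref{Kahane contraction principle} handles the multiplication by $\chi$ and the value $0$. (ii) $\widetilde m$ satisfies the Mihlin bound \eqref{eq:mihlin} with constant $\lesssim C\,2^{-k\sigma}$: this follows from the Leibniz rule, the bounds $\|\partial^{\gamma}(\chi)\|_{\infty}\lesssim1$, the rescaled form of \eqref{eq:mihlin3}, namely $\|\partial^{\gamma}m_{k}(\xi)\|\leq C\,2^{-k\sigma}|\xi|^{-|\gamma|-\sigma}$ for $|\xi|\in[2^{-2},2^{2}]$, and the observation that, $\widetilde m$ being supported where $|\xi|\sim1$, the weights $|\xi|^{-\sigma}$ and $|\xi|^{-d/r}$ are there comparable. (iii) $T_{\widetilde m}$ is bounded $\Ell^{p_{0}}(\Rd;X)\to\Ell^{q_{0}}(\Rd;Y)$ with norm $\lesssim2^{-k\sigma}M$: I would obtain this from Proposition~\ref{compact Fourier support multipliers} applied to $\widetilde m$ on the cube $[-5,5]^{d}$, after inserting a fixed $\rho\in\Sw(\Rd)$ with $\wh\rho\equiv1$ on $\{|\xi|\leq4\}$ and $\supp(\wh\rho)\subseteq[-5,5]^{d}$, using that $T_{\widetilde m}f=T_{\widetilde m}(\rho\ast f)$ with $\rho\ast f\in\Sw_{[-5,5]^{d}}(\Rd;X)$, Young's inequality, and density of $\Sw(\Rd;X)$ in $\Ell^{p_{0}}(\Rd;X)$.

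With (i)--(iii) in hand, Corollary~\ref{cor:extrapol m uniform} applies to $\widetilde m$ with the given $p_{0},q_{0},r$, the common Fourier type $\varrho\leq r$ of $X$ and $Y$, and $n=\lfloor d(\tfrac1\varrho-\tfrac1r)\rfloor+1$, and yields
\[
\|T_{\widetilde m}\|_{\La(\Ellp(\Rd;X),\Ellq(\Rd;Y))}\leq C_{d,p,q}\,(C+M)\,2^{-k\sigma}
\]
for every $1<p\leq q<\infty$ with $\tfrac1p-\tfrac1q=\tfrac1r$. Finally I would undo the dilation: for $h\in\Sw_{J_{0}}(\Rd;X)$ the function $f:=h(2^{k}\cdot)$ lies in $\Sw_{J_{k}}(\Rd;X)$, $\widetilde m$ agrees with $m_{k}$ on $\supp\wh h\subseteq J_{0}$, and a short computation gives $T_{m}f=(T_{\widetilde m}h)(2^{k}\cdot)$; tracking $\Ellp$‑ and $\Ellq$‑norms under this dilation produces precisely the factor $2^{kd/p-kd/q}=2^{kd/r}$, which gives \eqref{eq:boundedcompactsupp} for all $f\in\Sw_{J_{k}}(\Rd;X)$. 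Since $h\mapsto h(2^{k}\cdot)$ maps $\Sw_{J_{0}}(\Rd;X)$ onto $\Sw_{J_{k}}(\Rd;X)$, which is dense in $\Ell^{p}_{J_{k}}(\Rd;X)$ by Lemma~\ref{lem:approxsmooth} (the interior of $J_{k}$ has the segment property), $T_{m}$ extends uniquely to the asserted $\widetilde{T_{m}}\in\La(\Ell^{p}_{J_{k}}(\Rd;X),\Ell^{q}_{J_{k}}(\Rd;Y))$, its range landing in $\Ell^{q}_{J_{k}}(\Rd;Y)$ because having Fourier support in $J_{k}$ is a closed condition.

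The step I expect to be the main obstacle is (ii) together with the surrounding bookkeeping: one must check that the cut‑off symbol $\widetilde m$ obeys a genuinely \emph{global} Mihlin condition with the correct decay exponent $d/r$ (rather than the given $\sigma$) and with a constant scaling cleanly as $2^{-k\sigma}$, uniformly in $k$. The dilation reduction is exactly what makes this tractable: it moves all relevant frequencies to $|\xi|\sim1$, where the mismatch between the exponents $\sigma$ and $d/r$ is harmless, and it isolates the $k$‑dependence into the two explicit factors $2^{-k\sigma}$ (from the $\gamma$‑bound and the derivative estimates) and $2^{kd/r}$ (from the $\Ellp$--$\Ellq$ rescaling).
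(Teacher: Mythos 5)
Your proposal is correct and uses the same pipeline as the paper: truncate the symbol to a neighbourhood of the relevant dyadic annulus, verify a global Mihlin bound, get the base $\Ell^{p_0}$--$\Ell^{q_0}$ bound from Proposition~\ref{compact Fourier support multipliers}, extrapolate with Corollary~\ref{cor:extrapol m uniform}, and specialize to $f\in\Sw_{J_k}(\Rd;X)$. The only structural difference is that the paper performs the truncation and the Proposition~\ref{compact Fourier support multipliers} estimate directly at scale $2^{k}$ (taking $m_{k}=\zeta_{k}m$ with $\zeta_{k}=\zeta(2^{-k}\cdot)$ and using the cube $[-2^{k+2},2^{k+2}]^{d}$), whereas you first conjugate by the dilation $\xi\mapsto 2^{k}\xi$ so that all the work happens on a fixed unit-scale annulus, and then transfer the estimate back by undoing the dilation. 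Your reorganization has the minor advantage of making the $k$-dependence (the factors $2^{-k\sigma}$ and $2^{kd/r}$) fully explicit and of keeping the Leibniz/Mihlin bookkeeping at $|\xi|\sim1$; the paper's version avoids the dilation step but is terser about the conversion from the exponent $\sigma$ in \eqref{eq:mihlin3} to the exponent $d/r$ in \eqref{eq:mihlin}, which on the support of $\zeta_{k}$ costs exactly the comparable factor $|\xi|^{d/r-\sigma}\sim2^{k(d/r-\sigma)}$ that you track explicitly. Your remarks that the statement is vacuous when $r=1$ and that membership in $\Ell^{q}_{J_k}$ is preserved under $L^q$-limits are both correct and compatible with what the paper implicitly uses.
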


\begin{proof}
Fix $\zeta\in \Sw(\R^d)$ such that $\zeta(\xi) =1$ if $\abs{\xi}\in [\tfrac{1}{2},2]$ and $\zeta(\xi) = 0$ if $\abs{\xi}\notin [\tfrac{1}{4},4]$. Let $\zeta_k(\xi) := \zeta(2^{-k} \xi)$.
If we set $m_k := \zeta_k m$ then clearly $T_{m_k} f = T_m f$ for $f\in \Sw_{J_{k}}(\R^d;X)$.
Using Leibniz's rule one may check that $m_k$ still satisfies \eqref{eq:mihlin3} with a bound independent of $k$. Note that $\check{\zeta_{k}}\ast f\in\Sw_{[-2^{k+2},2^{k+2}]^{d}}(\Rd;X)$ for each $f\in\Sw(\Rd;X)$. By Proposition \ref{compact Fourier support multipliers},
\begin{align*}
\|T_{m_k}(f)\|_{\Ell^{q_0}(\R^d;Y)} & = \|T_{m}(\check{\zeta_k}\ast f)\|_{\Ell^{q_0}(\R^d;Y)}
\\ & \leq \tau_{p_0,X}c_{q_0,Y}2^{(k+3)d/r}2^{-k\sigma}M\|f\|_{\Ell^{p_0}(\R^d;X)},
\end{align*}
hence $T_{m_k}$ extends to a bounded linear operator from $\Ell^{p_0}(\R^d;X)$ into $\Ell^{q_0}(\R^d;Y)$. By Corollary \ref{cor:extrapol m uniform} \eqref{eq:boundedcompactsupp} holds with $T_{m}$ replaced by $T_{m_k}$ (even without the support condition on $f$). Specializing to $f\in \Sw_{J_{k}}(\R^d;X)$ and using that $T_{m_k} f = T_m f$, the required result follows from Lemma \ref{lem:approxsmooth}.
\end{proof}

Now we can extrapolate Theorem \ref{main result homogeneous Besov multipliers type} to other values of $p$ and $q$:

\begin{theorem}\label{main result homogeneous Besov multipliers typeextra}
Assume the conditions of Theorem \ref{main result homogeneous Besov multipliers type} and suppose that $X$ and $Y$ both have Fourier type $\varrho\in [1, 2]$ with $\varrho\leq r$. Let $n:=\lfloor d(\tfrac{1}{\varrho}-\tfrac{1}{r})\rfloor+1$. Assume that, for a constant $C\geq 0$ and for all $k\in\Z$ and all multi-indices $|\alpha|\leq n$,
\begin{equation}\label{eq:mihlin4}
\|\partial^{\alpha} m(\xi)\|_{\La(X,Y)}\leq C|\xi|^{-|\alpha| - \sigma}\qquad (\abs{\xi}\in [2^{k-2},2^{k+2}]).
\end{equation}
Then, for all $1<\tilde{p}\leq \tilde{q}<\infty$ satisfying $\frac{1}{\tilde{p}}-\frac{1}{\tilde{q}}=\frac{1}{r}$, the operator $T_m$ is bounded from $\dot{\Be}^{s}_{\tilde{p},v}(\Rd;X)$ to $\dot{\Be}^{s+\sigma-d/r}_{\tilde{q},w}(\Rd;Y)$.
\end{theorem}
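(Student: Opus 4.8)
The plan is to mimic the proof of Theorem \ref{main result homogeneous Besov multipliers type}: reduce the statement, via Proposition \ref{prop:multipliers_on_homogeneous_Besov_spaces} applied with the integrability pair $(p,q)$ there replaced by $(\tilde{p},\tilde{q})$, to the block estimate \eqref{general homogeneous estimate on intervals} on the dyadic annuli $J_{k}$. The only new ingredient is that on each block this estimate is now supplied by the extrapolated bound of Lemma \ref{lem:alt compact Fourier support multipliers} rather than by Proposition \ref{compact Fourier support multipliers}.

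First I would observe that $m$ is of moderate growth at zero and infinity --- this is part of the hypotheses inherited from Theorem \ref{main result homogeneous Besov multipliers type}, and it also follows directly from \eqref{eq:mihlin4} with $\alpha=0$, since then $\norm{m(\xi)}_{\La(X,Y)}\leq C\abs{\xi}^{-\sigma}$ --- so that $\dot{T}_{m}:\dot{\Sw}(\Rd;X)\to\Sw'(\Rd;Y)$ is well defined. Fix $1<\tilde{p}\leq\tilde{q}<\infty$ with $\tfrac{1}{\tilde{p}}-\tfrac{1}{\tilde{q}}=\tfrac{1}{r}$. For each $k\in\Z$ one has $\{\xi\mid\abs{\xi}\in[2^{k-2},2^{k+2}]\}=J_{k-1}\cup J_{k}\cup J_{k+1}$, so by the subadditivity of the $\gamma$-bound under finite unions of operator families,
\begin{equation*}
\gamma(\{m(\xi)\mid\abs{\xi}\in[2^{k-2},2^{k+2}]\})\leq\sum_{\abs{j-k}\leq 1}\gamma(\{m(\xi)\mid\xi\in J_{j}\})=:2^{-k\sigma}M_{k},
\end{equation*}
and a shift argument gives $\norm{(M_{k})_{k}}_{\ell^{u}(\Z)}\leq 3\cdot 2^{\abs{\sigma}}\,\norm{(2^{j\sigma}\gamma(\{m(\xi)\mid\xi\in J_{j}\}))_{j}}_{\ell^{u}(\Z)}<\infty$. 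Moreover, restricting \eqref{eq:mihlin4} to $\abs{\xi}\in[2^{k-2},2^{k+2}]$ is precisely hypothesis \eqref{eq:mihlin3} of Lemma \ref{lem:alt compact Fourier support multipliers} at scale $k$, with a constant $C$ not depending on $k$. Hence Lemma \ref{lem:alt compact Fourier support multipliers} applies at every scale $k$ and yields the unique bounded extension $\dot{T}_{m}^{(k)}\in\La(\Ell^{\tilde{p}}_{J_{k}}(\Rd;X),\Ell^{\tilde{q}}_{J_{k}}(\Rd;Y))$ of $\dot{T}_{m}\restriction_{\Sw_{J_{k}}(\Rd;X)}$ with
\begin{equation*}
\norm{\dot{T}_{m}^{(k)}(f)}_{\Ell^{\tilde{q}}(\Rd;Y)}\leq C_{d,\tilde{p},\tilde{q}}\,(C+M_{k})\,2^{kd/r}2^{-k\sigma}\,\norm{f}_{\Ell^{\tilde{p}}(\Rd;X)}\qquad(f\in\Sw_{J_{k}}(\Rd;X)),
\end{equation*}
that is, \eqref{general homogeneous estimate on intervals} holds with $\beta:=\tfrac{d}{r}-\sigma$ and $c_{k}:=C_{d,\tilde{p},\tilde{q}}(C+M_{k})$.

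Now I would feed this into Proposition \ref{prop:multipliers_on_homogeneous_Besov_spaces} with the above $\beta$ and $(c_{k})_{k}$. Because $s-\beta=s+\sigma-\tfrac{d}{r}$, this yields an extension $\overline{T_{m}}\in\La(\dot{\Be}^{s}_{\tilde{p},v}(\Rd;X),\dot{\Be}^{s+\sigma-d/r}_{\tilde{q},w}(\Rd;Y))$ of $\dot{T}_{m}$ for every $s\in\R$ and every pair $(v,w)$ compatible with the $\ell$-summability of $(c_{k})_{k}$; note that the Mihlin term $C$ is bounded but need not decay in $k$, so $(c_{k})_{k}$ is controlled in $\ell^{u}(\Z)$ only up to an $\ell^{\infty}(\Z)$ summand, which is what determines the admissible range of the fine indices $v,w$. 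Finally, since $\dot{\Sw}(\Rd;X)$ is dense in $\dot{\Be}^{s}_{\tilde{p},v}(\Rd;X)$ for $v<\infty$, the extension is the unique bounded one and may be identified with $T_{m}$; for $v=\infty$ one simply keeps $\overline{T_{m}}$, as in Proposition \ref{prop:multipliers_on_homogeneous_Besov_spaces}.

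The analytic heart of the matter has already been packaged into Lemma \ref{lem:alt compact Fourier support multipliers} (and behind it Corollary \ref{cor:extrapol m uniform}), so the work here is bookkeeping rather than new estimates. The point I would be most careful about is that the constants produced by the scale-$k$ extrapolation are genuinely uniform in $k$ --- this is why Lemma \ref{lem:alt compact Fourier support multipliers} was stated with a scale-invariant constant $C_{d,\tilde{p},\tilde{q}}$ and with the $2^{kd/r}2^{-k\sigma}$ homogeneity factor made explicit --- and that the $\gamma$-boundedness hypothesis over the annuli $J_{k}$ is correctly traded for the $\gamma$-boundedness hypothesis of Lemma \ref{lem:alt compact Fourier support multipliers} over the fattened shells $\{\abs{\xi}\in[2^{k-2},2^{k+2}]\}$, while keeping track of the summability of the resulting multiplier constants $(c_{k})_{k}$.
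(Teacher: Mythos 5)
Your proof is correct and takes the same route as the paper, whose own proof is the single sentence ``This is immediate from Lemma \ref{lem:alt compact Fourier support multipliers} and Proposition \ref{prop:multipliers_on_homogeneous_Besov_spaces}.'' You have simply spelled out the bookkeeping: decompose the fattened shell $\{\abs{\xi}\in[2^{k-2},2^{k+2}]\}$ as $J_{k-1}\cup J_k\cup J_{k+1}$, control the $\gamma$-bound $M_k$ by the $\ell^u$-hypothesis of Theorem~\ref{main result homogeneous Besov multipliers type}, apply Lemma~\ref{lem:alt compact Fourier support multipliers} at each scale to obtain \eqref{general homogeneous estimate on intervals} with $\beta=\tfrac{d}{r}-\sigma$ and $c_k\approx C+M_k$, and feed this into Proposition~\ref{prop:multipliers_on_homogeneous_Besov_spaces}.

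You have also noticed something the paper's terse proof glosses over: since the Mihlin constant $C$ in \eqref{eq:mihlin4} is uniform in $k$, the sequence $(c_k)_{k\in\Z}$ obtained from Lemma~\ref{lem:alt compact Fourier support multipliers} contains an $\ell^\infty(\Z)$ summand, so it lies in $\ell^u(\Z)$ only when $u=\infty$. Proposition~\ref{prop:multipliers_on_homogeneous_Besov_spaces} therefore yields the conclusion only for $\tfrac{1}{w}\leq\tfrac{1}{v}$, not for the full range $\tfrac{1}{w}\leq\tfrac{1}{u}+\tfrac{1}{v}$ of Theorem~\ref{main result homogeneous Besov multipliers type}. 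The theorem statement leaves $s,v,w$ unquantified, and this is the admissible range that the argument actually delivers; making this explicit, as you do, is a genuine improvement in precision over the source.
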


\begin{proof}
This is immediate from Lemma \ref{lem:alt compact Fourier support multipliers} and Proposition \ref{prop:multipliers_on_homogeneous_Besov_spaces}.
\end{proof}

\begin{remark}
Lemma \ref{lem:alt compact Fourier support multipliers} also holds with $J_k$ (see \eqref{eq:dyadicJ}) replaced by $I_k$ (see \eqref{dyadic annuli}) if instead of \eqref{eq:mihlin3} one assumes for $k=0$ that $\{m(\xi)\mid \abs{\xi}\in[0,4]\}\subseteq\La(X,Y)$ is $\gamma$-bounded and that, for all $|\alpha|\leq n$,
\[
\|\partial^{\alpha} m(\xi)\|_{\La(X,Y)}\leq C(1+|\xi|)^{-|\alpha| - \frac{d}{r}}\qquad (\xi\in \R^d) \qquad (\abs{\xi}\in [0,4]).
\]
Hence under this additional assumption Theorem \ref{main result Besov multipliers type} also extrapolates to all $p$ and $q$ as above.
\end{remark}

\noindent
{\bf Acknowledgment} The authors would like to thank the referee for his/her comments.


\begin{thebibliography}{10}

\bibitem{AK06}
F.~Albiac and N.~J. Kalton.
\newblock {\em Topics in {B}anach space theory}, volume 233 of {\em Graduate
  Texts in Mathematics}.
\newblock Springer, New York, 2006.

\bibitem{Amann97}
H.~Amann.
\newblock Operator-valued {F}ourier multipliers, vector-valued {B}esov spaces,
  and applications.
\newblock {\em Math. Nachr.}, 186:5--56, 1997.

\bibitem{Arendt-Bu02}
W.~Arendt and S.~Bu.
\newblock The operator-valued {M}arcinkiewicz multiplier theorem and maximal
  regularity.
\newblock {\em Math. Z.}, 240(2):311--343, 2002.

\bibitem{Arendt-Bu-period}
W.~Arendt and S.~Bu.
\newblock Operator-valued {F}ourier multipliers on periodic {B}esov spaces and
  applications.
\newblock {\em Proc. Edinb. Math. Soc. (2)}, 47(1):15--33, 2004.

\bibitem{BCP62}
A.~Benedek, A.-P. Calder{\'o}n, and R.~Panzone.
\newblock Convolution operators on {B}anach space valued functions.
\newblock {\em Proc. Nat. Acad. Sci. U.S.A.}, 48:356--365, 1962.

\bibitem{Bergh-Lofstrom}
J.~Bergh and J.~L{\"o}fstr{\"o}m.
\newblock {\em Interpolation spaces. {A}n introduction}.
\newblock Springer-Verlag, Berlin, 1976.
\newblock Grundlehren der Mathematischen Wissenschaften, No. 223.

\bibitem{Bu-Kim05a}
S.~Bu and J.M. Kim.
\newblock Operator-valued {F}ourier multiplier theorems on {T}riebel spaces.
\newblock {\em Acta Math. Sci. Ser. B Engl. Ed.}, 25(4):599--609, 2005.

\bibitem{Bu-Kim05b}
S.~Bu and J.M. Kim.
\newblock Operator-valued {F}ourier multipliers on periodic {T}riebel spaces.
\newblock {\em Acta Math. Sin. (Engl. Ser.)}, 21(5):1049--1056, 2005.

\bibitem{Bu3}
D.~L. Burkholder.
\newblock Martingales and singular integrals in {B}anach spaces.
\newblock In {\em Handbook of the geometry of Banach spaces, Vol. I}, pages
  233--269. North-Holland, Amsterdam, 2001.

\bibitem{DJT95}
J.~Diestel, H.~Jarchow, and A.~Tonge.
\newblock {\em Absolutely summing operators}, volume~43 of {\em Cambridge
  Studies in Advanced Mathematics}.
\newblock Cambridge University Press, Cambridge, 1995.

\bibitem{GarciaRubiobook}
J.~Garc{\'{\i}}a-Cuerva and J.L. Rubio~de Francia.
\newblock {\em Weighted norm inequalities and related topics}, volume 116 of
  {\em North-Holland Mathematics Studies}.
\newblock North-Holland Publishing Co., Amsterdam, 1985.
\newblock Notas de Matem{\'a}tica [Mathematical Notes], 104.

\bibitem{Girardi-Weis03}
M.~Girardi and L.~Weis.
\newblock Operator-valued {F}ourier multiplier theorems on {B}esov spaces.
\newblock {\em Math. Nachr.}, 251:34--51, 2003.

\bibitem{Grafakos08}
L.~Grafakos.
\newblock {\em Classical {F}ourier analysis}, volume 249 of {\em Graduate Texts
  in Mathematics}.
\newblock Springer, New York, second edition, 2008.

\bibitem{Grafakos09}
L.~Grafakos.
\newblock {\em Modern {F}ourier analysis}, volume 250 of {\em Graduate Texts in
  Mathematics}.
\newblock Springer, New York, second edition, 2009.

\bibitem{Haase-Rozendaal16}
M.~{Haase} and J.~{Rozendaal}.
\newblock {Functional calculus on real interpolation spaces for generators of
  $C_{0}$-groups.}
\newblock {\em {Math. Nachr.}}, 289(2-3):275--289, 2016.

\bibitem{Hormander60}
L.~H{\"o}rmander.
\newblock Estimates for translation invariant operators in {$L^{p}$}\ spaces.
\newblock {\em Acta Math.}, 104:93--140, 1960.

\bibitem{HuntWeiss}
R.A. Hunt and G.~Weiss.
\newblock The {M}arcinkiewicz interpolation theorem.
\newblock {\em Proc. Amer. Math. Soc.}, 15:996--998, 1964.

\bibitem{Hytonen04}
T.~Hyt{\"o}nen.
\newblock Fourier embeddings and {M}ihlin-type multiplier theorems.
\newblock {\em Math. Nachr.}, 274/275:74--103, 2004.

\bibitem{HNVW1}
T.~Hyt\"{o}nen, J.M.A.M. van Neerven, M.C. Veraar, and L.~Weis.
\newblock {\em Analysis in Banach Spaces. Volume~I:~Martingales and
  Littlewood--Paley Theory}.
\newblock Ergebnisse der Mathematik und ihrer Grenzgebiete, Springer-Verlag,
  2016.

\bibitem{HNVW2}
T.~Hyt\"{o}nen, J.M.A.M. van Neerven, M.C. Veraar, and L.~Weis.
\newblock {\em Analysis in Banach Spaces. Volume~II:~Probabilistic Methods and
  Operator Theory}.

\bibitem{KvNV08}
N.~Kalton, J.~van Neerven, M.~Veraar, and L.~Weis.
\newblock Embedding vector-valued {B}esov spaces into spaces of
  {$\gamma$}-radonifying operators.
\newblock {\em Math. Nachr.}, 281(2):238--252, 2008.

\bibitem{Kalton-Weis04}
N.J. Kalton and L.W. Weis.
\newblock {The $H^{\infty}$-calculus and square function estimates}.
\newblock In {\em Nigel J. Kalton Selecta, Volume 1}, pages 715--764. Springer,
  2016.

\bibitem{Kunstmann-Weis04}
P.~C. Kunstmann and L.~Weis.
\newblock Maximal {$L_p$}-{R}egularity for {P}arabolic {E}quations, {F}ourier
  {M}ultiplier {T}heorems and {$H^\infty$}-functional {C}alculus.
\newblock In {\em Functional Analytic Methods for Evolution Equations (Levico
  Terme 2001)}, volume 1855 of {\em Lecture Notes in Math.}, pages 65--312.
  Springer, Berlin, 2004.

\bibitem{KVW14}
S.~Kwapie{\'n}, M.~Veraar, and L.~Weis.
\newblock {$R$}-boundedness versus $\gamma$-boundedness.
\newblock {\em Ark. Mat.}, 54(1):125--145, 2016.

\bibitem{LT}
M.~Ledoux and M.~Talagrand.
\newblock {\em Probability in {B}anach spaces}, volume~23 of {\em Ergebnisse
  der Mathematik und ihrer Grenzgebiete (3) [Results in Mathematics and Related
  Areas (3)]}.
\newblock Springer-Verlag, Berlin, 1991.
\newblock Isoperimetry and processes.

\bibitem{PieWen}
A.~Pietsch and J.~Wenzel.
\newblock {\em Orthonormal systems and {B}anach space geometry}, volume~70 of
  {\em Encyclopedia of Mathematics and its Applications}.
\newblock Cambridge University Press, Cambridge, 1998.

\bibitem{Rozendaal15}
J.~Rozendaal.
\newblock Functional calculus for ${C}_{0}$-groups using (co)type.
\newblock Online at \url{http://arxiv.org/abs/1508.02036}, 2015.

\bibitem{Rozendaal-Veraar16Fourier}
J.~Rozendaal and M.~Veraar.
\newblock Fourier multiplier theorems involving type and cotype.
\newblock Online at \url{http://arxiv.org/abs/1605.09340}, 2016.

\bibitem{Rozendaal-Veraar16Stability}
J.~Rozendaal and M.~Veraar.
\newblock Stability theory using $(\mathrm{L}^{p},\mathrm{L}^{q})$-{F}ourier
  multipliers.
\newblock In preparation, 2016.

\bibitem{RF86}
J.~L. Rubio~de Francia.
\newblock Martingale and integral transforms of {B}anach space valued
  functions.
\newblock In {\em Probability and Banach spaces (Zaragoza, 1985)}, volume 1221
  of {\em Lecture Notes in Math.}, pages 195--222. Springer, Berlin, 1986.

\bibitem{Shahmurov10}
R.~Shahmurov.
\newblock On integral operators with operator-valued kernels.
\newblock {\em J. Inequal. Appl.}, page~12, 2010.

\bibitem{Triebel10}
H.~Triebel.
\newblock {\em Theory of function spaces}.
\newblock Modern Birkh\"auser Classics. Birkh\"auser/Springer Basel AG, Basel,
  2010.
\newblock Reprint of 1983 edition.

\bibitem{vanNeerven10}
J.~van Neerven.
\newblock {$\gamma$}-radonifying operators---a survey.
\newblock In {\em The {AMSI}-{ANU} {W}orkshop on {S}pectral {T}heory and
  {H}armonic {A}nalysis}, volume~44 of {\em Proc. Centre Math. Appl. Austral.
  Nat. Univ.}, pages 1--61. Austral. Nat. Univ., Canberra, 2010.

\bibitem{Veraar13}
M.~{Veraar}.
\newblock {Embedding results for $\gamma$-spaces.}
\newblock In {\em {Recent trends in analysis. Proceedings of the conference in
  honor of Nikolai Nikolski on the occasion of his 70th birthday, Bordeaux,
  France, August 31 -- September 2, 2011}}, pages 209--219. Bucharest: The
  Theta Foundation, 2013.

\bibitem{Weis97}
L.~Weis.
\newblock Stability theorems for semi-groups via multiplier theorems.
\newblock In {\em Differential equations, asymptotic analysis, and mathematical
  physics ({P}otsdam, 1996)}, volume 100 of {\em Math. Res.}, pages 407--411.
  Akademie Verlag, Berlin, 1997.

\end{thebibliography}
\end{document}